\numberwithin{equation}{section}
\begin{document}

\newtheorem{theorem}{Theorem}[section]
\newtheorem{lemma}[theorem]{Lemma}
\newtheorem{proposition}[theorem]{Proposition}
\newtheorem{corollary}[theorem]{Corollary}

\theoremstyle{definition}
\newtheorem{definition}[theorem]{Definition}
\newtheorem{example}[theorem]{Example}

\theoremstyle{remark}
\newtheorem{remark}[theorem]{Remark}
\newtheorem*{ack}{Acknowledgments}

\newenvironment{magarray}[1]
{\renewcommand\arraystretch{#1}}
{\renewcommand\arraystretch{1}}

\newcommand{\mapor}[1]{{\stackrel{#1}{\longrightarrow}}}
\newcommand{\mapin}[1]{{\stackrel{#1}{\hookrightarrow}}}
\newcommand{\mapver}[1]{\Big\downarrow\vcenter{\rlap{$\scriptstyle#1$}}}

\newcommand{\liminv}{\smash{\mathop{\lim}\limits_{\leftarrow}\,}}

\newcommand{\Set}{\mathbf{Set}}
\newcommand{\Art}{\mathbf{Art}}
\newcommand{\Grpd}{\mathbf{Grpd}}

\renewcommand{\bar}{\overline}
\newcommand{\de}{\partial}
\newcommand{\debar}{{\overline{\partial}}}
\newcommand{\per}{\!\cdot\!}

\newcommand{\Oh}{\mathcal{O}}
\newcommand{\sA}{\mathcal{A}}
\newcommand{\sB}{\mathcal{B}}
\newcommand{\sC}{\mathcal{C}}
\newcommand{\sD}{\mathcal{D}}
\newcommand{\sE}{\mathcal{E}}
\newcommand{\sF}{\mathcal{F}}
\newcommand{\sG}{\mathcal{G}}
\newcommand{\sH}{\mathcal{H}}
\newcommand{\sI}{\mathcal{I}}
\newcommand{\sJ}{\mathcal{J}}
\newcommand{\sK}{\mathcal{K}}
\newcommand{\sL}{\mathcal{L}}
\newcommand{\sM}{\mathcal{M}}
\newcommand{\sO}{\mathcal{O}}
\newcommand{\sP}{\mathcal{P}}
\newcommand{\sU}{\mathcal{U}}
\newcommand{\sV}{\mathcal{V}}
\newcommand{\sX}{\mathcal{X}}
\newcommand{\sY}{\mathcal{Y}}
\newcommand{\sN}{\mathcal{N}}
\newcommand{\sZ}{\mathcal{Z}}

\newcommand{\Aut}{\operatorname{Aut}}
\newcommand{\Mor}{\operatorname{Mor}}
\newcommand{\Def}{\operatorname{Def}}
\newcommand{\Hom}{\operatorname{Hom}}
\newcommand{\Hilb}{\operatorname{Hilb}}
\newcommand{\HOM}{\operatorname{\mathcal H}\!\!om}
\newcommand{\DER}{\operatorname{\mathcal D}\!er}
\newcommand{\Spec}{\operatorname{Spec}}
\newcommand{\Der}{\operatorname{Der}}
\newcommand{\End}{\operatorname{End}}
\newcommand{\END}{\operatorname{\mathcal E}\!\!nd}
\newcommand{\Image}{\operatorname{Im}}
\newcommand{\coker}{\operatorname{coker}}
\newcommand{\tot}{\operatorname{tot}}
\newcommand{\Diff}{\operatorname{Diff}}
\newcommand{\Del}{\operatorname{Del}}
\newcommand{\Tot}{\operatorname{Tot}}
\newcommand{\MC}{\operatorname{MC}}
\newcommand{\Coder}{\operatorname{Coder}}
\newcommand{\id}{\operatorname{id}}
\newcommand{\ad}{\operatorname{ad}}

\renewcommand{\Hat}[1]{\widehat{#1}}
\newcommand{\dual}{^{\vee}}
\newcommand{\desude}[2]{\dfrac{\de #1}{\de #2}}
\newcommand{\A}{\mathbb{A}}
\newcommand{\N}{\mathbb{N}}
\newcommand{\R}{\mathbb{R}}
\newcommand{\Z}{\mathbb{Z}}
\renewcommand{\H}{\mathbb{H}}
\renewcommand{\L}{\mathbb{L}}
\newcommand{\proj}{\mathbb{P}}
\newcommand{\K}{\mathbb{K}\,}
\newcommand\C{\mathbb{C}}
\newcommand\T{\mathbb{T}}
\newcommand{\contr}{{\mspace{1mu}\lrcorner\mspace{1.5mu}}}

\newcommand{\bi}{\boldsymbol{i}}
\newcommand{\bl}{\boldsymbol{l}}


\title{On coisotropic deformations of  holomorphic submanifolds}
\author{Ruggero Bandiera}
\address{\newline
Universit\`a degli studi di Roma La Sapienza,\hfill\newline
Dipartimento di Matematica \lq\lq Guido
Castelnuovo\rq\rq,\hfill\newline
P.le Aldo Moro 5,
I-00185 Roma, Italy.}
\email{bandiera@mat.uniroma1.it}

\author{Marco Manetti}
\email{manetti@mat.uniroma1.it}
\urladdr{www.mat.uniroma1.it/people/manetti/}

\date{May 3, 2013}

\begin{abstract}  We describe the differential graded Lie algebras governing Poisson 
deformations of a holomorphic Poisson manifold and coisotropic embedded deformations 
of a coisotropic holomorphic submanifold. In both cases, under some mild additional assumption, we show that the infinitesimal first order deformations induced by the anchor map are unobstructed.
Applications include the analog of Kodaira stability 
theorem for coisotropic deformation and a generalization of McLean-Voisin's theorem about 
the local moduli space of lagrangian submanifold. Finally it is shown that our construction 
is homotopy equivalent to the  homotopy Lie algebroid, in the cases where this is defined.
\end{abstract}

\subjclass[2010]{18G55, 13D10, 53D17}

\maketitle

\section{Introduction}

The classical notion of coisotropic submanifold of a symplectic manifold extends immediately to the  setup of Poisson geometry. 
More precisely,  a closed submanifold $Z\subset X$ is called coisotropic if the ideal of $Z$ is stable under the Poisson bracket.
In recent years coisotropic submanifolds, and their cohomology, have received a lot of attention in view of their importance in mathematics and physics (see e.g. \cite{CF04}).
The definition of coisotropic submanifolds extends literally to the complex holomorphic case and more generally to every  algebraic Poisson variety over a field. 

The goal of this paper is to study deformation theory of holomorphic coisotropic submanifold. The existing approach to deformations of differentiable coisotropic submanifold, based on the notion of  ``homotopy Lie algebroid'' \cite{CS08,OP05},
does not extend immediately to the holomorphic case since it relies on the identification of a 
neigbourhood with the total space of the normal bundle. 

Here we follows a different approach, based on  descent theory for the  Deligne groupoid associated 
to a sheaf of differential graded Lie algebras. Our approach is, in great part, purely algebraic and 
then most of the  results of this paper also works for algebraic manifolds over a field of characteristic 0.

Given a holomorphic Poisson bivector $\pi$ on a holomorphic manifold $X$, the 
Lichnerowicz-Poisson differential $d_{\pi}=[\pi,\cdot]_{SN}$ is a square zero operator on the sheaf of holomorphic polyvector fields on $X$. Then a closed submanifold $Z\subset X$ is coisotropic  if and only if $d_{\pi}$ factors to a differential on the exterior algebra of the normal sheaf $\sN_{Z|X}$ of $Z$ in $X$.

In the first part of the paper we show that the functor $\Hilb_{Z|X}^{co}$ of infinitesimal embedded coisotropic deformations of $Z$ in $X$ is governed by a differential graded Lie algebra $K$, explicitly described, having cohomology isomorphic to the hypercohomology of the complex of sheaves over $Z$:
\[ {\bigwedge}^{\ge 1}\sN_{Z|X}:\qquad
\sN_{Z|X}\xrightarrow{d_{\pi}}{\bigwedge}^2 \sN_{Z|X}\xrightarrow{d_{\pi}}\cdots\]
where ${\bigwedge}^i \sN_{Z|X}$ is considered in degree $i$.
In particular, the space of  infinitesimal first order deformations is isomorphic to 
\[ H^1(K)=\H^1({\bigwedge}^{\ge 1}\sN_{Z|X})=\ker(d_{\pi}\colon H^0(Z,\sN_{Z|X})\to 
H^0(Z,{\bigwedge}^2\sN_{Z|X}))\]
and there exists a complete obstruction theory with values in 
$H^2(K)=\H^2({\bigwedge}^{\ge 1}\sN_{Z|X})$. As a byproduct we also obtained the explicit description of two differential graded Lie algebra governing respectively the deformations of the pair $(X,\pi)$, i.e., the deformations of $X$ as a Poisson manifold and the deformations of the triple 
$(X,Z,\pi)$.

In the second part we consider the effect of the anchor 
map $\pi^{\#}\colon \Omega^*_X\to \bigwedge^*\Theta_X$ on 
deformations of coisotropic submanifolds: by definition  
$\pi^{\#}$ is the unique morphism of sheaves of (graded) 
$\Oh_{X}$-algebras 
such that $ \pi^{\#}(df)=d_{\pi}(f)$ for every $f\in \Oh_X$.
Very recently,  N. Hitchin \cite{hitchin} has proved that, 
if $X$ is a compact K\"{a}hler Poisson manifold,
then  every element in the image of 
$\pi^{\#}\colon H^1(X,\Omega_X^1)\to  H^1(X,\Theta_X)$ is the Kodaira-Spencer class of a 
deformation of the pair $(X,\pi)$ over a germ of smooth curve. 
Here we prove a similar statement for embedded coisotropic deformations: given a coisotropic submanifold
$Z\subset X$, the anchor map factors to a morphism of sheaves of (graded) $\Oh_Z$-algebras
$\pi^{\#}\colon \Omega_Z^*\to \bigwedge^*\sN_{Z|X}$. If the Hodge to de Rham spectral sequence of $Z$ degenerate at $E_1$, then 
every element in the 
image of $\pi^{\#}\colon H^0(Z,\Omega_Z^1)\to H^0(Z,\sN_{Z|X})$ is 
the Kodaira-Spencer class of a coisotropic embedded
deformation of $Z$ in  $(X,\pi)$ over a  germ of smooth curve.
Such result applied to a compact K\"{a}hler 
Lagrangian submanifold $Z$ of a holomorphic symplectic manifold $X$ 
shows that every small deformation in $X$ of $Z$ is Lagrangian 
and the Hilbert scheme of $X$ is smooth at 
$Z$; when $X$ is compact K\"{a}hler we recover in this way a 
classical result by  Voisin and McLean~\cite{voisin,mclean}.
 
The underlying idea of proof, borrowed from \cite{FMpoisson}, is to show that the 
anchor map is equivalent, in the homotopy category of differential graded Lie algebras, 
to a morphism $\pi^{\#}\colon J\to K$, where the cohomology of $J$ is isomorphic to  
the hypercohomology of 
the complex of sheaves on $Z$ 
\[ \Omega_Z^{\ge 1}:\qquad
\Omega_Z^1\xrightarrow{d}\Omega_Z^2\xrightarrow{d}\cdots \]
where $\Omega^i_Z $ is considered in degree $i$. The formality criterion of \cite{FMpoisson} 
applies and, whenever the Hodge to de Rham spectral sequence of $Z$ degenerates at $E_1$, the differential graded Lie algebra $J$ is homotopy abelian, hence governing unobstructed deformations.

In the last part of the paper we compare our construction with the homotopy Lie algebroid of 
Oh, Park, Cattaneo and Felder \cite{CF07,OP05} in the situation where the latter may be defined. 
As expected,  the two constructions are homotopy equivalent and then, according to the general principles 
of derived deformation theory, they define the 
same deformation problem, both classical and extended.
It is worth to mention here that our construction provides the basic data 
for the local study of the extended moduli space of 
coisotropic submanifolds, extending  the  Lagrangian case carried out by Merkulov 
\cite{merkulov}.

\begin{ack} The authors thank Domenico Fiorenza and Rita Pardini for useful discussions on the subject of this paper.
\end{ack}

\bigskip
\section{Review of Deligne groupoids and totalization}
\label{sec.Deligne and totalization}

Denote by $\Set$  the category of sets (in a fixed universe) and
by $\mathbf{Grpd}$ the category of  groupoids; we shall consider, in the obvious way,
$\Set$ as a full subcategory of $\mathbf{Grpd}$.

Given a field $\K$ we shall denote by  $\Art_{\K}$  the category of local Artin
$\K$-algebras with residue field $\K$. Unless otherwise specified,
for every  object   $A\in \mathbf{Art}_{\K}$, we denote by
$\mathfrak{m}_A$ its maximal ideal.

In order to simplify the terminology,
by a formal pointed groupoid  we shall mean a covariant functor
$\sF\colon\Art_{\K} \to \mathbf{Grpd}$ such that $\sF(\K)=*$ is the one-point set. Similarly a formal 
pointed set is a functor $F\colon\Art_{\K}\to \mathbf{Set}$ such that $F(\K)=*$, also called a functor of Artin rings.
A morphism of formal pointed groupoid $\eta\colon \sF\to \sG$ is called an equivalence if $\sF(A)\to \sG(A)$ is an equivalence of groupoids for every $A\in \Art_{\K}$.

It is a nowadays standard to consider every deformation problem over a field of characteristic 0
as controlled by a differential graded Lie algebra (DGLA);
we refer to the existing literature and in particular to
\cite{GoMil1,K,ManettiSeattle} for the definition and main properties
of differential graded Lie algebras, $L_{\infty}$-algebras, Maurer-Cartan equation and gauge action.
Later in this paper we also need to work with   
$L_{\infty}[1]$-algebras, i.e., desuspension of $L_{\infty}$-algebras: we refer to    
\cite{FZ12} for a nice and clear introduction to these structures.

The Deligne groupoid of a differential graded Lie algebra $L$ over a field $\K$ of characteristic 0 is the formal pointed groupoid
\[ \Del_{L}\colon \Art_{\K}\to \mathbf{Grpd}\]
defined in the following way \cite{DtM,GoMil1}:
given $A\in \Art_{\K}$ the objects of $\Del_{L}(A)$ are the solutions of the Maurer-Cartan equation in $L\otimes\mathfrak{m}_A$:
\[ \text{Objects}(\Del_{L}(A))=\left\{x\in L^1\otimes\mathfrak{m}_A\;\middle|\; 
dx+\dfrac{1}{2}[x,x]=0\right\}.\]
Given two objects $x,y$ of $\Del_{L}(A)$, the morphisms between them are
\[ \Mor_{\Del_{L}(A)}(x,y)=\{e^a\in \exp(L^0\otimes \mathfrak{m}_A)\mid e^a\ast x=y\},\]
where $\ast$ is the gauge action:
\[
e^a \ast x:=x+\sum_{n\geq 0} \frac{ [a,-]^n}{(n+1)!}([a,x]-da)\;.
\]

The deformation functor associated to a differential graded Lie algebra $L$ is the $\pi_0$ of the Deligne groupoid:
\[ \Def_{L}\colon \Art_{\K}\to \mathbf{Set},\qquad
\Def_L(A)=\pi_0(\Del_{L}(A)).
\]
The tangent space $T^1\Def_{L}$ of the functor $\Def_{L}$ is isomorphic to the cohomology group $H^1(L)$.
The homotopy invariance of $\Del$ and $\Def$ is summarized by the following result.

\begin{theorem}\label{thm.homotopyinvariance}
Let $L\to M$ be a quasi-isomorphism of differential graded Lie algebras. Then:
\begin{enumerate}

\item the induced natural transformation $\Def_{L}\to \Def_M$ is an isomorphism;

\item if $L$ and $M$ are positively graded, i.e., $L^i=M^i=0$ for every $i<0$, then
 the induced natural transformation $\Del_{L}\to \Del_M$ is an equivalence.
\end{enumerate}
\end{theorem}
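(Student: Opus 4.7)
The plan is to prove both statements by the standard Goldman--Millson style argument, reducing the global comparison to a small-extension induction and invoking the quasi-isomorphism hypothesis degree by degree.

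To begin, I would reduce to a more tractable form of quasi-isomorphism. Factoring $f\colon L\to M$ through its mapping cylinder, or equivalently using a factorization into a trivial cofibration followed by a trivial fibration, it suffices to treat the two cases in which $f$ is an injective quasi-isomorphism with acyclic cokernel, or a surjective quasi-isomorphism with acyclic kernel. In the surjective case with acyclic kernel $K$, the whole theorem reduces to showing that $\Del_{K}(A)=*$ for every $A\in\Art_{\K}$ (and, for part (2), that this equivalence is compatible with the hom-sets). In the injective case a dual statement holds. The splitting into these two cases is standard and separates the two difficulties: controlling Maurer--Cartan classes and controlling gauge symmetries.

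For part (1), I would invoke the classical tangent-obstruction criterion: a natural transformation of formal pointed deformation functors which induces an isomorphism on tangent spaces and an injection on a complete obstruction space is an isomorphism. Proceeding by induction on $\dim_{\K}A$ via small extensions $0\to I\to A\to B\to 0$ with $\mathfrak{m}_{A}\cdot I=0$, the obstruction to lifting an MC class of $L\otimes \mathfrak{m}_B$ lies in $H^{2}(L)\otimes I$, and its image under $f$ is the obstruction of the lifted class in $M$. Since $f$ is a quasi-isomorphism, the induced maps $H^{1}(L)\to H^{1}(M)$ and $H^{2}(L)\to H^{2}(M)$ are isomorphisms; this gives simultaneously surjectivity (obstructions lift) and injectivity (gauge equivalence descends) of $\Def_{L}(A)\to \Def_{M}(A)$ at every inductive step, hence on the nose.

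For part (2) essential surjectivity of $\Del_{L}(A)\to\Del_{M}(A)$ is already supplied by (1); the new content is full faithfulness. Here I would use the twist construction: for each MC element $x\in L^{1}\otimes\mathfrak{m}_{A}$, the graded Lie algebra $L$ with twisted differential $d+[x,\cdot]$ is a DGLA $L^{x}$ (still positively graded), and $f$ induces a quasi-isomorphism $L^{x}\to M^{f(x)}$. Translating the gauge equation along the twist reduces the question to comparing stabilizers of the zero MC element, i.e.\ comparing the sets $\{a\in L^{0}\otimes\mathfrak{m}_{A}\mid e^{a}\ast_{x}0=0\}$ in $L^{x}$ and $M^{f(x)}$. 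Expanding the gauge action and working modulo successive powers of $\mathfrak{m}_{A}$, the first-order obstruction to the equality of two gauges lies in $H^{0}(L^{x})\cong H^{0}(L)$, and the ambiguity in lifting such a gauge lies in $H^{-1}(L^{x})\cong H^{-1}(L)$; the assumption $L^{i}=M^{i}=0$ for $i<0$ ensures $H^{-1}=0$ on both sides, while the quasi-isomorphism identifies the $H^{0}$'s. An inductive argument along small extensions then shows that the morphism sets match bijectively.

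The main obstacle I anticipate is precisely the analysis of morphism sets in part (2): the gauge action is non-linear and $\ast_{x}$ depends on $x$, so one must simultaneously track how the inductive step interacts with the twist. The positivity hypothesis is used in an essential way to kill the $H^{-1}$ contribution that would otherwise produce extra automorphisms incompatible with the quasi-isomorphism, and it is exactly here that the second statement becomes stronger than the first.
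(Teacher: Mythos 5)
The paper does not actually prove this theorem: it cites Goldman--Millson for item (2) and Kontsevich, Manetti (reduced Deligne groupoids, extended deformation functors) for item (1). Your part (2) sketch is essentially the Goldman--Millson argument (induction on small extensions, twisting by a Maurer--Cartan element, comparison of stabilizers of $0$, with positivity killing the degree $-1$ contribution), and in outline it is fine. The problem is part (1), which is stated \emph{without} any positivity hypothesis and is precisely the statement that does not follow from the argument you give.

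The concrete gap is the ``classical tangent-obstruction criterion'' you invoke: it is not true that a morphism of deformation functors which is bijective on tangent spaces and injective on obstruction spaces is an isomorphism. That criterion yields smoothness (hence surjectivity of $\Def_L(A)\to\Def_M(A)$), but the injectivity step breaks down exactly where you assert ``gauge equivalence descends''. In the inductive step over a small extension $0\to I\to A\to B\to 0$, the set of liftings of a class in $\Def_L(B)$ is acted on \emph{transitively} by $H^1(L)\otimes I$, but this action is in general not free: its stabilizers are governed by the gauge transformations $e^a$, $a\in L^0\otimes\mathfrak{m}_A$, stabilizing the given Maurer--Cartan element modulo $I$ (Goldman--Millson's ``irrelevant stabilizers'', which involve $L^0$ and $L^{-1}$). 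If $f(x)$ and $f(y)$ are gauge equivalent in $M$ with $x\equiv y \pmod{I}$, the difference $x-y$ gives a class in $H^1(L)\otimes I$ killed by the $M$-stabilizer of $f(x)$, and nothing in an $H^1/H^2$ count forces it to be killed by the $L$-stabilizer of $x$. A quick way to see the criterion itself is false: the quotient functor $G(A)=\mathfrak{m}_A/\mathfrak{m}_A^2$ is a deformation functor, the projection $F(A)=\mathfrak{m}_A\to G(A)$ is bijective on tangent spaces and both functors are unobstructed, yet the map is not injective. Handling these stabilizers in arbitrary degrees is exactly why the published proofs of item (1) pass through $L_\infty$ minimal models, reduced Deligne groupoids, or extended deformation functors rather than the naive induction. (Your preliminary reduction to the case of a surjection with acyclic kernel has a related weakness: concluding from $\Def_K=\ast$ that $\Def_L\to\Def_M$ is bijective on $\pi_0$ requires a long exact sequence of homotopy groups for the Deligne--Getzler--Hinich construction, not just the groupoid-level statement.) Under the positivity hypothesis of item (2) the offending terms vanish and your strategy does work --- but then you have only proved item (1) in the special case where it is subsumed by item (2).
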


\begin{proof} The second item is one of the main results of \cite{GoMil1}.
The first item is proved in \cite{K} via homotopy classification of $L_{\infty}$-algebras,
in \cite{Kont94,ManettiDGLA} via reduced Deligne groupoid and in \cite{EDF} via extended deformation functors.
\end{proof}

Recall that a differential graded Lie algebra is called homotopy abelian if it is
quasi-isomorphic to an abelian DGLA. Equivalently,  a DGLA $L$ is homotopy abelian if
there exists a diagram of differential graded Lie algebras
\[ L\xleftarrow{\;f\;}M\xrightarrow{\;g\;} H\]
where $f$ and $g$ are quasi-isomorphisms and $H$ is abelian, i.e., the bracket of $H$ is trivial.

The Theorem~\ref{thm.homotopyinvariance}
immediately implies that the functor $\Def_L$ is unobstructed whenever $L$ is homotopy abelian.

\begin{proposition}\label{prop.injective in cohomology}
 Let $f\colon L\to M$ be a morphism of differential graded Lie algebras:
\begin{enumerate}

\item If $f$ is injective in cohomology and
$M$ is  homotopy abelian,
then also $L$ is homotopy abelian;

\item If $f$ is surjective in cohomology and
$L$ is  homotopy abelian,
then also $M$ is homotopy abelian.
\end{enumerate}
\end{proposition}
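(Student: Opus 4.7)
The plan is to transfer the problem to minimal $L_\infty$-models on cohomology and exploit the $L_\infty$-morphism equations there. Recall that (Kadeishvili homotopy transfer) every DGLA $A$ admits a minimal $L_\infty$-structure $\{l_n\}$ on $H(A)$, unique up to $L_\infty$-isomorphism and $L_\infty$-quasi-isomorphic to $A$; that $A$ is homotopy abelian iff some (equivalently, every) minimal model has trivial higher brackets $l_n=0$ for $n\geq 2$; and that any DGLA morphism $f\colon L\to M$ lifts to an $L_\infty$-morphism $\phi=\{\phi_n\}\colon H(L)\to H(M)$ between the minimal models, with linear component $\phi_1=H(f)$. These facts are standard and may be cited from \cite{K,ManettiSeattle,FZ12}.

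For part (1), choose the minimal model of $M$ so that $m_n=0$ for all $n\geq 2$; since $l_1=m_1=0$ by minimality, the right-hand side of the $n$-th $L_\infty$-morphism relation vanishes and the relation collapses to
\[
\sum_{\substack{k+r=n+1\\ k\geq 2}}\ \pm\,\phi_r\bigl(l_k(x_{I_1}),x_{I_2}\bigr)\;=\;0.
\]
I would then proceed by induction on $n\geq 2$: for $n=2$ the equation reads $\phi_1(l_2(x,y))=0$, and injectivity of $\phi_1=H(f)$ forces $l_2=0$. Once $l_2=\cdots=l_{n-1}=0$ has been established, every summand with $k<n$ is zero, so the relation reduces to $\phi_1(l_n(x_1,\ldots,x_n))=0$, and injectivity gives $l_n=0$. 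Hence the minimal model of $L$ is abelian, so $L$ is homotopy abelian.

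Part (2) is dual. Choose the minimal model of $L$ with $l_n=0$ for $n\geq 2$; now the left-hand side of each $L_\infty$-morphism relation vanishes, leaving
\[
\sum_{\substack{n_1+\cdots+n_k=n\\ k\geq 2}}\ \pm\, m_k\bigl(\phi_{n_1}(x_{I_1}),\ldots,\phi_{n_k}(x_{I_k})\bigr)\;=\;0.
\]
Again I induct on $n$: the case $n=2$ yields $m_2(\phi_1(x),\phi_1(y))=0$, and surjectivity of $\phi_1=H(f)$ (so that elements $\phi_1(y)$ span $H(M)$) gives $m_2=0$. Assuming $m_2=\cdots=m_{n-1}=0$, only the $k=n$ summand survives and the relation becomes $m_n(\phi_1(x_1),\ldots,\phi_1(x_n))=0$; surjectivity then forces $m_n=0$.

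The main obstacle is really invoking the correct homotopical input, namely the existence of minimal $L_\infty$-models, the characterization of homotopy abelianity in terms of vanishing of the transferred brackets, and the lifting of $f$ to an $L_\infty$-morphism with controlled linear part. Once these are in hand, the two parallel inductions on the $L_\infty$-morphism equations are essentially formal bookkeeping, and the symmetry between injectivity/surjectivity in cohomology and vanishing of $\{l_n\}$/$\{m_n\}$ emerges naturally.
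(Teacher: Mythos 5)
Your proof is correct and takes essentially the same approach as the paper: the paper's proof is simply a citation of \cite[Proposition~4.11]{KKP}, which proceeds by exactly this reduction to $L_\infty$ minimal models, and you have written out in full the two inductions on the $L_\infty$-morphism relations that the cited reference carries out. There is no gap; the standard facts you invoke (existence and uniqueness of minimal models, the characterization of homotopy abelianity by vanishing of the transferred brackets, and the lifting of $f$ to an $L_\infty$-morphism with linear part $H(f)$) are precisely the homotopical input the paper also relies on.
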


\begin{proof} This is proved in  \cite[Proposition~4.11]{KKP} by using the reduction
to $L_{\infty}$ minimal models.
For a different proof of the first item see  also \cite[Lemma 6.1]{FMpoisson}.
\end{proof}

An useful tool in the framework of deformation theory via DGLA is the homotopy fiber construction:
given a differential graded Lie algebra $M$ we shall denote $M[t,dt]=\K[t,dt]\otimes M$, where
$\K[t,dt]=\K[t]\oplus \K[t]dt$
is the de Rham differential graded algebra of polynomial forms on the affine line.
For every $s\in \K$, the evaluation at $s$ gives a morphism of DGLAs:
\[ e_s\colon M[t,dt]\to M,\qquad t\mapsto s,\; dt\mapsto 0.\]
It is useful to consider also the morphism of graded vector spaces
\[ \int_0^1\colon M[t,dt]\to M[-1], \qquad   \int_0^1 t^a\otimes m=0,\quad \int_0^1 t^adt\otimes
m=\frac{m}{a+1}.\]

Given a morphism $\chi\colon L\to M$ of differential graded Lie algebras, its \emph{homotopy fiber} is defined as the DGLA
\[ K(\chi)=\{(l,m(t))\in L\times M[t,dt]\mid e_0(m(t))=0,\; e_1(m(t))=\chi(l)\}.\]
Notice that the projection $K(\chi)\to L$ is a morphism of DGLA.

A DG-vector space is a $\Z$-graded vector space equipped with a differential of degree $+1$.
For an integer $p$, the $p$-th suspension of a DG-vector space $(V,d_V)$
is equal to $(V[-p],d_{V[-p]})$, where
\[ V[-p]^i=V^{i-p},\qquad d_{V[-p]}=(-1)^pd_V\;.\]

\begin{lemma} In the notation above, if $\chi\colon L\to M$ is an injective morphism of DGLA, then the map
\[ \int_0^1\colon K(\chi)\to \coker(\chi)[-1],\quad \int_0^1(l,m(t))=\int_0^1 m(t)\pmod{\chi(L)[-1]},\]
is a quasi-isomorphism of complexes.
\end{lemma}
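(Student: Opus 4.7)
The plan is to verify that $\int_0^1$ is a chain map and then to prove it is a quasi-isomorphism by comparing two long exact sequences in cohomology via the five lemma. First I would check the chain map property using the Stokes-type identity
\[ d_M \int_0^1 \omega + \int_0^1 d\omega = e_1(\omega) - e_0(\omega), \qquad \omega \in M[t,dt], \]
which is easily verified on monomials $t^a\otimes m$ and $t^a dt\otimes m$. Since every $(l, m(t)) \in K(\chi)$ satisfies $e_1(m(t)) - e_0(m(t)) = \chi(l) \in \chi(L)$, the identity yields $\int_0^1 dm(t) \equiv -d_M \int_0^1 m(t) \pmod{\chi(L)}$, which is exactly the compatibility with the differential $-d_M$ on $\coker(\chi)[-1]$.

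Next I would set up the short exact sequence of complexes
\[ 0 \to N \to K(\chi) \to L \to 0, \qquad N = \{(0,m(t)) \in K(\chi) : e_0(m)=e_1(m)=0\}, \]
where the projection $K(\chi) \to L$ is split by $l \mapsto (l, t\chi(l))$. Restricted to $N$, the map $\int_0^1$ is a genuine chain map to $M[-1]$, and it is a quasi-isomorphism: from the short exact sequence $0 \to N \to M[t,dt] \xrightarrow{(e_0,e_1)} M \oplus M \to 0$, using that $e_0$ and $e_1$ are quasi-isomorphisms inducing the same map on cohomology, the long exact sequence collapses to give $H^i(N) \cong H^{i-1}(M)$, with the isomorphism realized by $\int_0^1$ (for instance $[dt\otimes x] \mapsto [x]$). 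Substituting in the long exact sequence for $K(\chi)$ and computing the connecting map via the section ($d(l,t\chi(l)) = (0, dt\otimes \chi(l))$ for a cocycle $l$) yields
\[ \cdots \to H^{i-1}(L)\xrightarrow{\chi_*} H^{i-1}(M)\to H^i(K(\chi))\to H^i(L)\xrightarrow{\chi_*} H^i(M)\to \cdots .\]

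On the other hand, the long exact sequence of $0 \to L \to M \to \coker(\chi) \to 0$, rewritten using $H^i(\coker(\chi)[-1]) = H^{i-1}(\coker(\chi))$, takes the identical form with $H^i(\coker(\chi)[-1])$ in place of $H^i(K(\chi))$ and the same connecting maps $\chi_*$. I would then verify that $\int_0^1$ provides a morphism between these two long exact sequences which is the identity on $H^*(L)$ and $H^*(M)$, and conclude by the five lemma. The main obstacle lies in checking the commutativity of the two nontrivial squares: first, that $H^{i-1}(M) \cong H^i(N) \to H^i(K(\chi)) \to H^i(\coker(\chi)[-1])$ coincides with the natural projection $H^{i-1}(M) \to H^{i-1}(\coker(\chi))$; and second, that the projection $H^i(K(\chi)) \to H^i(L)$ equals the connecting map $H^i(\coker(\chi)[-1]) \to H^i(L)$ composed with $\int_0^1$. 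Both reduce to the identity $d_M\int_0^1 m(t) = \chi(l)$ for cocycles $(l,m(t)) \in K(\chi)$, i.e.\ the Stokes-type formula from the first step.
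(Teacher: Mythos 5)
Your proof is correct. Note that the paper itself gives no argument here: its ``proof'' consists of the single word ``Straightforward'' together with citations to \cite{FMcone,ManettiSemireg}, so your write-up supplies exactly the details the authors omit. The route you take --- checking the chain-map property via the Stokes identity $d_M\int_0^1\omega+\int_0^1 d\omega=e_1(\omega)-e_0(\omega)$, then comparing the long exact sequence of $0\to N\to K(\chi)\to L\to 0$ (with $H^i(N)\cong H^{i-1}(M)$ and connecting maps identified with $\chi_*$ via the non-chain section $l\mapsto(l,t\chi(l))$) against that of $0\to L\to M\to\coker(\chi)\to 0$, and invoking the five lemma --- is a complete and standard argument; the cited references reach the same conclusion by identifying $K(\chi)$ up to quasi-isomorphism with the mapping cone of $\chi$, which for injective $\chi$ is quasi-isomorphic to $\coker(\chi)[-1]$, but nothing in your approach is missing or incorrect.
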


\begin{proof} Straightforward, cf.
\cite{FMcone,ManettiSemireg}.\end{proof}

Let $\mathbf{\Delta}_{\operatorname{mon}}$ be  the category whose
objects are the finite ordinal sets $[n]\!=\!\{0,1,\ldots,n\}$,
$n=0,1,\ldots$, and whose morphisms are order-preserving injective
maps among them. Every morphism in
$\mathbf{\Delta}_{\operatorname{mon}}$, different from the
identity, is a finite  composition of face maps:
\[
\partial_{k}\colon [i-1]\to [i],
\qquad \partial_{k}(p)=\begin{cases}p&\text{ if }p<k\\
p+1&\text{ if }k\le p\end{cases},\qquad k=0,\dots,i.
\]
The relations about compositions of them are generated by
\[ \partial_{l}\partial_{k}=
\partial_{k+1}\partial_{l}\, ,\qquad\text{for every }l\leq k.\]

Following the terminology of \cite{weibel}, a semicosimplicial object in
a category $\mathbf{C}$ is a  covariant functor $A_{\bullet}\colon
\mathbf{\Delta}_{\operatorname{mon}}\to \mathbf{C}$.
Equivalently,
a semicosimplicial  object  is a diagram in
$\mathbf{C}$:
 \[A_{\bullet}\colon\qquad
\xymatrix{ {A_0}
\ar@<2pt>[r]\ar@<-2pt>[r] & { A_1}
      \ar@<4pt>[r] \ar[r] \ar@<-4pt>[r] & { A_2}
\ar@<6pt>[r] \ar@<2pt>[r] \ar@<-2pt>[r] \ar@<-6pt>[r]&
\cdots ,}
\]
where each $A_i$ is in $\mathbf{C}$, and, for each $i>0$,
there are $i+1$ morphisms
\[
\partial_{k}\colon {A}_{i-1}\to {A}_{i},
\qquad k=0,\dots,i,
\]
such that $\partial_{l}
\partial_{k}=\partial_{k+1} \partial_{l}$, for any $l\leq k$.\par

In this paper we need to consider semicosimplicial groupoids
and semicosimplicial differential graded Lie algebras. In both cases we can perform the totalization construction; let's first consider the case of groupoids.

Given a semicosimplicial groupoid
 \[
\sG_{\bullet}:\qquad \xymatrix{ {{\sG}_0}
\ar@<2pt>[r]\ar@<-2pt>[r] & { {\sG}_1}
      \ar@<4pt>[r] \ar[r] \ar@<-4pt>[r] & { {\sG}_2}
\ar@<6pt>[r] \ar@<2pt>[r] \ar@<-2pt>[r] \ar@<-6pt>[r]&
\cdots}
\]
the groupoid
$\operatorname{Tot}(\sG_{\bullet})$, also called \emph{groupoid of descent data}, is
defined in the following way \cite{hinichdescent,semireg2011}:
\begin{enumerate}

\item The objects of $\operatorname{Tot}(\sG_{\bullet})$ are the
pairs $(l,m)$ with $l$ an object in $\sG_0$ and $m$ a morphism in $\sG_1$ between $
\partial_{0}l$ and $\partial_{1}l$; we require that the three images of $m$ via the maps $\partial_{i}$ are the edges of a 2-simplex in the nerve of $\sG_2$, i.e.,
\[   (\partial_{0}m)(\partial_{1}m)^{-1}(\partial_{2}m)=1 \text{ in }
\Mor_{\sG_2}(\partial_{2}\partial_{0}l,\partial_{2}\partial_{0}l).\]

\item The morphisms between $(l_0,m_0)$ and $(l_1,m_1)$ are the morphisms
$a\in \Mor_{\sG_0}(l_0,l_1)$ making the diagram
\[
\xymatrix{
\partial_{0}l_0\ar[r]^{m_0}\ar[d]_{\partial_{0}a}&\partial_{1}l_0\ar[d]^{{\partial_{1}a}}\\
\partial_{0}l_1\ar[r]^{m_1}&\partial_{1}l_1
}
\]
commutative in $\sG_1$.
\end{enumerate}

The totalization of semicosimplicial groupoids is functorial and commutes with equivalences: more precisely, if
$\gamma\colon \sF_{\bullet}\to \sG_{\bullet}$ is a morphism of semicosimplicial groupoids, then
$\Tot(\gamma)\colon \Tot(\sF_\bullet)\to \Tot(\sG_\bullet)$ is a morphism of groupoids and, if every
$\gamma_n\colon \sF_n\to \sG_n$ is an equivalence of groupoids, then also $\Tot(\gamma)$ is an equivalence of groupoids.

Let
\[V_{\bullet}:\quad
\xymatrix{ {V_0}
\ar@<2pt>[r]\ar@<-2pt>[r] & { V_1}
      \ar@<4pt>[r] \ar[r] \ar@<-4pt>[r] & { V_2}
\ar@<6pt>[r] \ar@<2pt>[r] \ar@<-2pt>[r] \ar@<-6pt>[r]& \cdots ,}\]
be a semicosimplicial DG-vector space, with face operators $\de_i\colon V_n\to V_{n+1}$. Then
the graded vector space $C(V_\bullet)=\prod_{n\ge 0}V_n[-n]$ carries the two
differentials
\[ d=\prod_{n}d_{V_n[-n]}=\prod_{n}(-1)^nd_{V_n}\quad\text{and}\quad
\partial=\sum_{i}(-1)^i\partial_i\;.\]
More explicitly, if
$v\in V^i_n$, then $d(v)=(-1)^nd_{V_n}(v)\in V^{i+1}_n$ and
$\partial(v)=\partial_0(v)-\partial_1(v)+\cdots+(-1)^{n+1}
\partial_{n+1}(v)\in V_{n+1}^i$.
Since $d^2=\partial^2=d\partial+\partial d=0$ we may define the
cochain complex of $V_{\bullet}$ as  the differential graded vector space
$C(V_\bullet)$ equipped with the differential $d+\partial$.

Let $\Omega_n$ be the polynomial de Rham algebra of the standard $n$-dimensional simplex.
In other words, $\Omega_n$ is the polynomial DG-algebra generated by $t_0,\ldots,t_n$ of degree zero and $dt_0,\ldots,dt_n$ of degree one subject to the relations $t_0+\cdots+t_n=1$ and $dt_0+\cdots+dt_n=0$.
The entire collection $\{\Omega_n\}$, $n\ge 0$, has a natural structure of simplicial DG-algebras, where
the face map $\de_i\colon [n-1]\to [n]$ induces the morphism of DG-algebras
\[ \de_i^*\colon \Omega_n\to \Omega_{n-1},\qquad \de_i^*t_k=
\begin{cases}t_k&\text{ for }k<i\\
0&\text{ for }k=i\\
t_{k-1}&\text{ for }k>i\;,\end{cases}\qquad \de_i^*(dt_k)=d(\de_i^*t_k)\,.\]

\begin{definition}
The (Thom-Whitney-Sullivan)
totalization of a semicosimplicial DG-vector space
\[V_{\bullet}:\qquad
\xymatrix{ {V_0}
\ar@<2pt>[r]\ar@<-2pt>[r] & { V_1}
      \ar@<4pt>[r] \ar[r] \ar@<-4pt>[r] & { V_2}
\ar@<6pt>[r] \ar@<2pt>[r] \ar@<-2pt>[r] \ar@<-6pt>[r]& \cdots ,}\]
is defined as
\[ \Tot(V_\bullet)=\left\{(x_n)\in \prod_{n\ge 0} \Omega_n\otimes V_n\;\middle|\;
(\partial_k^*\otimes Id)x_n=
(Id\otimes \partial_{k})x_{n-1}\; \text{ for every }\; 0\le k\le
n\right \}.\]
\end{definition}

The functor $\Tot$ is exact: given a  sequence $0\to V_\bullet\to W_\bullet\to U_\bullet\to 0$
of semicosimplicial DG-vector spaces such that
$0\to V_n\to W_n\to U_n\to 0$ is exact for every $n$, then also
$0\to \Tot(V_\bullet)\to \Tot(W_\bullet)\to \Tot(U_\bullet)\to 0$ is exact.
This follows immediately from the definition and the simplicial contractibility  of the simplicial 
DG-algebra $\Omega_\bullet$ \cite[Prop. 1.1]{BG76}.

By Stokes formula, the \emph{Whitney integration  map}
\[ I\colon \Tot(V_\bullet)\to C(V_\bullet)\,,\]
defined componentwise as
\[ \Tot(V_\bullet)^{p}\xrightarrow{\;\text{inclusion}\;}\prod_{n\ge 0}({\bigoplus}_i \Omega_n^{n-i}\otimes V_n^{p-n+i})\xrightarrow{\;\prod_n \int_{\Delta^n}\otimes Id_{V_n}\;}\prod_n V_n^{p-n}=C(V_\bullet)^p\,,\]
is a surjective morphism of DG-vector spaces and, by a theorem of Whitney, Thom, Sullivan and Dupont (see e.g. \cite{getzler04,navarro} for a proof), it is also a quasi-isomorphism.

\begin{example}\label{ex.coomologiadelTotversusHypercohomology}
Let $\sF^*$ be a bounded below complex of coherent sheaves on a complex manifold $X$. Given a open Stein covering $\sU=\{U_i\}$ of $X$
we can consider the semicosimplicial DG-vector space
\[ \sF^*(\sU)_{\bullet}:\qquad
   \xymatrix{\displaystyle\prod_{i}^{\phantom{i}}\sF^*(U_{i})\ar@<0.65ex>[r]\ar@<-0.65ex>[r]&
\displaystyle\prod_{i,j}^{\phantom{i}}\sF^*(U_{ij})\ar@<1.00ex>[r]\ar[r]\ar@<-1.00ex>[r]&
\displaystyle\prod_{i,j,k}^{\phantom{i}}\sF^*(U_{ijk})\cdots}\]
and then for every integer $p$ we have an isomorphism
\[ H^p(\Tot(\sF^*(\sU)_{\bullet}))\simeq H^p(C(\sF^*(\sU)_{\bullet}))=\H^p(X,\sF^*),\]
where $\H^*$ denotes the hypercohomology groups.
\end{example}

When $A_{\bullet}$ if a semicosimplicial algebra (either associative or Lie), then $\Tot(A_{\bullet})$
inherits a natural structure of algebra and, via the quasi-isomorphism $I$, this structure
induces in the cohomology of $C(A_\bullet)$ not only the cup products, but also the higher Massey products.
Morally,  the totalization is the smallest
natural differential graded multiplicative structure giving the correct cohomology of a
semicosimplicial algebra \cite[pag. 300]{sullivan}.

We are now ready to recall Hinich's theorem on descent of Deligne groupoids.

\begin{theorem}[Descent of Deligne groupoids, \cite{hinichdescent}]
\label{thm.hinichdescent}
Let
\[L_{\bullet}:\qquad
\xymatrix{ {L_0}
\ar@<2pt>[r]\ar@<-2pt>[r] & { L_1}
      \ar@<4pt>[r] \ar[r] \ar@<-4pt>[r] & { L_2}
\ar@<6pt>[r] \ar@<2pt>[r] \ar@<-2pt>[r] \ar@<-6pt>[r]& \cdots ,}\]
be a semicosimplicial differential graded Lie algebra concentrated in positive degrees, i.e., $L_i^j=0$ for every $i$ and every $j<0$.
Then there exists a natural equivalence of formal pointed groupoids
\[ \Del_{\Tot(L_\bullet)}\to \Tot(\Del_{L_\bullet})\;,\]
where $\Del_{L_\bullet}$ is the semicosimplicial formal groupoid
\[\Del_{L_{\bullet}}:\qquad
\xymatrix{ \Del_{L_0}
\ar@<2pt>[r]\ar@<-2pt>[r] & \Del_{L_1}
      \ar@<4pt>[r] \ar[r] \ar@<-4pt>[r] & \Del_{L_2}
\ar@<6pt>[r] \ar@<2pt>[r] \ar@<-2pt>[r] \ar@<-6pt>[r]& \cdots\quad .}\]
\end{theorem}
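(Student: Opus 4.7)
The plan is to construct, functorially in $A\in\Art_{\K}$, a morphism of groupoids
\[
\Phi_A\colon \Del_{\Tot(L_\bullet)}(A)\longrightarrow \Tot(\Del_{L_\bullet})(A),
\]
and then prove it is an equivalence by induction on the nilpotency of $\mathfrak{m}_A$. Given a Maurer--Cartan element $x=(x_n)\in \Tot(L_\bullet)^1\otimes\mathfrak{m}_A$, the zeroth component $x_0\in L_0^1\otimes\mathfrak{m}_A$ (since $\Omega_0=\K$) is automatically an MC element of $L_0$, and supplies the object $l\in \Del_{L_0}(A)$ of the descent datum. Writing the first component as $x_1=\alpha(t)+\beta(t)\,dt$ with $\alpha(t)\in L_1^1\otimes\mathfrak{m}_A$ and $\beta(t)\in L_1^0\otimes\mathfrak{m}_A$ polynomial in $t$, the MC equation in $\Omega_1\otimes L_1\otimes\mathfrak{m}_A$ encodes simultaneously a family of MC elements $\alpha(t)$ and an infinitesimal gauge $\beta(t)$ deforming them. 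The semicosimplicial compatibility $\partial_k^*x_1=\partial_k x_0$ for $k=0,1$ fixes the endpoints $\alpha(0)=\partial_1 x_0$ and $\alpha(1)=\partial_0 x_0$, and solving the time-ordered exponential ODE determined by $\beta(t)$ (well posed because the nilpotency of $\mathfrak{m}_A$ truncates the series) produces a gauge $m$ realizing a morphism $\partial_0 l\to \partial_1 l$ in $\Del_{L_1}(A)$. An analogous two-dimensional integration over the 2-simplex applied to $x_2$ yields the cocycle condition $(\partial_0 m)(\partial_1 m)^{-1}(\partial_2 m)=1$. For morphisms, a gauge $e^a$ with $a\in \Tot(L_\bullet)^0\otimes\mathfrak{m}_A$ contributes $a_0\in L_0^0\otimes\mathfrak{m}_A$ as the required morphism in $\Del_{L_0}(A)$, while integrating $a_1$ over the 1-simplex produces the commutative square.

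To show $\Phi_A$ is an equivalence, I would proceed by induction on the least $n$ with $\mathfrak{m}_A^{n+1}=0$. The case $A=\K$ is trivial since both sides are the one-point groupoid. For the inductive step, I would decompose a general $A$ via a chain of small extensions $0\to V\to B\to A\to 0$ with $V\cdot\mathfrak{m}_B=0$ and compare, on the two sides, the fibers of the restriction functor above a fixed descent datum over $A$. These fibers are torsors (up to isotropy) under abelian groups classified by cohomology of $L_\bullet$: in degree one they control lifts of MC classes, in degree zero they control infinitesimal automorphisms. The Whitney integration map $I\colon \Tot(L_\bullet)\to C(L_\bullet)$ is a quasi-isomorphism by the Dupont--Whitney--Sullivan theorem recalled above, so the relevant groups computed from $\Tot(L_\bullet)$ and from $C(L_\bullet)$ agree; combined with the homotopy invariance of $\Del$ (Theorem~\ref{thm.homotopyinvariance}), a standard five-lemma-type argument on orbits and stabilisers yields essential surjectivity and full faithfulness of $\Phi_A$ at each inductive step. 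Naturality in $A$ is built into the construction.

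The main obstacle, and the reason one needs the positive-degree hypothesis $L_i^j=0$ for $j<0$, is the verification of the 2-cocycle condition. A priori the three gauge transformations produced by restricting the ODE integration to the three edges of the standard 2-simplex lie in three separate components of $\Del_{L_2}(A)$, and nothing in their pointwise definition forces their composition to be trivial. The resolution is to treat $x_2\in\Omega_2\otimes L_2\otimes\mathfrak{m}_A$ as a single MC element on the full 2-simplex and exploit its interior: a surface-ordered integration of $x_2$, again truncated by nilpotency, produces an explicit element of $\exp(L_2^0\otimes\mathfrak{m}_A)$ trivialising the boundary composition. The positivity assumption guarantees that every step of this procedure stays in non-negative degree and leaves no obstruction in negative cohomology; the analogous positivity was already essential in Theorem~\ref{thm.homotopyinvariance}(2).
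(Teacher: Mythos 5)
The paper offers no proof of this statement: it is quoted verbatim from Hinich's paper \cite{hinichdescent}, so there is no ``paper's own proof'' to compare against. Judged on its own terms, your architecture is the standard direct one (construct $\Phi_A$ by integrating the $dt$-component of $x_1$ to a gauge, then prove equivalence by induction on small extensions using the Whitney quasi-isomorphism $I\colon\Tot(L_\bullet)\to C(L_\bullet)$ and the torsor/obstruction calculus), and most of it goes through. One point you omit but should record: for the inductive step to close, you need that $C(L_\bullet)^i=\prod_n L_n^{i-n}$ truncates, by the positivity hypothesis, to $L_0^0$ in degree $0$, $L_0^1\times L_1^0$ in degree $1$, and $L_0^2\times L_1^1\times L_2^0$ in degree $2$ --- exactly matching the data seen by the $2$-truncated descent groupoid $\Tot(\Del_{L_\bullet})$. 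Without this, the linearized objects and morphisms of the two sides would not even be comparable, so positivity enters the obstruction calculus and not only the cocycle condition.

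The genuine soft spot is your treatment of the $2$-cocycle condition. The condition demands that the composed boundary holonomy $g=(\partial_0 m)(\partial_1 m)^{-1}(\partial_2 m)$ be \emph{equal to the identity} in $\exp(L_2^0\otimes\mathfrak m_A)$, and producing ``an explicit element of $\exp(L_2^0\otimes\mathfrak m_A)$ trivialising the boundary composition'' cannot achieve this: conjugating or gauging $g$ by such an element changes nothing about whether $g=1$, and an element $h$ with $h\ast(\text{something})$ relating the edges only exhibits $g$ as an automorphism of the vertex MC element, which it already is by construction. The correct mechanism is different: filter by powers of $\mathfrak m_A$, so that on each associated graded piece $L_2\otimes(\mathfrak m_A^k/\mathfrak m_A^{k+1})$ the bracket vanishes; there the holonomy of the restriction of $x_2$ to $\partial\Delta^2$ is a literal boundary integral, and Stokes' theorem converts it into $\pm\int_{\Delta^2}d_{L_2}\bigl(x_2^{2,-1}\bigr)$, where $x_2^{2,-1}\in\Omega_2^2\otimes L_2^{-1}\otimes\mathfrak m_A$ is the component that would obstruct exact triviality. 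Positivity forces $L_2^{-1}=0$, hence $g\equiv 1$ modulo the next filtration step, and one climbs the filtration. So the hypothesis does not ``produce a trivialising element''; it kills the room where the obstruction to exact (rather than up-to-homotopy) triviality would live. With that correction, and with the degree bookkeeping above made explicit, your outline becomes a complete proof.
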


\begin{example}\label{ex.deligneperfibraootopica}
The simplest non trivial application of Theorem~\ref{thm.hinichdescent} describes  a ``small'' model for the Deligne groupoid of the homotopy fiber of a morphism $\chi\colon L\to M$ of positively graded differential graded Lie algebras. In fact, the morphism $\chi$ gives the semicosimplicial DGLA
\[ \chi_{\bullet}:\qquad
\xymatrix{ {L}
\ar@<2pt>[rr]^{\de_0=\chi}\ar@<-2pt>[rr]_{\de_1=0} && { M}
      \ar@<4pt>[r] \ar[r] \ar@<-4pt>[r] & {0}
\ar@<6pt>[r] \ar@<2pt>[r] \ar@<-2pt>[r] \ar@<-6pt>[r]& \cdots ,}\]
and, since $\Omega_1\simeq \K[t,dt]$ we have $K(\chi)\simeq \Tot(\chi_\bullet)$; therefore there exists a natural equivalence of formal groupoids
\[ \Del_{K(\chi)}\to \Tot\left(\xymatrix{ {\Del_L}
\ar@<2pt>[r]^{\chi}\ar@<-2pt>[r]_{0} & { \Del_M} }\right)\;.\]
Here, for every $A\in \Art_{\K}$, the objects of $ \Tot\left(\xymatrix{ {\Del_L}
\ar@<2pt>[r]^{\chi}\ar@<-2pt>[r]_{0} & { \Del_M} }\right)(A)$ are the pairs $(x,e^a)$, where
$x$ is a solution of the Maurer-Cartan equation in $L\otimes\mathfrak{m}_A$ and $e^a\in \exp(M^0\otimes\mathfrak{m}_A)$ satisfies the equation $e^a\ast \chi(x)=0$.

A morphism between two objects $(x,e^a)$ and $(y,e^b)$ is an element $e^{\alpha}\in
\exp(L^0\otimes\mathfrak{m}_A)$ such that $e^\alpha\ast x=y$ and
$e^b=e^ae^{-\chi(\alpha)}$.

If $L$ is a differential graded Lie subalgebra of $M$ and $\chi$ is the inclusion map,
then the objects of the total groupoid are  in bijection with the
elements $e^a\in \exp(M^0\otimes\mathfrak{m}_A)$ such that
$e^{-a}\ast 0\in L\otimes\mathfrak{m}_A$; moreover, in this particular case the natural transformation
\[ \Tot\left(\xymatrix{ {\Del_L}
\ar@<2pt>[r]^{\chi}\ar@<-2pt>[r]_{0} & { \Del_M} }\right)\to
\pi_0\left(\Tot\left(\xymatrix{ {\Del_L}
\ar@<2pt>[r]^{\chi}\ar@<-2pt>[r]_{0} & { \Del_M} }\right)\right)\]
is an equivalence of formal groupoids.
\end{example}

We conclude this section with some  remarks that will be useful in this paper.

\begin{remark}
Let $f\colon L_\bullet\to M_\bullet$ be a morphism of semicosimplicial differential graded Lie algebras: this is given by a sequence of morphisms of DGLA $f_n\colon L_n\to M_n$ commuting with face operators.
Taking homotopy fibers we get a semicosimplicial DGLA $K(f)_\bullet$,  where $K(f)_n=K(f_n)$ is
the homotopy fiber of $f_n$.\par

It is easy to see that the homotopy fiber of the morphism $f\colon \Tot(L_\bullet)\to \Tot(M_\bullet)$
is naturally isomorphic to $\Tot(K(f)_\bullet)$; we refer to \cite{donarendiconti} for a detailed proof.
\end{remark}

\begin{remark}\label{rem.equalizzatorecosimpliciale}
Let $L_\bullet$ be a semicosimplicial DGLA and denote by $H=\{x\in L_0\mid \de_0x=\de_1x\}$ the equalizer of
$\de_0,\de_1\colon L_0\to L_1$. Then the map
\[ e\colon H\to \Tot(L_\bullet),\qquad
e(x)=(1\otimes x, 1\otimes \de_0 x,1\otimes \de_0^2 x,1\otimes \de_0^3 x,\ldots),\]
is a well defined morphism of differential graded Lie algebras. Moreover, the composition of $e$ with the quasi-isomorphism $I\colon \Tot(L_\bullet)\to C(L_\bullet)$ is the natural inclusion
$i\colon H\hookrightarrow C(L_\bullet)$, cf. \cite{BGNT,semireg2011}. In particular $e$ is a quasi-isomorphism if and only if $i$ is a quasi-isomorphism.
\end{remark}

\bigskip
\section{Differential Gerstenhaber algebras in Poisson geometry}

Let $X$ be a complex manifold and let $\Oh_{X}$ denote  the sheaf of holomorphic functions on $X$ and with $\Theta_{X}$ the holomorphic tangent sheaf, also denote by $\bigwedge^{\ast}\Theta_{X}=\bigoplus_{i\geq0}\bigwedge_{\Oh_{X}}^{i}\Theta_{X}[-i]$ the sheaf of holomorphic polyvector fields and with $(\Omega^{\ast}_{X},\de)$ the sheaf of holomorphic differential forms on $X$, the last one with the usual structure of sheaf of differential graded algebras (DGAs for short). Following the notation of \cite{FMpoisson},
given a polyvector field  $\eta\in\bigwedge^{i}\Theta_{X}(U)$ we denote by
\[\bi_{\eta}\colon\Omega^{\ast}_{X}(U)\rightarrow\Omega^{\ast-i}_{X}(U),\qquad \bi_{\eta}(\alpha)=\eta\contr\alpha,\]
the corresponding interior product operator; here we adopt the convention that $\bi_{\alpha\wedge\beta}=\bi_{\alpha}\circ \bi_{\beta}$.
Moreover we shall denote by
$\bl_{\eta}=[\bi_{\eta},\de]\colon\Omega^{\ast}_{X}(U)\rightarrow\Omega^{\ast-i+1}_{X}(U)$ the holomorphic Lie derivative on differential forms.

The following algebraic structures play a major role in Poisson geometry:

\begin{definition} A Poisson algebra $(A,\cdot,\{,\})$ is a commutative algebra $(A,\cdot)$ equipped with a Lie bracket
$\{,\}$ verifying the Poisson identity:
\begin{eqnarray}\label{prel}\{a,b\cdot c\}=\{a,b\}\cdot c+b\cdot\{a,c\}\end{eqnarray}
A Gerstenhaber algebra $(A,\cdot,[\cdot,\cdot])$ is a graded commutative algebra $(A,\cdot)$ equipped with an odd bracket $[\cdot,\cdot]:A^{i}\otimes A^{j}\rightarrow A^{i+j-1}$ making $(A[1],[\cdot,\cdot])$ into a graded Lie algebra and verifying the odd Poisson identity:
\begin{eqnarray}\label{oddprel} [a,b\cdot c]=[a,b]\cdot c+ (-1)^{(|a|-1)|b|}b\cdot[a,c]\;.\end{eqnarray}
A differential Gerstenhaber algebra $(A,\cdot,[\cdot,\cdot],d)$ (DGeA for short) is a Gerstenhaber algebra $(A,\cdot,[\cdot,\cdot])$ equipped with a degree 1 differential $d$ making $(A,\cdot,d)$ into a DGA and $(A[1],[\cdot,\cdot],d)$ into a DGLA.
\end{definition}

\begin{remark}\label{remarkideali} Let $(A,\cdot,[\cdot,\cdot])$ be a  Gerstenhaber algebra
and let $I\subset A$ be the ideal of the underlying commutative graded algebra $(A,\cdot)$  generated  by a set of homogeneous elements $S\subset I$. Then $I$ is $[\cdot,\cdot]$-closed if and only if $[S,S]\subset I$.
\end{remark}

As a main example $\bigwedge^{\ast}\Theta_{X}$ carries a natural structure of sheaf of Gerstenhaber algebras with the exterior product and the Schouten-Nijenhuis bracket $[\cdot,\cdot]_{SN}$, see e.g. \cite{vaisman}.
Recall that
\[[\cdot,\cdot]_{SN}:{\bigwedge}^{i}\Theta_{X}\otimes{\bigwedge}^{j}\Theta_{X}\rightarrow{\bigwedge}^{i+j-1}\Theta_{X}\]
is defined uniquely according to \eqref{oddprel} so that  $[\eta,\xi]_{SN}$ is the usual bracket for
$\eta,\xi\in\Theta_{X}$, while for $\eta\in\Theta_{X}$ and $f\in\bigwedge^{0}\Theta_{X}=\Oh_{X}$ we have $[\eta,f]_{SN}=\eta(f)=\eta\contr df$:
the only thing to be checked to see this turns $\bigwedge^{\ast}\Theta_{X}$ into a sheaf of Gerstenhaber algebras is the (odd) Jacobi identity for $[\cdot,\cdot]_{SN}$, for this one reduce himself to check it on $\bigwedge^{\leq 1}\Theta_{X}$, which is plain.
Notice that for $\eta\in\Theta_{X}$ the operator $[\eta,\cdot]_{SN}$
is the Lie derivative with respect to $\eta$.

\begin{definition} A holomorphic Poisson bivector on $X$ is a section $\pi\in H^{0}(X,\bigwedge^{2}\Theta_{X})$ satisfying the integrability condition:
\begin{eqnarray}\label{Poissonbivector} [\pi,\pi]_{SN}=0\;.\end{eqnarray}
A holomorphic Poisson manifold is a pair $(X,\pi)$ consisting of a complex manifold $X$ and a holomorphic Poisson bivector $\pi$ on $X$.
\end{definition}

For instance, if $\dim X=2$ then every global  section of $\bigwedge^{2}\Theta_{X}$ is a  holomorphic Poisson bivector.
If $\dim X=3$, via the natural identification
\[ {\bigwedge}^{2}\Theta_{X}=\Omega^1_X\otimes {\bigwedge}^{3}\Theta_{X}=\Omega^1_X(K_X^{-1}),\]
a global section $\alpha\in H^0(X,\Omega^1_X(K_X^{-1}))$ corresponds to a holomorphic Poisson bivector if and only if
\[ \alpha\wedge d\alpha=0\in H^0(X,\Omega^3_X(K_X^{-2}))=H^0(X,K_X^{-1}).\]
In particular, if $\dim X=3$ and $K_X^{-1}=L^{2}$, then for every pair of sections $\beta,\gamma\in H^0(X,L)$ the section $\alpha=\beta d\gamma-\gamma d\beta\in H^0(X,\Omega^1_X(K_X^{-1}))$ gives a holomorphic Poisson bivector.

As another example, if $A\subseteq H^0(X,\Theta_X)$ is an abelian Lie subalgebra, then every element in the image of $\bigwedge^2A\to H^{0}(X,\bigwedge^{2}\Theta_{X})$ is a Poisson bivector.

Every holomorphic symplectic manifold $(X,\omega)$ is a holomorphic Poisson manifold, where
the Poisson bivector $\pi$ is uniquely determined by the condition
\[ \bi_{\pi}(\bi_{\eta}(\omega)\wedge\alpha)=\bi_{\eta}(\alpha),\qquad \eta\in\Theta_X,\qquad \alpha\in\Omega^1_X.\]

The datum of a holomorphic Poisson bivector $\pi$ on $X$ induces several additional structures, cf. \cite{KS08,vaisman}:

1) the \emph{Lichnerowicz-Poisson differential} $d_{\pi}=[\pi,\cdot]_{SN}\colon\bigwedge^{\ast}\Theta_{X}\rightarrow\bigwedge^{\ast+1}\Theta_{X}$ 
inducing on  $\bigwedge^{\ast}\Theta_{X}$ the structure of sheaf of DGeAs.

2) the \emph{Poisson bracket}
$\{\cdot,\cdot\}_{\pi}\colon\Oh_{X}\bigwedge\Oh_{X}\rightarrow\Oh_{X}$ given by
\[\{f,g\}_{\pi}=[[\pi,f]_{SN},g]_{SN}=[d_{\pi}f,g]_{SN}=\bi_{\pi}(\de f\wedge\de g)\,.\]
This clearly satisfies condition \eqref{prel} and it is well known that condition \eqref{Poissonbivector} is equivalent to the Jacobi identity for $\{\cdot,\cdot\}_{\pi}$. Therefore a holomorphic Poisson manifold could be equivalently defined as a complex manifold $X$ together with a sheaf of Poisson algebras structure on $\Oh_{X}$ (cf. \cite{vaisman}). We note that in a system of local holomorphic  coordinates $z_{1},\ldots,z_{n}$  we can reconstruct the Poisson bivector from the Poisson bracket by the formula
\[\pi=\sum_{1\leq i<j\leq n}\pi_{ij}\frac{\de}{\de z_{i}}\wedge\frac{\de}{\de z_{j}},
\qquad\text{ where }\quad\pi_{ij}=-\{z_{i},z_{j}\}_{\pi}\,.\]

3) the \emph{Koszul bracket} $[\cdot,\cdot]_{\pi}\colon\Omega^{i}_{X}\otimes\Omega^{j}_{X}\rightarrow\Omega^{i+j-1}_{X}$, defined by the formula:
\begin{eqnarray}\label{koszul}  [\alpha,\beta]_{\pi}:= (-1)^{i}(\bl_{\pi}(\alpha\wedge\beta)-\bl_{\pi}(\alpha)\wedge\beta)-\alpha\wedge\bl_{\pi}(\beta),\qquad \alpha\in\Omega^i_X,\end{eqnarray}
inducing on  $(\Omega^{\ast}_{X},\de)$ the structure of a sheaf of DGeAs 
(this is shown for instance in \cite{koszul,FMpoisson}).

4) the \emph{anchor map} $\pi^{\#}\colon\Omega^{\ast}_{X}\rightarrow\bigwedge^{\ast}\Theta_{X}$:
this is the morphism of graded sheaves defined for $\alpha\in\Omega^{1}_{X}$ by the formula
\begin{eqnarray}\label{anchor}
\pi^{\#}(\alpha)(f)= \bi_{\pi}(\alpha\wedge\de f),\qquad f\in\Oh_{X},
\end{eqnarray}
and then uniquely extended to an $\Oh_{X}$-linear morphism of sheaves of graded algebras.

\begin{remark}\label{rem.ancoramorfismoDGLA} It is well known
that
\[\pi^{\#}\colon (\Omega^*_X,\wedge,[\cdot,\cdot]_{\pi},\de)\to ({\bigwedge}^*\Theta_X, \wedge, [\cdot,\cdot]_{SN}, d_{\pi})\]
is a morphism of sheaves of differential Gerstenhaber algebras. Very briefly,
in order to prove that $\pi^{\#}([\alpha,\beta]_{\pi})=[\pi^{\#}(\alpha),\pi^{\#}(\beta)]_{SN}$ it is sufficient to check it for $\alpha\in\Omega^{1}_{X}$, $\beta\in\Omega^{0}_{X}=\Oh_{X}$, where it follows from definition (cf. \eqref{anchor} and \eqref{koszul}), 
or $\beta\in\Omega^{1}_{X}$, which is shown for instance in \cite{koszul, vaisman}; the proof of $\pi^{\#}(\de\alpha)=d_{\pi}(\pi^{\#}(\alpha))$ follows again from the Formula
\eqref{anchor} for $\alpha=f\in\Omega^{0}_{X}=\Oh_{X}$; in general it is sufficient to check it on forms of the type
$\alpha=f \,\de g_{1}\wedge\cdots\wedge\de g_{k}$, with $f,g_{1},\ldots,g_{k}\in\Oh_{X}$, which is a straightforward 
direct inspection.
\end{remark}

\begin{remark}
For a differential form $\alpha\in \Omega^k_X(U)$ lets denote by
$\alpha\wedge\colon \Omega^*_X(U)\to \Omega^{*+k}_X(U)$ the left multiplication by $\alpha$.
It is not difficult to prove the  formula
\[ \bi_{\pi^{\#}(\alpha)}=\frac{[\bi_{\pi},\cdot\;]^k}{k!}(\alpha\wedge),\]
where $[\,\cdot,\cdot\,]$ is the graded commutator in the space of endomorphisms of $\Omega^*_X(U)$.
When the Poisson bivector is induced by a holomorphic symplectic form $\omega$
the anchor map $\pi^{\#}\colon \Omega^1_X\to \Theta_X$
is an isomorphism with inverse $\omega^{\flat}\colon \Theta_X\to \Omega^1_X$,
$\omega^{\flat}(\eta)=\bi_{\eta}(\omega)$.

\end{remark}

\bigskip
\section{Deformations of holomorphic Poisson manifolds}

Let $X$ be a complex manifold, recall that a deformation of $X$  over $A\in\Art_{\C}$ is a pull-back diagram of complex spaces:
\begin{eqnarray}\label{deformation}
\xymatrix{X\ar[d]\ar[r]^{i}&\sX\ar[d]^{p}\\ \operatorname{Spec}\,\C\ar[r]&\operatorname{Spec}\,A}
\end{eqnarray}
with $p$ a smooth morphism; equivalently, it is the data of a sheaf $\Oh_{\sX}$ of flat unitary $A$-algebras on $X$ together with a morphism of sheaves of $A$-algebras  $\Oh_{\sX}\rightarrow\Oh_{X}$ which is locally isomorphic to the natural projection
$\Oh_{X}\otimes A\rightarrow \Oh_{X}$.
Equivalences between deformations $\sX_{0}$ and $\sX_{1}$ of $X$ over $A$ are isomorphisms of sheaves
of $A$-algebras $\Oh_{\sX_{0}}\rightarrow\Oh_{\sX_{1}}$ over $\Oh_{X}$; the self equivalences of the trivial deformation $\Oh_{X}\otimes A$ form a group canonically isomorphic to $\exp(H^{0}(X;\Theta_{X})\otimes\mathfrak{m}_A)$, being the isomorphism the usual exponential of derivations.

Given $\sU=\{U_{i}\}_{i\in I}$ an open covering of $X$ by Stein open sets, every deformation $\sX$ trivializes globally over each $U_{i}$, so that it can be reconstructed up to isomorphism by gluing the trivial deformations $\Oh_{U_{i}}\otimes A\rightarrow\Oh_{U_{i}}$ along double intersections via a family of transition automorphisms $e^{\eta_{ij}}\in\exp(\Theta_{X}(U_{ij})\otimes\mathfrak{m}_A)$ satisfying the cocycle condition $e^{\eta_{ij}}e^{\eta_{jk}}=e^{\eta_{ik}}\in\exp(\Theta_{X}(U_{ijk})\otimes\mathfrak{m}_A)$ on triple intersections (equivalently $\eta_{ij}\bullet\eta_{jk}=\eta_{ik}$, where $\bullet$ is the Baker-Campbell-Hausdorff product in the nilpotent Lie algebra $\Theta_{X}(U_{ijk})\otimes\mathfrak{m}_A$).

Consider the sheaf of $\Oh_{\sX}$-modules $\Theta_{\sX/A}$ of $A$-linear derivations of $\Oh_{\sX}$;  
as in the previous section the natural
structure of sheaf of Lie algebras on $\Theta_{\sX/A}$ extends to a sheaf of Gerstenhaber algebras structure on  $\bigwedge^{\ast}\Theta_{\sX/A}=\bigoplus_{i\geq0}\bigwedge^{i}_{\Oh_{\sX}}\Theta_{\sX/A}[-i]$ via the Schouten-Nijenhuis bracket; notice that if $U\subset X$ is a Stein open subset, then $\bigwedge^{\ast}\Theta_{\sX/A}(U)\cong\bigwedge^{\ast}\Theta_{X}(U)\otimes A$ with the Gersthenaber algebra structure given by scalar extension with $A$. Let as before $\sU$ be a covering of $X$ by Stein open sets and $e^{\eta_{ij}}$ denote transition automorphisms for $\Oh_{\sX}$ over $\sU$ with $\eta_{ij}\in\Theta_{X}(U_{ij})\otimes\mathfrak{m}_{A}\subset\Theta_{\sX/A}(U_{ij})$. The
adjoint operators
\[\ad\,\eta_{ij}=[\eta_{ij},\cdot]_{SN}\colon{\bigwedge}^{\ast}\Theta_{\sX/A}(U_{ij})\rightarrow{\bigwedge}^{\ast}\Theta_{\sX/A}(U_{ij})\]
are degree zero nilpotent Gerstenhaber derivations and their exponentials
$e^{\ad\,\eta_{ij}}$ are the transition automorphisms over $\sU$ for the sheaf $\bigwedge^{\ast}\Theta_{\sX/A}$, in fact it suffices to check this for $f\in\Oh_{\sX}(U_{ij})=\bigwedge^{0}\Theta_{\sX/A}(U_{ij})$ where $e^{\ad\,\eta_{ij}}(f)=e^{\eta_{ij}}(f)$ and for $\xi\in\Theta_{\sX/A}(U_{ij})$ where $e^{\ad\,\eta_{ij}}(\xi)=e^{\eta_{ij}}\circ\xi\circ e^{-\eta_{ij}}$. In the same way an equivalence between deformations $\sX_{0}$ and $\sX_{1}$ of $X$ over $A$ induce an isomorphism $\bigwedge^{\ast}\Theta_{\sX_{0}/A}\rightarrow \bigwedge^{\ast}\Theta_{\sX_{1}/A}$: if the equivalence is locally given by the isomorphisms
\[\xymatrix{\Oh_{\sX_{0}}(U_{i})\cong\Oh_{X}(U_{i})\otimes A\ar[r]^-{e^{\eta_{i}}}&\Oh_{X}(U_{i})\otimes A\cong\Oh_{\sX_{1}}(U_{i})}\;,\]
then the induced isomorphism is locally described by the maps
\[\xymatrix{\bigwedge^{\ast}\Theta_{\sX_{0}/A}(U_{i})\cong\bigwedge^{\ast}\Theta_{X}(U_{i})\otimes A\ar[r]^-{e^{\ad\,\eta_{i}}}&\bigwedge^{\ast}\Theta_{X}(U_{i})\otimes A\cong\bigwedge^{\ast}\Theta_{\sX_{1}/A}(U_{i})}.\]

\begin{definition}\label{def.deformationPoisson}
A deformation of a holomorphic Poisson manifold $(X,\pi)$ over $A\in\Art_{\C}$ is the data of a deformation of $X$ as in \eqref{deformation}
$\xymatrix{X\ar[r]^-{i}&\sX\ar[r]^-{p}&\operatorname{Spec}\,A}$ and of a section  $\widetilde{\pi}\in H^{0}(X;\bigwedge^{2}\Theta_{\sX/A})$ such that $[\widetilde{\pi},\widetilde{\pi}]_{SN}=0$ and such that $\widetilde{\pi}$
restricts to $\pi$ under the natural projection $\bigwedge^{\ast}\Theta_{\sX/A}\rightarrow\bigwedge^{\ast}\Theta_{X}$ (locally isomorphic to $\bigwedge^{\ast}\Theta_{X}\otimes A\rightarrow\bigwedge^{\ast}\Theta_{X}$).

Given two deformations $(\sX_{0},\widetilde{\pi}_{0})$, $(\sX_{1},\widetilde{\pi}_{1})$ an equivalence between them is an equivalence between $\sX_{0}$ and $\sX_{1}$ such that the induced isomorphism $\bigwedge^{\ast}\Theta_{\sX_{0}/A}\rightarrow\bigwedge^{\ast}\Theta_{\sX_{1}/A}$ sends $\widetilde{\pi}_{0}$ to $\widetilde{\pi}_{1}$. Thus, every holomorphic Poisson manifolds $(X,\pi)$  defines a formal pointed 
groupoid \[\Del_{(X,\pi)}:\Art_{\C}\rightarrow\Grpd,\]
sending $A$ to the groupoid whose objects are deformations of $(X,\pi)$ over $A$ and whose arrows are equivalences between them; similarly it defines  a formal set $\Def_{(X,\pi)}:\Art_{\C}\rightarrow\Set:A\rightarrow\pi_{0}(\Del_{(X,\pi)}(A))$, sending $A$ to the set of equivalence classes of deformations of $(X,\pi)$ over $A$.
\end{definition}

\begin{remark} Equivalently a deformation of $(X,\pi)$ over $A$ could be defined as a sheaf $\Oh_{\sX}$ of flat Poisson $A$-algebras on $X$ (by that we mean flat $A$-algebras equipped with an $A$-bilinear Poisson bracket) and a sheaves of Poisson $A$-algebras morphism $\Oh_{\sX}\rightarrow\Oh_{X}$ locally isomorphic to the projection $\Oh_{X}\otimes A\rightarrow\Oh_{X}$ (with the Poisson structure on $\Oh_{X}\otimes A$ given by scalar extension).
\end{remark}

In order to exhibit a differential graded Lie algebra governing the above deformation problem we are going to apply descent of Deligne groupoids (Theorem~\ref{thm.hinichdescent}); to this end  we consider 
the sheaf $\bigwedge^{\geq1}\Theta_{X}[1]$ of sub DGLAs of $\bigwedge^{\ast}\Theta_{X}[1]$.
Fixing an open Stein  covering $\sU=\{U_{i}\}$, consider the non-negatively graded semicosimplicial DGLA
\[\xymatrix{\bigwedge^{\geq1}\Theta_{X}[1](\sU)_{\bullet}:\quad
\displaystyle\prod_{i}^{\phantom{i}}{\bigwedge}^{\geq1}\Theta_{X}[1](U_{i})\ar@<0.65ex>[r]\ar@<-0.65ex>[r]&
\displaystyle\prod_{i,j}^{\phantom{i}}{\bigwedge}^{\geq1}\Theta_{X}[1](U_{ij})\ar@<1.00ex>[r]\ar[r]\ar@<-1.00ex>[r]&
\displaystyle\prod_{i,j,k}^{\phantom{i}}{\bigwedge}^{\geq1}\Theta_{X}[1](U_{ijk})\cdots}\]
with the usual \v{C}ech face operators given by restriction. The main result of this section is:

\begin{theorem}\label{thm.pdef} The totalization
$\Tot(\bigwedge^{\geq1}\Theta_{X}[1](\sU)_{\bullet})$ governs the deformations of $(X,\pi)$; more precisely
there exists an equivalence of formal pointed groupoids:
\[\Del_{\Tot(\bigwedge^{\geq1}\Theta_{X}[1](\sU)_{\bullet})}\simeq\Del_{(X,\pi)}\;.\]
\end{theorem}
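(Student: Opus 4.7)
The plan is to invoke Hinich's descent theorem and then match the resulting totalized Deligne groupoid with $\Del_{(X,\pi)}$ by explicit \v{C}ech-style reconstruction. Since $\bigwedge^{\geq 1}\Theta_X[1]$ is concentrated in non-negative degrees (the degree-$k$ piece being $\bigwedge^{k+1}\Theta_X$), the semicosimplicial DGLA $\bigwedge^{\geq 1}\Theta_X[1](\sU)_\bullet$ is non-negatively graded, so Theorem~\ref{thm.hinichdescent} applies and yields a natural equivalence
\[ \Del_{\Tot(\bigwedge^{\geq 1}\Theta_X[1](\sU)_\bullet)} \;\simeq\; \Tot(\Del_{\bigwedge^{\geq 1}\Theta_X[1](\sU)_\bullet})\,. \]
It therefore suffices to construct a natural equivalence $\Tot(\Del_{\bigwedge^{\geq 1}\Theta_X[1](\sU)_\bullet})\simeq\Del_{(X,\pi)}$.

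Unwinding the description of totalizations of semicosimplicial groupoids from Section~\ref{sec.Deligne and totalization}, an object of $\Tot(\Del_{\bigwedge^{\geq 1}\Theta_X[1](\sU)_\bullet})(A)$ over $A\in\Art_{\C}$ is a pair $(\{x_i\},\{e^{\eta_{ij}}\})$, where $x_i\in\bigwedge^2\Theta_X(U_i)\otimes\mathfrak{m}_A$ satisfies the Maurer-Cartan equation $[\pi,x_i]_{SN}+\tfrac{1}{2}[x_i,x_i]_{SN}=0$ (equivalently, $\pi_i:=\pi|_{U_i}+x_i$ is a Poisson bivector on the trivial deformation $U_i\times\Spec A$), and $e^{\eta_{ij}}\in\exp(\Theta_X(U_{ij})\otimes\mathfrak{m}_A)$ are gauge elements satisfying $e^{\eta_{ij}}\ast x_j|_{U_{ij}}=x_i|_{U_{ij}}$ together with the \v{C}ech cocycle identity $e^{\eta_{ij}}e^{\eta_{jk}}=e^{\eta_{ik}}$ on $U_{ijk}$. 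I would then define a functor $\Phi_A$ from this totalized groupoid to $\Del_{(X,\pi)}(A)$ by sending such an object to the deformation $\sX$ of $X$ obtained by gluing the trivial local deformations $\Oh_{U_i}\otimes A$ along $\{e^{\eta_{ij}}\}$, equipped with the global Poisson bivector $\tilde{\pi}\in H^0(X,\bigwedge^2\Theta_{\sX/A})$ patched from the local $\pi_i$. The gauge equation $e^{\eta_{ij}}\ast x_j=x_i$ is precisely the condition that the transition automorphism $e^{\ad\eta_{ij}}$ on $\bigwedge^{\ast}\Theta_{\sX/A}$, recalled just before Definition~\ref{def.deformationPoisson}, intertwines $\pi_j$ and $\pi_i$, which is exactly what is required for the local $\pi_i$ to patch into a well-defined global section, while the local Maurer-Cartan equations yield the integrability $[\tilde{\pi},\tilde{\pi}]_{SN}=0$.

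For the quasi-inverse, given a deformation $(\sX,\tilde{\pi})$, I would pick local trivializations $\Oh_{\sX}|_{U_i}\cong\Oh_{U_i}\otimes A$ reducing modulo $\mathfrak{m}_A$ to the identity; these produce both the transition cocycle $\{e^{\eta_{ij}}\}$ and the Maurer-Cartan elements $x_i:=\tilde{\pi}|_{U_i}-\pi|_{U_i}$, giving essential surjectivity, while different choices of trivializations yield objects related by morphisms in the totalized groupoid. The main technical obstacle is the full-faithfulness check on morphisms: a morphism $\{e^{\alpha_i}\}$ with $e^{\alpha_i}\in\exp(\Theta_X(U_i)\otimes\mathfrak{m}_A)$ in the totalized Deligne groupoid must correspond bijectively, under $\Phi_A$, to an isomorphism $\Oh_{\sX_0}\to\Oh_{\sX_1}$ of deformations intertwining the global Poisson bivectors. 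This is a standard but intricate \v{C}ech-style verification: the semicosimplicial compatibility of $\{e^{\alpha_i}\}$ translates into compatibility of transition cocycles, and the gauge-action equations $e^{\alpha_i}\ast x_i^0=x_i^1$ capture the preservation of the global Poisson bivector, so that naturality in $A$ follows formally from the construction.
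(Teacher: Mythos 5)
Your proposal is correct and follows essentially the same route as the paper: apply Hinich's descent theorem to reduce to the totalized semicosimplicial Deligne groupoid, identify its objects with \v{C}ech descent data $(\{\sigma_i\},\{e^{\eta_{ij}}\})$ satisfying Maurer--Cartan, gauge-compatibility, and cocycle conditions, and glue these to a deformation $(\sX,\widetilde{\pi})$. The paper is equally brief on the full-faithfulness of the resulting functor (``a similar argument shows that equivalences of descent data correspond to equivalences of the associated deformations''), so your flagging of that step as the remaining routine verification matches the level of detail in the original.
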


\begin{proof} Given $A\in\Art_{\C}$ and a deformation $(\sX,\widetilde{\pi})$ of $(X,\pi)$ over $A$, since
$\sX$ trivializes over each $U_{i}$ we have  $\bigwedge^{2}\Theta_{\sX/A}(U_{i})\cong\bigwedge^{2}\Theta_{X}(U_{i})\otimes A$ and then
 we can write $\widetilde{\pi}_{|U_{i}}=\pi_{|U_{i}}+\sigma_{i}$ with
$\sigma_{i}\in\bigwedge^{2}\Theta_{X}(U_{i})\otimes\mathfrak{m}_{A}$.
Then $[\widetilde{\pi}_{|U_{i}}, \widetilde{\pi}_{|U_{i}}]_{SN}=0$ is equivalent to $[\pi_{|U_{i}},\sigma_{i}]_{SN}+\frac{1}{2}[\sigma_{i},\sigma_{i}]_{SN}=0$, i.e., $\sigma_{i}$ is a solution to the Maurer-Cartan equation in the DGLA $\bigwedge^{\geq1}\Theta_{X}[1](U_{i})\otimes\mathfrak{m}_{A}$. If the $e^{\eta_{ij}}$ are transition automorphisms for $\sX$ over $\sU$ on double intersections
we have $e^{\ad\,\eta_{ij}}(\pi_{|U_{ij}}+\sigma_{j|U_{ij}})=\pi_{|U_{ij}}+\sigma_{i|U_{ij}}$,
i.e., $e^{\eta_{ij}}\ast\sigma_{j|U_{ij}}=\sigma_{i|U_{ij}}$ in the DGLA $\bigwedge^{\geq1}\Theta_{X}[1](U_{ij})\otimes\mathfrak{m}_{A}$.

On the other hand by Theorem~\ref{thm.hinichdescent} the groupoid $\Del_{\Tot(\bigwedge^{\geq1}\Theta_{X}[1](\sU)_{\bullet})}(A)$ is naturally equivalent to the totalization of the semicosimplicial groupoid:
\[\xymatrix{\displaystyle\prod_{i}^{\phantom{i}}\Del_{\bigwedge^{\geq1}\Theta_{X}[1](U_{i})}(A)\ar@<0.65ex>[r]\ar@<-0.65ex>[r]&
\displaystyle\prod_{i,j}^{\phantom{i}}\Del_{\bigwedge^{\geq1}\Theta_{X}[1](U_{ij})}(A)\ar@<1.00ex>[r]\ar[r]\ar@<-1.00ex>[r]&
\displaystyle\prod_{i,j,k}^{\phantom{i}}\Del_{\bigwedge^{\geq1}\Theta_{X}[1](U_{ijk})}(A)\;\cdots}\]
Objects of this groupoid  are precisely the collections
\[\sigma_{i}\in{\bigwedge}^{2}\Theta_{X}(U_{i})\otimes\mathfrak{m}_{A},\qquad
e^{\eta_{ij}}\in\exp(\Theta_{X}(U_{ij})\otimes\mathfrak{m}_{A}),\]
such that for every $i,j,k$:
\begin{enumerate}
\item every $\sigma_{i}$ is a solution to the Maurer-Cartan equation in
$\bigwedge^{\geq1}\Theta_{X}[1](U_{i})\otimes\mathfrak{m}_{A}$;

\item $e^{\eta_{ij}}\ast\sigma_{j|U_{ij}}=\sigma_{i|U_{ij}}$ in
$\bigwedge^{\geq1}\Theta_{X}[1](U_{ij})\otimes\mathfrak{m}_{A}$;

\item  $e^{\eta_{ij}} e^{\eta_{jk}}=e^{\eta_{ik}}$ in $\exp(\Theta_{X}(U_{ijk})\otimes\mathfrak{m}_{A})$.

\end{enumerate}
The last condition ensures that we can glue via the $e^{\eta_{ij}}$ the trivial deformations over each $U_{i}$
to a global deformation $\sX$ of $X$ over $A$,  the second ensures that the local sections $\widetilde{\pi}_{i}=\pi_{|U_{i}}+\sigma_{i}$
paste to a global section $\widetilde{\pi}$ of $\bigwedge^{2}\Theta_{\sX/A}$, restricting to $\pi$, and the first one ensures that
$[\widetilde{\pi},\widetilde{\pi}]_{SN}=0$, i.e., $(\sX,\widetilde{\pi})$ is a deformation of $(X,\pi)$ over $A$, canonically associated to the descent data. Conversely, by the previous  discussion, every deformation of $(X,\pi)$ over $A$ determines descent data from which it can be reconstructed up to isomorphism.

A similar argument shows that equivalences of descent data correspond to equivalences of the associated deformations, and the other way around, so that it is defined this way a fully faithful, essentially surjective, functor
$\Del_{\Tot(\bigwedge^{\geq1}\Theta_{X}[1](\sU)_{\bullet})}(A)\rightarrow\Del_{(X,\pi)}(A)$.
Naturality of this construction in $A$ is clear.
\end{proof}

\bigskip
\section{Coisotropic deformations}
\label{sec.coisotropicdeformations}

We turn our attention to an important class of submanifolds of a holomorphic Poisson manifold, namely the coisotropic ones. Recall that a multiplicative ideal $I$ of a Poisson algebra $(A,\cdot, \{,\})$ is called coisotropic if it is closed with respect to $\{\cdot,\cdot\}$.

\begin{definition}\label{def.coisotropicsubmanifold}
Let $(X,\pi)$ be a holomorphic Poisson manifold.
A  holomorphic closed submanifold $Z\subset X$ is called coisotropic if its ideal sheaf $\sI_Z$
is  coisotropic in $\Oh_X$.
\end{definition}

For a closed submanifold $Z$ of a complex manifold $X$ we denote by $\sN_{Z|X}$ 
the normal sheaf of $Z$ in $X$ and by  $\bigwedge^{\ast}\sN_{Z|X}:=\bigoplus_{i\geq0}\bigwedge^{i}_{\Oh_{Z}}\sN_{Z|X}[-i]$ its graded exterior algebra. By a little abuse of notation we also denote by  
$\bigwedge^{\ast}\sN_{Z|X}$ its the direct image under the inclusion $Z\subset X$. Thus 
there is a natural epimorphism $\bigwedge^{\ast}\Theta_{X}\rightarrow \bigwedge^{\ast}\sN_{Z|X}$ of sheaves of graded algebras on $X$; we denote by $\sL^{\ast}_{Z}$ its kernel,  noticing that 
$\sL^{0}_{Z}=\sI_Z$ and $\sL^{1}_{Z}=\Theta_X(-\log Z)$.

\begin{proposition}\label{prop.cois}
In the notation above, the kernel
$\sL^{\ast}_{Z}$ is a sheaf of sub Gerstenhaber algebras of $\bigwedge^{\ast}\Theta_{X}$.
Moreover the following conditions are equivalent:\begin{enumerate}
\item  $Z$ is coisotropic;
\item  $\pi\in H^{0}(X;\sL^{2}_{Z})$;
\item $d_{\pi}(\sL^{\ast}_{Z})\subseteq \sL^{\ast}_{Z}$, i.e.,  $\sL^{\ast}_{Z}\subset\bigwedge^{\ast}\Theta_{X}$ is a sheaf of sub DGeAs.
\end{enumerate}
\end{proposition}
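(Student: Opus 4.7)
My plan is to treat the structural claim that $\sL^*_Z$ is a sub DGeA first, and then derive the three equivalences from it. Since $\sL^*_Z$ is by construction the kernel of a morphism of sheaves of graded commutative algebras, it is automatically a graded ideal of $\bigwedge^*\Theta_X$, so closed under the wedge product. For closure under the Schouten-Nijenhuis bracket I would invoke Remark~\ref{remarkideali} and reduce to checking $[\cdot,\cdot]_{SN}$ on a set of homogeneous generators. Working in a local chart $z_1,\ldots,z_n$ with $Z=\{z_1=\cdots=z_k=0\}$, expansion in the basis $\partial/\partial z_I$ identifies $\sL^*_Z$ with the ideal generated by $\sL^0_Z\cup\sL^1_Z=\sI_Z\cup\Theta_X(-\log Z)$. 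One then checks the three relevant bracket types directly: $[\sI_Z,\sI_Z]_{SN}=0$ by degree; $[\xi,f]_{SN}=\xi(f)\in\sI_Z$ whenever $\xi\in\Theta_X(-\log Z)$ and $f\in\sI_Z$, since a vector field tangent to $Z$ preserves its ideal; and $[\Theta_X(-\log Z),\Theta_X(-\log Z)]_{SN}\subseteq\Theta_X(-\log Z)$ is the classical Frobenius closure of tangent vector fields under Lie bracket.

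For the equivalence (1) $\Leftrightarrow$ (2) I would use the formula $\{f,g\}_\pi=\bi_\pi(df\wedge dg)$ together with the conormal identification $\sI_Z/\sI_Z^{2}\cong\sN_{Z|X}\dual$. For $f,g\in\sI_Z$ the differentials $df|_Z$ and $dg|_Z$ are sections of $\sN_{Z|X}\dual$, and a short computation shows $\{f,g\}_\pi\equiv\bar\pi(df|_Z,dg|_Z)\pmod{\sI_Z}$, where $\bar\pi$ is the image of $\pi$ in $\bigwedge^2\sN_{Z|X}$. Since the differentials of elements of $\sI_Z$ locally generate $\sN_{Z|X}\dual$, the coisotropy condition $\{\sI_Z,\sI_Z\}_\pi\subseteq\sI_Z$ is equivalent to $\bar\pi=0$, i.e., to $\pi\in H^0(X,\sL^2_Z)$.

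The equivalence (2) $\Leftrightarrow$ (3) is then immediate in the forward direction, since $d_\pi=[\pi,\cdot]_{SN}$ and the first step has shown $\sL^*_Z$ to be bracket-closed. The reverse direction is the subtle point and I would handle it by examining the hypothesis in its lowest degree: $d_\pi(\sI_Z)=d_\pi(\sL^0_Z)\subseteq\sL^1_Z$, and for $f\in\sI_Z$ the Schouten formula gives $d_\pi(f)=[\pi,f]_{SN}=\pm\pi\contr df$, whose projection to $\sN_{Z|X}$ equals (up to sign) $\bar\pi\contr df|_Z$. Vanishing of this projection for every $f\in\sI_Z$ forces $\bar\pi$ to annihilate the generating family $\{df|_Z:f\in\sI_Z\}$ of $\sN_{Z|X}\dual$, so $\bar\pi=0$ and (2) holds. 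The main technical obstacle is really the bookkeeping in the first step: the local identification of $\sL^*_Z$ as the ideal generated by $\sL^0_Z\cup\sL^1_Z$, which underlies both the application of Remark~\ref{remarkideali} and the identification of $d_\pi(f) \bmod \sL^1_Z$ with $\bar\pi\contr df|_Z$ in the last argument.
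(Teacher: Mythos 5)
Your proof is correct and follows essentially the same route as the paper's: identify $\sL^{\ast}_{Z}$ locally as the multiplicative ideal generated by $\sL^{0}_{Z}\cup\sL^{1}_{Z}$, apply Remark~\ref{remarkideali} to reduce bracket-closure to the degree $\leq 1$ part, and detect coisotropy through the image of $\pi$ in $\bigwedge^{2}\sN_{Z|X}$ (which the paper writes in coordinates as $\pi_{ij}=-\{z_{i},z_{j}\}_{\pi}\in\sI_{Z}$). The only differences are cosmetic: you phrase the last step invariantly via the conormal pairing instead of coordinates, and you spell out the implication $(3)\Rightarrow(2)$ from the degree-zero part of the hypothesis, which the paper leaves implicit.
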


\begin{proof} We firts prove that $\sL^*_Z$ is generated, as a multiplicative ideal of $\bigwedge^*\Theta_X$ by $\sL^0_Z$ and $\sL^1_Z$: choosing a system of holomorphic coordinates 
$z_{1},\ldots,z_{n}$ such that $Z=\{z_{1}=\cdots=z_{p}=0\}$, we have 
$\dfrac{\de}{\de z_i}\in \sL^1_Z$ if and only if $i>p$, while
a polyvector field
\[\xi=\sum_{1\leq i_{1}<\cdots< i_{k}\leq n }\xi_{i_{1}\cdots i_{k}}
\frac{\de}{\de z_{i_{1}}}\wedge\cdots\wedge\frac{\de}{\de z_{i_{k}}}\]
belongs to $\sL^*_Z$ if and only if $\xi_{i_{1}\cdots i_{k}}\in\sI_{Z}$ whenever $1\leq i_{1}<\cdots< i_{k}\leq p$.

The first part of the proposition amounts to show that $\sL^{\ast}_{Z}$ is $[\cdot,\cdot]_{SN}$-closed; by
Remark~\ref{remarkideali} 
this is equivalent to the fact that $\sL^{\leq1}_{Z}$ is $[\cdot,\cdot]_{SN}$-closed, 
which is clear since
$\Theta_{X}(-\log Z)\subset\Theta_{X}$ is by definition the sub Lie algebra of derivations sending 
$\sI_{Z}$ into itself.

If  $\pi\in H^0(X,\sL^{2}_{Z})$ and $f,g\in\sI_{Z}=\sL^{0}_{Z}$ then also 
$\{f,g\}_{\pi}=[[\pi,f]_{SN},g]_{SN}\in\sI_{Z}$, so $Z$ is coisotropic and  
$\sL^{\ast}_{Z}\subset\bigwedge^{\ast}\Theta_{X}$ is a sub DGeA. 
To prove the converse let's write in local coordinates  
\[\pi=\sum_{1\leq i<j\leq n}\pi_{ij}\frac{\de}{\de z_{i}}\wedge\frac{\de}{\de z_{j}},\qquad
\pi_{ij}=-\{z_{i},z_{j}\}_{\pi}.\]
If $Z=\{z_1=\cdots=z_p=0\}$ is coisotropic then  $z_{i},z_{j}\in\sI_{Z}$ for 
$1\leq i<j\leq p$ and then  $\pi_{ij}=-\{z_{i},z_{j}\}_{\pi}\in\sI_{Z}$ 
for $1\leq i<j\leq p$; this implies that  $\pi$ is a section of $\sL^2_{Z}$.
\end{proof}

We shall first study coisotropic deformations of $(X,Z,\pi)$, recall that for $A\in\Art_{\C}$ a deformation of $(X,Z)$ seen as a pair (complex manifold, complex submanifold) over $A$ can be described as a deformation $\sX$ of $X$ together with a sheaf $\sI_{\sZ}\subset\Oh_{\sX}$ of $A$-flat ideals and a morphism:
\[\xymatrix{\sI_{\sZ}\ar@{^{(}->}[r]\ar[d]&\Oh_{\sX}\ar[d]\\ \sI_{Z}\ar@{^{(}->}[r]&\Oh_{X}}\]
of pairs of sheaves of $A$-algebras locally isomorphic to:
\[\xymatrix{\sI_{Z}\otimes A\ar@{^{(}->}[r]\ar[d]&\Oh_{X}\otimes A\ar[d]\\ \sI_{Z}\ar@{^{(}->}[r]&\Oh_{X}}\]
Equivalences are isomorphisms of pairs over $\sI_{Z}\hookrightarrow\Oh_{X}$,  the group of self-equivalences of the trivial deformation is naturally isomorphic to $\exp(H^{0}(X;\Theta_{X}(-\log Z))\otimes\mathfrak{m}_A)$, where
$\Theta_{X}(-\log Z)=\{\eta\in\Theta_{X}\mid \eta(\sI_{Z})\subset\sI_{Z}\}$ 
is the sheaf of vector fields tangent everywhere to $Z$. Again given a covering $\sU$ of $X$ by Stein open sets and a deformation of $(X,Z)$ as above it trivializes over each $U_{i}$, so that it can be reconstructed up to isomorphism by the family of transition automorphisms $e^{\eta_{ij}}\in\exp(\Theta_{X}(-\log Z)(U_{ij})\otimes\mathfrak{m}_A)$ satisfying the cocycle condition on triple intersections.

\begin{definition}\label{def2}
Given a holomorphic Poisson manifold $(X,\pi)$ and a coisotropic submanifold $Z\subset X$, a coisotropic deformation of the triple $(X,Z,\pi)$ is a deformation $(\sX,\widetilde{\pi})$ of $(X,\pi)$ equipped with a sheaf of coisotropic ideals $\sI_{\sZ}\subset\Oh_{\sX}$ such that $\sI_{\sZ}\hookrightarrow\Oh_{\sX}$ is a deformation of the pair $(X,Z)$. Together with the obvious notion of isomorphism, the deformations over a fixed basis have a natural groupoid structure. We denote by
\[\Del^{co}_{(X,Z,\pi)}\colon\Art_{\C}\rightarrow\Grpd,\qquad \Def^{co}_{(X,Z,\pi)}\colon\Art_{\C}\rightarrow\Set\,,\]
the associated formal pointed groupoid and functor of Artin rings.
\end{definition}

We take $\sU$ as usual and proceed as in the proof of Theorem~\ref{thm.pdef}, this time considering the non negatively graded semicosimplicial DGLA $\sL^{\geq1}_{Z}[1](\sU)_{\bullet}$.

\begin{theorem} There is an equivalence of formal groupoids:
\[\Del_{\Tot(\sL^{\geq1}_{Z}[1](\sU)_{\bullet})}\simeq\Del^{co}_{(X,Z,\pi)}\;.\]
\end{theorem}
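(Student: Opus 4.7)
The plan is to mimic almost verbatim the proof of Theorem~\ref{thm.pdef}, replacing the semicosimplicial sub-DGLA $\bigwedge^{\ge 1}\Theta_X[1](\sU)_\bullet$ with its sub-DGLA $\sL^{\ge 1}_Z[1](\sU)_\bullet$; this is legitimate because, by Proposition~\ref{prop.cois}, $\sL^{\ge 1}_Z$ is indeed a sheaf of sub-DGLAs of $\bigwedge^{\ge 1}\Theta_X$ (with the Lichnerowicz--Poisson differential $d_\pi$ and Schouten--Nijenhuis bracket). Applying Hinich's descent theorem (Theorem~\ref{thm.hinichdescent}), $\Del_{\Tot(\sL^{\ge 1}_Z[1](\sU)_\bullet)}(A)$ is naturally equivalent to the totalization of the semicosimplicial Deligne groupoid $\Del_{\sL^{\ge 1}_Z[1](\sU)_\bullet}(A)$.

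Unwinding the definition of the totalization, an object of this groupoid is a pair of collections $(\sigma_i, e^{\eta_{ij}})$ with $\sigma_i\in \sL^2_Z(U_i)\otimes\mathfrak{m}_A$ solving the Maurer--Cartan equation in $\sL^{\ge 1}_Z[1](U_i)\otimes\mathfrak{m}_A$, and $e^{\eta_{ij}}\in\exp(\sL^1_Z(U_{ij})\otimes\mathfrak{m}_A)=\exp(\Theta_X(-\log Z)(U_{ij})\otimes\mathfrak{m}_A)$ satisfying the gauge relation $e^{\eta_{ij}}\ast\sigma_{j|U_{ij}}=\sigma_{i|U_{ij}}$ and the cocycle condition on triple overlaps. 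Next I would read off the deformation piece by piece: the cocycle $\{e^{\eta_{ij}}\}$, taking values in $\Theta_X(-\log Z)$, glues the trivial deformations of the pairs $(U_i,U_i\cap Z)$ into a global deformation $\sI_\sZ\hookrightarrow\Oh_\sX$ of $(X,Z)$, because $\Theta_X(-\log Z)$ is by definition the sheaf of infinitesimal automorphisms preserving the ideal sheaf. Exactly as in Theorem~\ref{thm.pdef}, the local pieces $\tilde\pi_i:=\pi_{|U_i}+\sigma_i$ paste through the $e^{\ad\,\eta_{ij}}$ to a global bivector $\tilde\pi\in H^0(X,\bigwedge^2\Theta_{\sX/A})$ restricting to $\pi$ and satisfying $[\tilde\pi,\tilde\pi]_{SN}=0$. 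The new ingredient is coisotropy: since $\sigma_i\in\sL^2_Z(U_i)\otimes\mathfrak{m}_A$, one has $\tilde\pi_i\in\sL^2_Z(U_i)\otimes A$ locally, and via Proposition~\ref{prop.cois} (applied fibrewise over $\operatorname{Spec} A$) this is equivalent to $\sI_\sZ$ being a coisotropic ideal in the Poisson $A$-algebra $\Oh_\sX$. Hence the descent data determines a coisotropic deformation of $(X,Z,\pi)$.

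In the opposite direction, starting from a coisotropic deformation $(\sX,\sI_\sZ,\tilde\pi)$, one trivializes the pair $(\sX,\sI_\sZ)$ over each $U_i$ (so that transition automorphisms lie in $\exp(\Theta_X(-\log Z)(U_{ij})\otimes\mathfrak{m}_A)$ rather than merely in $\exp(\Theta_X(U_{ij})\otimes\mathfrak{m}_A)$), writes $\tilde\pi_{|U_i}=\pi_{|U_i}+\sigma_i$, and observes that coisotropy forces $\sigma_i\in\sL^2_Z(U_i)\otimes\mathfrak{m}_A$ by the same application of Proposition~\ref{prop.cois}. A parallel analysis for morphisms, using that self-equivalences of the trivial deformation of $(X,Z)$ are parametrized by $\exp(H^0(X;\Theta_X(-\log Z))\otimes\mathfrak{m}_A)=\exp(H^0(X;\sL^1_Z)\otimes\mathfrak{m}_A)$, shows that morphisms of descent data correspond bijectively to equivalences of coisotropic deformations. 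This defines a fully faithful and essentially surjective functor $\Del_{\Tot(\sL^{\ge 1}_Z[1](\sU)_\bullet)}(A)\to\Del^{co}_{(X,Z,\pi)}(A)$, natural in $A$.

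The main obstacle, although entirely parallel to what one does in the Poisson case, is the local trivialization statement for deformations of the pair $(X,Z)$: namely, that every such deformation is locally isomorphic to the trivial one, so that transition automorphisms can genuinely be represented by exponentials of sections of $\Theta_X(-\log Z)$ (and not merely of $\Theta_X$). Once this is invoked, the rest is a careful but routine translation between the descent data produced by Theorem~\ref{thm.hinichdescent} on the $\sL^{\ge 1}_Z$-side and the geometric data of a coisotropic deformation; the key conceptual input supplied by Proposition~\ref{prop.cois} is precisely the equivalence between the membership $\tilde\pi\in\sL^2_Z$ and the coisotropy of $\sI_\sZ$.
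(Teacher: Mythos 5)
Your proposal is correct and follows essentially the same route as the paper: the authors likewise reduce to the argument of Theorem~\ref{thm.pdef} via Hinich descent, with the single new observation that a Maurer--Cartan solution $\sigma_i$ makes $\sI_Z(U_i)\otimes A$ coisotropic for $\widetilde{\pi}_i=\pi_{|U_i}+\sigma_i$ if and only if $\sigma_i\in\sL^2_Z(U_i)\otimes\mathfrak{m}_A$, proved as in Proposition~\ref{prop.cois}. The local trivializability of deformations of the pair $(X,Z)$ over Stein opens, which you flag as the remaining obstacle, is exactly the point the paper settles in the discussion preceding Definition~\ref{def2}.
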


\begin{proof} This is proved in the same way as in Theorem~\ref{thm.pdef}: 
the descent data for the semicosimplicial groupoid:
\[\xymatrix{\displaystyle\prod_{i}^{\phantom{i}}\Del_{\sL^{\geq1}_{Z}[1](U_{i})}(A)\ar@<0.65ex>[r]\ar@<-0.65ex>[r]&
\displaystyle\prod_{i,j}^{\phantom{i}}\Del_{\sL^{\geq1}_{Z}[1](U_{ij})}(A)\ar@<1.00ex>[r]\ar[r]\ar@<-1.00ex>[r]&
\displaystyle\prod_{i,j.k}^{\phantom{i}}\Del_{\sL^{\geq1}_{Z}[1](U_{ijk})}(A)\;\cdots}\]
can be glued to a deformation $\sI_{\sZ}\hookrightarrow\Oh_{\sX}$ of $(X,Z)$ and a deformation $\widetilde{\pi}$ of $\pi$ and conversely every deformation determines descent data from which it can be reconstructed up to isomorphism. The only thing that has to be observed is that given a solution $\sigma_{i}$ to the Maurer-Cartan equation in $\bigwedge^{\geq1}\Theta_{X}(U_{i})\otimes\mathfrak{m}_{A}$, in order for $\sI_{Z}(U_{i})\otimes A\subset\Oh_{X}(U_{i})\otimes A$ to be a coisotropic ideal with respect to the bracket induced by $\widetilde{\pi}_{i}=\pi_{|U_{i}}+\sigma_{i}$, it is necessary and sufficient that
$\sigma_{i}\in\sL^{2}_{Z}(U_{i})\otimes\mathfrak{m}_A$: this is shown as in the proof of 
Proposition~\ref{prop.cois}.
\end{proof}

As an application of the above result we are able to give the analog of Kodaira's stability theorem for coisotropic submanifolds.

\begin{corollary}[Stability of coisotropic submanifolds]\label{cor.stabilitycoisotropic}
Let $(X,\pi)$ be a compact holomorphic Poisson manifold and let $Z$ be a coisotropic submanifold.
Consider the complex of sheaves
\[ {\bigwedge}^{\ge 1}\sN_{Z|X}[1]:\qquad
\sN_{Z|X}\xrightarrow{d_{\pi}}{\bigwedge}^2 \sN_{Z|X}\xrightarrow{d_{\pi}}\cdots\]
where ${\bigwedge}^i \sN_{Z|X}$ is considered in degree $i-1$.
Let $\sX\to (B,0)$ be a Poisson deformation of $(X,\pi)$ over a germ of complex space $(B,0)$.
If $\H^1(Z,{\bigwedge}^{\ge 1}\sN_{Z|X}[1])=0$ then,
after a possible shrinking of $B$, there exists a  family of coisotropic submanifolds
$\sZ\subset \sX$  which is smooth over $B$ and such that $\sZ_0=Z$.
\end{corollary}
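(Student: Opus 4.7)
The plan is to interpret both embedded coisotropic deformations and Poisson deformations via the DGLAs established in the previous sections, and then to lift the given Poisson deformation to a coisotropic one using smoothness of the forgetful morphism. Fix an open Stein cover $\sU$ of $X$ and set
\[L := \Tot\bigl(\sL^{\geq 1}_Z[1](\sU)_\bullet\bigr),\quad M := \Tot\bigl({\bigwedge}^{\geq 1}\Theta_X[1](\sU)_\bullet\bigr),\quad N := \Tot\bigl({\bigwedge}^{\geq 1}\sN_{Z|X}[1](\sU)_\bullet\bigr),\]
governing respectively the coisotropic deformations of $(X,Z,\pi)$, the Poisson deformations of $(X,\pi)$, and the ``normal quotient'' problem. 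By Proposition~\ref{prop.cois} the sheaf $\sL^{\geq 1}_Z$ is a sub DGeA of $\bigwedge^{\geq 1}\Theta_X$, hence the quotient $\bigwedge^{\geq 1}\sN_{Z|X}[1]$ inherits a DGLA structure and one obtains a short exact sequence of sheaves of DGLAs
\[0 \longrightarrow \sL^{\geq 1}_Z[1] \longrightarrow {\bigwedge}^{\geq 1}\Theta_X[1] \longrightarrow {\bigwedge}^{\geq 1}\sN_{Z|X}[1] \longrightarrow 0.\]
Applying the exact functor $\Tot$ to the associated semicosimplicial \v{C}ech resolutions gives a short exact sequence of DGLAs $0\to L\to M\to N\to 0$, the inclusion $L\hookrightarrow M$ corresponding to the forgetful morphism from coisotropic to Poisson deformations.

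The hypothesis $\H^1(Z,\bigwedge^{\geq 1}\sN_{Z|X}[1])=0$ translates, via the identification of Example~\ref{ex.coomologiadelTotversusHypercohomology}, into $H^1(N)=0$. The associated long exact sequence
\[\cdots\to H^1(L)\to H^1(M)\to H^1(N)\to H^2(L)\to H^2(M)\to\cdots\]
therefore shows that $H^1(L)\to H^1(M)$ is surjective and that $H^2(L)\to H^2(M)$ is injective. By the standard tangent-obstruction criterion for morphisms of deformation functors controlled by morphisms of DGLAs (cf. \cite{ManettiSeattle}), the forgetful natural transformation $\Def^{co}_{(X,Z,\pi)}\to \Def_{(X,\pi)}$ is formally smooth at the base point: every first-order Poisson deformation lifts to a first-order coisotropic deformation, and the obstruction to coisotropic lifting vanishes whenever the obstruction to Poisson deformation does.

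To descend from this formal statement to the analytic family over the germ $(B,0)$, I would invoke the representability of the relevant moduli problems. Douady's construction provides a relative Douady space parametrizing compact analytic subspaces of $\sX/B$, and the condition of being coisotropic with respect to $\widetilde{\pi}$ is a closed analytic condition, cutting out a closed analytic subspace $\Hilb^{co}(\sX/B)$ whose fiber over $0$ contains the point $[Z]$. Formal smoothness of $\Def^{co}_{(X,Z,\pi)}\to \Def_{(X,\pi)}$ at $[Z]$ translates, thanks to the pro-representability of both functors by the local rings at $[Z]$ of these analytic germs, into smoothness of the projection $\Hilb^{co}(\sX/B)\to B$ in a neighborhood of $[Z]$; this projection therefore admits a section after a suitable shrinking of $B$, and the corresponding universal subspace is the desired flat family $\sZ\subset \sX$ with $\sZ_0=Z$.

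The main obstacle I anticipate is precisely this passage from formal to analytic deformations: the DGLA machinery of Sections~\ref{sec.Deligne and totalization}--\ref{sec.coisotropicdeformations} cleanly delivers formal smoothness, but upgrading it to an actual section over the possibly non-smooth germ $(B,0)$ requires either the representability input sketched above or a direct Kuranishi-type convergence argument based on $L_\infty$-transfer applied to $L\to M$ and elliptic estimates on $X$.
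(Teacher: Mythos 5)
Your proof is correct and follows essentially the same route as the paper: reduction to a fat point via the relative Douady space and Artin's theorem (the paper cites Horikawa's argument), then smoothness of the forgetful morphism $\Def^{co}_{(X,Z,\pi)}\to\Def_{(X,\pi)}$ deduced from the long exact hypercohomology sequence of $0\to\sL^{\geq1}_Z[1]\to\bigwedge^{\geq1}\Theta_X[1]\to\bigwedge^{\geq1}\sN_{Z|X}[1]\to0$ together with the standard criterion that surjectivity on $H^1$ and injectivity on $H^2$ imply smoothness. One cosmetic remark: the quotient $\bigwedge^{\geq1}\sN_{Z|X}[1]$ need not inherit a DGLA structure, since $\sL^{\geq1}_Z[1]$ is only a Lie subalgebra and not a Lie ideal, but your argument uses it only as a complex of sheaves, so nothing is affected.
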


\begin{proof} Following the same standard argument used in the proof of Theorem~8.1 of \cite{Horidhm3}, involving relative Douady space and
Artin's theorem on the solution of analytic equations, it is not restrictive to assume $B$ a fat point, i.e.,
$B=\Spec(A)$ for some $A\in\Art_{\C}$.
Thus the stability theorem is proved whenever we show that the natural transformation of functors of Artin rings
\[ \Def^{co}_{(X,Z,\pi)}\mapor{\eta} \Def_{(X,\pi)}\]
is smooth; fixing an open Stein covering $\sU=\{U_i\}$ of $X$,
the above natural transformation is induced by the inclusion of differential graded
Lie algebras
\[ \Tot(\sL^{\geq1}_{Z}[1](\sU)_{\bullet})\mapor{i}\Tot({\bigwedge}^{\geq1}\Theta_{X}[1](\sU)_{\bullet}).\]
According to standard smoothness criterion, see e.g. \cite{ManettiRendiconti}, the morphism
$\eta$ is smooth whenever $i$ is surjective on $H^1$ and injective on $H^2$.
By the definition of $\sL^{\geq1}_{Z}$ we have an exact sequence of complexes of coherent sheaves
\[ 0\to \sL^{\geq1}_{Z}[1]\to {\bigwedge}^{\geq1}\Theta_{X}[1]\to {\bigwedge}^{\ge 1}\sN_{Z|X}[1]\to 0,\]
and the conclusion follows from Example~\ref{ex.coomologiadelTotversusHypercohomology} and
hypercohomology long exact sequence.
\end{proof}

Finally recall that for a complex manifold $X$ and a complex submanifold $Z\subset X$ the local Hilbert functor $\Hilb_{Z|X}\colon\Art_{\C}\rightarrow\Set$ sends $A$ to the set of sheaves of $A$-flat ideals $\sI_{\sZ}\subset\Oh_{X}\otimes A$ such that $\sI_{\sZ}\otimes_{A}\C=\sI_{Z}$; in other terms, $\Hilb_{Z|X}$  
is the functor of formal embedded deformations of $Z$ in $X$. If $X$ is Stein then every $\sI_{\sZ}\hookrightarrow\Oh_{X}\otimes A$ is isomorphic as a pair to 
$\sI_{Z}\otimes A\hookrightarrow\Oh_{X}\otimes A$, i.e., there exists $\eta\in H^{0}(X;\Theta_{X})\otimes\mathfrak{m}_A$ for which $\sI_{\sZ}=e^{\eta}(\sI_{Z}\otimes A)$.

\begin{definition}\label{def.coisotropicHilb} 
Given  a holomorphic Poisson manifold $(X,\pi)$ and  a coisotropic submanifold $Z\subset X$,
the local coisotropic Hilbert functor of $X$ in $Z$ is the functor of Artin rings
$\Hilb^{co}_{Z|X}\colon\Art_{\C}\rightarrow\Set$ sending $A$ to the set of sheaves of $A$-flat coisotropic ideals $\sI_{\sZ}\subset\Oh_{X}\otimes A$ such that $\sI_{\sZ}\otimes_{A}\C=\sI_{Z}$.
\end{definition}

Let $\sK^{\geq1}_{Z}$ be the homotopy fiber  of the inclusion of sheaves of (non negatively graded) DGLAs $\sL^{\geq1}_{Z}[1]\hookrightarrow\bigwedge^{\geq1}\Theta_{X}[1]$ (the notation is a little ambiguous, this is not the non negatively graded part of the homotopy fiber $\sK^{\ast}_{Z}$ of the inclusion $\sL^{\ast}_{Z}[1]\hookrightarrow\bigwedge^{\ast}\Theta_{X}[1]$); for an open covering $\sU$ let 
$\sK^{\geq1}_{Z}(\sU)_{\bullet}$ the associated semicosimplicial DGLA.

\begin{theorem}\label{thm.deformazionicoisotrope} For every open Stein covering $\sU$ of $X$ 
there exists an equivalence of formal pointed groupoids:
\[\Del_{\Tot(\sK^{\geq1}_{Z}(\sU)_{\bullet})}\simeq\Hilb^{co}_{Z|X}\,,\]
where $\Hilb^{co}_{Z|X}$ is regarded  via the natural inclusion
$\Set\rightarrow\Grpd$.
\end{theorem}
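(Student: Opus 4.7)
The proof follows the template of Theorem~\ref{thm.pdef} and the preceding theorem, but replaces the \emph{inclusion}-of-DGLAs description with the \emph{homotopy-fiber} model recorded in Example~\ref{ex.deligneperfibraootopica}. By Theorem~\ref{thm.hinichdescent} applied to $\sK^{\geq 1}_Z(\sU)_\bullet$, the left-hand side is naturally equivalent to $\Tot(\Del_{\sK^{\geq 1}_Z(\sU)_\bullet})(A)$, and since $\sK^{\geq 1}_Z(U_I)$ is by construction the homotopy fiber of the inclusion $\sL^{\geq 1}_Z[1](U_I)\hookrightarrow\bigwedge^{\geq 1}\Theta_X[1](U_I)$, Example~\ref{ex.deligneperfibraootopica} identifies each factor groupoid with a discrete set, namely the quotient of
\[
\{e^{\eta_I}\in\exp(\Theta_X(U_I)\otimes\mathfrak{m}_A)\mid e^{-\eta_I}\ast 0\in\sL^{\geq 1}_Z[1](U_I)\otimes\mathfrak{m}_A\}
\]
by the relation $e^{\eta_I}\sim e^{\eta_I}e^{-\alpha}$ for $\alpha\in\Theta_X(-\log Z)(U_I)\otimes\mathfrak{m}_A=\sL^1_Z(U_I)\otimes\mathfrak{m}_A$. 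Unwinding the totalization we see that descent data amount to a collection of $e^{\eta_i}$ as above on each $U_i$, transition elements $e^{\alpha_{ij}}\in\exp(\Theta_X(-\log Z)(U_{ij})\otimes\mathfrak{m}_A)$ with $e^{\eta_i}|_{U_{ij}}=e^{\eta_j}|_{U_{ij}}e^{-\alpha_{ij}}$, and the usual cocycle relation for the $\alpha_{ij}$ on triple intersections.

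The next step is to match this data with $\Hilb^{co}_{Z|X}(A)$ via the prescription $\sI_\sZ|_{U_i}:=e^{\eta_i}(\sI_Z\otimes A|_{U_i})\subset\Oh_X(U_i)\otimes A$. Since $e^{\alpha_{ij}}$ preserves $\sI_Z\otimes A$ by definition of $\Theta_X(-\log Z)$, the local pieces agree on each double intersection and glue to a global $A$-flat ideal $\sI_\sZ\subset\Oh_X\otimes A$ with $\sI_\sZ\otimes_A\C=\sI_Z$. Conversely, given any $\sI_\sZ\in\Hilb^{co}_{Z|X}(A)$, the Stein hypothesis on $\sU$ allows us to choose local trivializations $\sI_\sZ|_{U_i}=e^{\eta_i}(\sI_Z\otimes A)$; the defect $e^{-\eta_j}e^{\eta_i}$ on $U_{ij}$ then preserves $\sI_Z\otimes A$ and hence produces the required $\alpha_{ij}$, with the cocycle relation inherited from the cocycle for the $e^{\eta_i}$. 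Different choices of local trivializations are precisely related by morphisms of descent data of the form $\beta_i\in\Theta_X(-\log Z)(U_i)\otimes\mathfrak{m}_A$, so the two constructions are mutually inverse, naturally in $A$.

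The main technical point is to show that the gauge condition $e^{-\eta_i}\ast 0\in\sL^{\geq 1}_Z[1](U_i)\otimes\mathfrak{m}_A$ is equivalent to coisotropy of $\sI_\sZ|_{U_i}$. Working in the $d_\pi$-twisted DGLA $\bigwedge^{\geq 1}\Theta_X[1](U_i)$ one obtains the identity
\[
e^{-\eta_i}\ast 0=e^{-\eta_i}_\ast\pi-\pi,
\]
where $e^{-\eta_i}_\ast$ denotes the automorphism of $\bigwedge^{\ast}\Theta_X$ induced by the Lie derivative; since $\pi\in H^0(X,\sL^2_Z)$ by Proposition~\ref{prop.cois}, the gauge condition is equivalent to $e^{-\eta_i}_\ast\pi\in\sL^2_Z(U_i)\otimes A$, i.e.\ to $\sI_Z\otimes A$ being coisotropic on $U_i$ for the Poisson structure $e^{-\eta_i}_\ast\pi$. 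Transporting back via $e^{\eta_i}$ and invoking Proposition~\ref{prop.cois} with coefficients in $A$, this is in turn equivalent to coisotropy of $\sI_\sZ|_{U_i}=e^{\eta_i}(\sI_Z\otimes A)$ for $\pi$ itself. The only genuinely delicate step is this gauge-theoretic identity and the careful bookkeeping that $e^{\eta_i}_\ast$ intertwines $\sL^2_Z$ with $\sL^2_{e^{\eta_i}(Z)}$; once this is granted, the desired equivalence of formal pointed groupoids follows by combining the preceding steps.
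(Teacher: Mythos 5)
Your proposal is correct and follows essentially the same route as the paper: reduce via Hinich descent to the totalization of the local Deligne groupoids, identify each local groupoid through Example~\ref{ex.deligneperfibraootopica} with (the discrete set of) ideals $e^{\eta}(\sI_Z\otimes A)$, and establish the key equivalence between the gauge condition $e^{-\eta}\ast 0\in\sL^{2}_{Z}\otimes\mathfrak{m}_A$ and coisotropy by transporting $\pi$ along the Gerstenhaber automorphism $e^{-\ad\eta}$ and invoking Proposition~\ref{prop.cois}. The only cosmetic difference is that you unwind the totalization into explicit descent data, whereas the paper instead uses the sheaf property of $U\mapsto\Hilb^{co}_{U\cap Z|U}(A)$ to match the two totalizations levelwise; the substance is identical.
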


\begin{proof} We shall first show that $\Del_{\sK^{\geq1}_{Z}(U)}\simeq\Hilb^{co}_{U\bigcap Z|U}$ for a Stein open $U\subset X$, then the theorem will follow from descent of Deligne groupoids as in the previous cases. We saw in Example~\ref{ex.deligneperfibraootopica} that the groupoid $\Del_{\sK^{\geq1}_{Z}(U)}(A)$ admits the following description: objects are $e^{\eta}\in\exp(\Theta_{X}(U)\otimes\mathfrak{m}_A)$ such that $e^{-\eta}\ast0\in\sL^{2}_{Z}(U)\otimes\mathfrak{m}_A$, morphisms between objects $e^{\eta}$, $e^{\xi}$ are $e^{\alpha}\in\exp(\Theta_{X}(-\log Z)(U)\otimes\mathfrak{m}_A)$ such that $e^{\eta}=e^{\xi}e^{\alpha}$; moreover, the natural transformation $\Del_{\Tot(\sK^{\geq1}_{Z}(\sU)_{\bullet})}(A)\rightarrow\Def_{\Tot(\sK^{\geq1}_{Z}(\sU)_{\bullet})}(A)$ is an equivalence of groupoids.

The equivalence $\Del_{\sK^{\geq1}_{Z}(U)}(A)\simeq\Hilb^{co}_{U\bigcap Z|U}(A)$ 
is given on the set of objects sending $e^{\eta}$ to the ideal $e^{\eta}(\sI_{Z\cap U}\otimes A)\subset\Oh_{U}\otimes A$. 
This takes values in $\Hilb^{co}_{U\bigcap Z|U}(A)$, 
in fact applying the Gerstenhaber automorphism $e^{-\ad\,\eta}$ we see  that 
$e^{\eta}(\sI_{Z\cap U}\otimes A)$ 
is coisotropic if and only if $\sI_{Z\cap U}\otimes A$ is coisotropic 
with respect to the bracket induced by $e^{-\ad\,\eta}(\pi_{|U})=\pi_{|U}+e^{-\eta}\ast0$ 
and as in the proof of Proposition~\ref{prop.cois} this is equivalent to 
$e^{-\eta}\ast0\in\sL^{2}_{Z\cap U}\otimes\mathfrak{m}_A$; 
it is a morphism of groupoids as it factors through $\Del_{\Tot(\sK^{\geq1}_{Z}(\sU)_{\bullet})}(A)\rightarrow\Def_{\Tot(\sK^{\geq1}_{Z}(\sU)_{\bullet})}(A)$, since this is in equivalence it remains to show bijectivity of the induced $\Def_{\sK^{\geq1}_{Z}(U)}(A)\rightarrow\Hilb^{co}_{U\bigcap Z|U}(A)$: injectivity is plain, surjectivity follows from $U$ being Stein.

Observe it follows from the definitions that $U\rightarrow\Hilb^{co}_{U\bigcap Z|U}(A)$ is a sheaf of (pointed) sets on $X$, in particular this means that $\Hilb^{co}_{Z|X}(A)$ is in canonical bijective correspondence with the totalization of the semicosimplicial set:
\[\xymatrix{\prod_{i}\Hilb^{co}_{U_{i}\bigcap Z|U_{i}}(A)\ar@<0.65ex>[r]\ar@<-0.65ex>[r]&
\prod_{i,j}\Hilb^{co}_{U_{ij}\bigcap Z|U_{ij}}(A)\ar@<1.00ex>[r]\ar[r]\ar@<-1.00ex>[r]&\prod_{i,j,k}\Hilb^{co}_{U_{ijk}\bigcap Z|U_{ijk}}(A)\cdots}\]
and the first part of the proof gives a natural equivalence between this and the semicosimplicial groupoid:
\[\xymatrix{\prod_{i}\Del_{\sK^{\geq1}_{Z}(U_{i})}(A)\ar@<0.65ex>[r]\ar@<-0.65ex>[r]&
\prod_{i,j}\Del_{\sK^{\geq1}_{Z}(U_{ij})}(A)\ar@<1.00ex>[r]\ar[r]\ar@<-1.00ex>[r]&\prod_{i,j,k}\Del_{\sK^{\geq1}_{Z}(U_{ijk})}(A)\cdots}\]
thus a natural equivalence of the corresponding totalizations. Now Theorem~\ref{thm.hinichdescent} gives the desired natural equivalence $\Del_{\Tot(\sK^{\geq1}_{Z}(\sU)_{\bullet})}(A)\simeq\Hilb^{co}_{Z|X}(A)$.\end{proof}

\bigskip
\section{Coisotropic deformations induced by the anchor map}
\label{sec.anchordeformations}

In order to show that the coisotropic deformations induced by the anchor map are unobstructed we need an algebraic criterion for the homotopy abelianity of certain homotopy fibers, which we think of independent interest.

According to (generalized) Quillen's construction \cite[Prop.~3.3.2]{hinichDGC}, 
two DGLAs are quasi-isomorphic if and only if
they are weak equivalent as $L_{\infty}$-algebras; in particular a differential graded Lie algebra is homotopy abelian if and only if every bracket on its $L_{\infty}$ minimal model \cite{K} vanish.

Let $(L,d,[,])$ be a differential graded Lie algebra and assume that there exists an $L_{\infty}$-morphism
\[ f_{\infty}\colon (L,d,[,])\dashrightarrow (L,d,0),\quad f_\infty=\{f_n\},\quad f_n\colon L^{\wedge n}\to L,\]
with linear part $f_1$ equal to the identity, then $(L,d,[,])$ is homotopy abelian.
Notice that, according to homotopy classification of $L_{\infty}$-algebras \cite{K},
the converse is also true.

\begin{theorem}\label{thm.criterion}
Let $L=(L,d,[,])$ be a differential graded Lie algebra over a field of characteristic $0$.
If there exists a bilinear map $h\colon L\times L\to L$ of degree $-1$ such that
\begin{enumerate}
\item $h(a,b)=-(-1)^{\bar{a}\;\bar{b}}h(b,a)$,
\item\label{item.abelianitycriterion2} $[a,b]=dh(a,b)+h(da,b)+(-1)^{\bar{a}}h(a,db)$,
\item\label{item.abelianitycriterion3} $\displaystyle\oint [h(a,b),c]+\oint h([a,b],c)=0$, where $\oint$ denotes the sum over the cyclic permutations, taking care of Koszul signs.
\end{enumerate}
Then $L$ is homotopy abelian. If $M\subset L$ is a differential graded Lie subalgebra such that
$h(M\times M)\subset M$, then both $M$ and the homotopy fiber of the inclusion $M\subset L$ are homotopy abelian.
\end{theorem}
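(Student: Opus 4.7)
The plan is to construct an explicit $L_{\infty}$-quasi-isomorphism
\[
f_{\infty}\colon (L,d,[\,,\,])\dashrightarrow (L,d,0)
\]
with linear part $f_{1}=\mathrm{id}_{L}$; by the criterion recalled in the paragraph preceding the theorem, this immediately implies that $L$ is homotopy abelian. The three hypotheses on $h$ are designed to be precisely the data needed to produce the Taylor coefficients $f_{n}$.

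The first two Taylor coefficients are essentially forced: set $f_{1}=\mathrm{id}_{L}$ and $f_{2}=-h$ (the overall sign depends on the $L_{\infty}$-morphism conventions, as in \cite{FZ12}). Hypothesis (1) says that $h$ is graded antisymmetric in the original grading, equivalently graded symmetric in the shifted grading, which is the (anti)symmetry required of $f_{2}$ as a Taylor coefficient. Hypothesis (2) is then precisely the $L_{\infty}$-morphism equation at level $n=2$ for a morphism into an abelian DGLA. The $n=3$ equation reduces, after working out the shuffle sums, to an equation of the form $(Df_{3})(a,b,c)=\oint h([a,b],c)$, where $D$ is the natural internal differential on the trilinear Taylor coefficients. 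A direct computation, using hypothesis (2) to expand $dh$ and the Jacobi identity of $L$, shows that the natural candidate $f_{3}(a,b,c)=\tfrac{1}{2}\oint h(h(a,b),c)$ solves this equation, and that the cancellation works precisely because the full cyclic identity (3) is assumed, in which both $\oint [h(a,b),c]$ and $\oint h([a,b],c)$ appear with the right relative sign. The higher Taylor coefficients $f_{n}$, $n\ge 4$, can be defined inductively by iterated cyclic sums involving only $h$, and the resulting $L_{\infty}$ equations reduce to repeated applications of (2), (3), and the Jacobi identity of $L$; this is the standard ``Cartan homotopy'' expansion in the $L_{\infty}$-setting.

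For the subalgebra part: under the assumption $h(M\times M)\subset M$, and since $M$ is a DGLA subalgebra, every $f_{n}$ (built recursively from $h$ and the bracket) restricts to $M$. Hence the restriction $f_{\infty}|_{M}\colon(M,d,[\,,\,])\dashrightarrow(M,d,0)$ is an $L_{\infty}$-quasi-isomorphism with identity linear part, proving that $M$ is homotopy abelian. For the homotopy fiber $K(M\hookrightarrow L)$, the pair $(f_{\infty}|_{M},f_{\infty})$ forms a commutative square of $L_{\infty}$-morphisms whose vertical arrows are $L_{\infty}$-quasi-isomorphisms with identity linear parts; by homotopy invariance of the homotopy fiber construction under $L_{\infty}$-quasi-isomorphisms of arrows, $K(M\hookrightarrow L)$ is quasi-isomorphic to the homotopy fiber of the inclusion of abelian DGLAs $(M,d,0)\hookrightarrow(L,d,0)$, which is the shifted cokernel $(L/M)[-1]$ with trivial bracket, hence abelian. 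The main obstacle in this programme is the inductive construction and verification of the higher Taylor coefficients $f_{n}$ for $n\ge 3$: the $n=2$ and $n=3$ cases follow respectively from hypotheses (2) and (3) by direct computation, but the higher-order cases demand careful Koszul-sign bookkeeping together with iterated combinations of $h$ and $[\,,\,]$.
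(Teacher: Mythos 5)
Your overall strategy --- an explicit $L_{\infty}$-isomorphism $f_{\infty}\colon (L,d,[\,,\,])\dashrightarrow (L,d,0)$ with $f_1=\id$ and Taylor coefficients built from iterated applications of $h$ --- is exactly the idea behind the paper's proof, and your guesses $f_2=\pm h$ and $f_3(a,b,c)=\tfrac12\oint h(h(a,b),c)$ match what the paper's construction produces. The problem is that you stop at $n=3$ and assert that the higher coefficients ``can be defined inductively'' and that the equations ``reduce to repeated applications of (2), (3), and the Jacobi identity.'' That assertion is the entire content of the theorem: nothing in hypotheses (1)--(3) obviously controls the $L_{\infty}$-morphism equations at order $n\ge 4$, and an order-by-order verification with the correct combinatorial coefficients and Koszul signs is precisely the hard part you have not done. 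You even flag this yourself as ``the main obstacle,'' so as written the proof is incomplete at its essential step.

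The paper closes this gap with one structural observation that you are missing. Pass to the $L_{\infty}[1]$-picture via d\'ecalage: $V=L[1]$, $q_1=-d$, $q_2(a,b)=(-1)^{\bar a}[a,b]$, and $r(a,b)=(-1)^{\bar a}h(a,b)$. Conditions (2) and (3) then read exactly $[r,q_1]_{NR}=q_2$ and $[r,q_2]_{NR}=0$ in the Nijenhuis--Richardson bracket on $\Hom^*_{\K}(\bar{S^c}V,V)\simeq\Coder^*_{\K}(\bar{S^c}V,\bar{S^c}V)$. Letting $R$ be the coderivation with corestriction $r$, the element $e^R$ is a well-defined coalgebra automorphism of $\bar{S^c}V$ with identity linear part (since $R$ lowers symmetric degree), and
\[
e^{[R,-]}(Q_1)=Q_1+[R,Q_1]+\tfrac12[R,[R,Q_1]]+\cdots=Q_1+Q_2
\]
because the adjoint series terminates after the first-order term, thanks to $[r,q_2]_{NR}=0$. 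This single two-line computation \emph{is} the verification of all the higher $L_{\infty}$-morphism equations at once: $e^R$ intertwines the abelian codifferential $Q_1$ with $Q_1+Q_2$, so its Taylor coefficients are your sought-after $f_n$ with the correct coefficients $\tfrac{1}{(n-1)!}$ built in. Without this (or an equivalent closed-form induction), your argument does not go through. On the second half of the statement: your route for the homotopy fiber (a commuting square of $L_{\infty}$-quasi-isomorphisms of arrows, reducing to the homotopy fiber of an inclusion of abelian DGLAs) is sound in principle, and correctly avoids the trap the paper warns about (the homotopy fiber of a map of homotopy abelian DGLAs need not be homotopy abelian); but it invokes homotopy invariance of homotopy fibers for $L_{\infty}$-morphisms, which would need justification. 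The paper's route is more economical: extend $h$ by scalars to $L[t,dt]$, note it commutes with the evaluation maps $e_0,e_1$, hence restricts to the homotopy fiber of $M\subset L$, and apply the first part of the theorem directly there.
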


\begin{proof} (cf. \cite[Theorem~3.2]{FMpoisson}) According to the definition of $\oint$, for every multilinear map
$f\colon L\times L\times L\to L$  we have
\[ \oint f(a,b,c)=f(a,b,c)+(-1)^{\bar{a}(\bar{b}+\bar{c})}f(b,c,a)
+(-1)^{\bar{c}(\bar{a}+\bar{b})}f(c,a,b).\]
For instance, using this formalism, the graded Jacobi identity in a differential graded Lie algebra becomes
\[ \oint [[a,b],c]=0.\]
It is convenient to consider the $L_{\infty}[1]$-algebra $(V,q_1,q_2,0,\ldots)$ corresponding to $L$ via standard
d\'ecalage isomorphism:
\[V=L[1],\qquad q_1=-d,\qquad q_2(a,b)=(-1)^{\bar{a}}[a,b],\]
where $\bar{a}$ denotes the degree of $a$ in $L$.
The ``homotopy'' $h$ corresponds to a map
\[ r\in \Hom^0_{\K}(V^{\odot 2},V),\qquad r(a,b)=(-1)^{\bar{a}}h(a,b),\]
and it is straightforward to check that the above conditions
\eqref{item.abelianitycriterion2} and \eqref{item.abelianitycriterion3} become
\[ [r,q_1]_{NR}=q_2,\qquad [r,q_2]_{NR}=0,\]
where $[,]_{NR}$ is  the Nijenhuis-Richardson bracket on
$\Hom^*_{\K}(\bar{S^c}{V},V)$, i.e., the trasfer of the graded commutator bracket on the Lie algebra of coderivations of the reduced symmetric coalgebra $\bar{S^c}(V)$ via the corestriction isomorphism
$\Hom^*_{\K}(\bar{S^c}{V},V)\simeq \Coder^*_{\K}(\bar{S^c}{V},\bar{S^c}{V})$.

Let's denote by $R,Q_1,Q_2\in\Coder^*_{\K}(\bar{S^c}(V), \bar{S^c}(V))$ the coderivations with corestriction
$r,q_1,q_2$ respectively, we have that $R(V^{\odot n})\subset V^{\odot n-1}$ and then it is well defined an isomorphism of graded coalgebras $e^R\colon \bar{S^c}(V)\to\bar{S^c}(V)$. To conclude the proof it is sufficient
to prove that
\[ Q_1+Q_2=e^R\circ Q_1\circ e^{-R}=e^{[R,-]}(Q_1).\]
Taking corestrictions the above equality becomes
\[  q_1+q_2=e^{[r,-]_{NR}}(q_1)=q_1+[r,q_1]_{NR}+\frac{1}{2}[r,[r,q_1]]_{NR}+\cdots\]
which is clearly equivalent to $[r,q_1]_{NR}=q_2$ and $[r,q_2]_{NR}=0$.
If $P$ is a differential graded commutative algebra we can extend  the operator $h$ to
$P\otimes L$ in the obvious way:
\[  h(u\otimes a,v\otimes b)=(-1)^{\bar{u}+\bar{v}+\bar{v}\;\bar{a}}\; uv\otimes h(a,b)\;.\]
The validity of the  properties \eqref{item.abelianitycriterion2} and \eqref{item.abelianitycriterion3} for the operator $h$ in $P\otimes L$ is clear.\par

In particular $h$ extends to $L[t,dt]$ and commutes with the evaluation maps
$L[t,dt]\xrightarrow{t\mapsto 0}L$ and  $L[t,dt]\xrightarrow{t\mapsto 1}L$. Therefore $h$ extends to the homotopy fiber of the inclusion.
It is worth to notice here that, in general, the homotopy fiber of a morphism of 
homotopy abelian DGLAs is not homotopy abelian.
\end{proof}

\begin{corollary}\label{cor.criterionTOT}
Let
 \[
L_{\bullet}:\qquad \xymatrix{ {{L}_0}
\ar@<2pt>[r]\ar@<-2pt>[r] & { {L}_1}
      \ar@<4pt>[r] \ar[r] \ar@<-4pt>[r] & { {L}_2}
\ar@<6pt>[r] \ar@<2pt>[r] \ar@<-2pt>[r] \ar@<-6pt>[r]&
\cdots}
\]
be a semicosimplicial differential graded Lie algebra. Assume it is given a sequence of bilinear map
$h_n\colon L_n\times L_n\to L_n$ of degree $-1$ such that:
\begin{enumerate}

\item for every $n$ the map $h_n$ satisfies the conditions of Theorem~\ref{thm.criterion};

\item the sequence $\{h_n\}$ gives a semicosimplicial map, i.e., for every $n$ and every
face operator $\de_k\colon L_n\to L_{n+1}$ we have $h_{n+1}(\de_k a,\de_k b)=\de_k h_n(a,b)$.
\end{enumerate}
Then the totalization $\Tot(L_{\bullet})$ is a homotopy abelian differential graded Lie algebra.
\end{corollary}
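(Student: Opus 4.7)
The plan is to promote the semicosimplicial homotopies $h_n$ to a single homotopy $h_{\Tot}$ on $\Tot(L_\bullet)$, and then apply Theorem~\ref{thm.criterion} to the totalization.

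First, I would extend each $h_n$ to the tensor product $\Omega_n\otimes L_n$ exactly by the recipe used in the second half of the proof of Theorem~\ref{thm.criterion}, namely
\[ h_n^{ext}(u\otimes a,\,v\otimes b)=(-1)^{\bar{u}+\bar{v}+\bar{v}\,\bar{a}}\;uv\otimes h_n(a,b),\qquad u,v\in \Omega_n,\; a,b\in L_n. \]
As already observed there, since $\Omega_n$ is a DG-commutative algebra and the three axioms of Theorem~\ref{thm.criterion} are preserved under such scalar extension, $h_n^{ext}$ satisfies on the DGLA $\Omega_n\otimes L_n$ the very same antisymmetry, homotopy, and cyclic identities verified by $h_n$ on $L_n$.

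Next, I would define componentwise
\[ h_{\Tot}\bigl((x_n),(y_n)\bigr)=\bigl(h_n^{ext}(x_n,y_n)\bigr)_{n\ge 0}\in\prod_{n\ge 0}\Omega_n\otimes L_n. \]
The crucial step, and the only one that uses semicosimpliciality, is to check that $h_{\Tot}$ preserves the descent condition defining $\Tot(L_\bullet)$. For this, note that the formula defining $h_n^{ext}$ is natural in the two factors: for any morphism of DG-commutative algebras $\phi\colon\Omega_n\to\Omega_{n-1}$ one has $(\phi\otimes Id)\circ h_n^{ext}=h_n^{ext}\circ(\phi\otimes Id,\phi\otimes Id)$, and hypothesis (2), namely $\partial_k h_n(a,b)=h_{n+1}(\partial_k a,\partial_k b)$, gives the analogous identity $(Id\otimes\partial_k)\circ h_{n-1}^{ext}=h_n^{ext}\circ(Id\otimes\partial_k,Id\otimes\partial_k)$. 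Combining these with the cosimplicial compatibility $(\partial_k^*\otimes Id)x_n=(Id\otimes\partial_k)x_{n-1}$ satisfied by $(x_n),(y_n)\in\Tot(L_\bullet)$ gives
\[ (\partial_k^*\otimes Id)\,h_n^{ext}(x_n,y_n)=h_n^{ext}\bigl((Id\otimes\partial_k)x_{n-1},(Id\otimes\partial_k)y_{n-1}\bigr)=(Id\otimes\partial_k)\,h_{n-1}^{ext}(x_{n-1},y_{n-1}), \]
so $h_{\Tot}\bigl((x_n),(y_n)\bigr)$ lies in $\Tot(L_\bullet)$, as required.

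Finally, because the differential and the Lie bracket of $\Tot(L_\bullet)$ are inherited componentwise from the DGLAs $\Omega_n\otimes L_n$, the three conditions of Theorem~\ref{thm.criterion} for $h_{\Tot}$ on $\Tot(L_\bullet)$ reduce, component by component, to the corresponding conditions for $h_n^{ext}$ on $\Omega_n\otimes L_n$, which were established in the first step. Applying Theorem~\ref{thm.criterion} to $(\Tot(L_\bullet),h_{\Tot})$ yields the claimed homotopy abelianity. The only slightly delicate point is the verification of the descent compatibility in the middle step; everything else is routine propagation of identities through the componentwise construction.
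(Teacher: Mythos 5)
Your proof is correct and follows essentially the same route as the paper: scalar extension of each $h_n$ to $\Omega_n\otimes L_n$, a componentwise homotopy on the product, and stability of $\Tot(L_\bullet)$ under it via compatibility with the face operators, followed by an appeal to Theorem~\ref{thm.criterion}. The only difference is cosmetic — you verify the three axioms directly on $\Tot(L_\bullet)$, while the paper invokes the sub-DGLA clause of Theorem~\ref{thm.criterion} for $\Tot(L_\bullet)\subset\prod_n\Omega_n\otimes L_n$ — and your explicit check of the descent compatibility is a welcome expansion of what the paper leaves implicit.
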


\begin{proof} By scalar extension, every map $h_n$ extends to a bilinear map on $\Omega_n\otimes L_n$ and then
we have a bilinear map
\[ h\colon (\prod_{n}\Omega_n\otimes L_n)\times (\prod_{n}\Omega_n\otimes L_n)\to \prod_{n}\Omega_n\otimes L_n\;,\]
\[ h((a_1,a_2,\ldots),(b_1,b_2,\ldots))=(h_1(a_1,b_1), h_2(a_2,b_2),\ldots),\]
which satisfies the condition of Theorem~\ref{thm.criterion}. Since the morphisms $h_n$ commute with face operators, the totalization $\Tot(L_{\bullet})\subset \prod_{n}\Omega_n\otimes L_n$ is stable under $h$.
\end{proof}

\begin{lemma}\label{lem.dedfinizionedih} Let $(X,\pi)$ be a holomorphic Poisson manifold. 
Then the bilinear morphism 
\[h:\Omega^{i}_{X}\otimes\Omega^{j}_{X}\rightarrow\Omega^{i+j-2}_{X},\qquad h(\alpha,\beta)=(-1)^{i}(\bi_{\pi}(\alpha\wedge\beta)-\bi_{\pi}(\alpha)\wedge\beta-\alpha\wedge\bi_{\pi}(\beta)),\]
defined on the sheaf of DGLAs $(\Omega^{\ast}_{X}[1],[\cdot,\cdot]_{\pi},\de)$ verify the assumptions of Theorem~\ref{thm.criterion}. Given an open covering $\sU$ of $X$, there is induced a sequence of linear maps on the semicosimplicial DGLA $\Omega^{\ast}_{X}[1](\sU)_{\bullet}$ as in the hypotheses of Corollary~\ref{cor.criterionTOT}.
\end{lemma}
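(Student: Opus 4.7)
My plan is to verify the three algebraic conditions of Theorem~\ref{thm.criterion} for $h$ acting on the sheaf DGLA $(\Omega^*_X[1],[\cdot,\cdot]_\pi,\partial)$, and then to observe that the induced maps on the \v Cech resolution automatically fit into Corollary~\ref{cor.criterionTOT}. Throughout, write $\bar\alpha=i-1$ for the shifted degree of $\alpha\in\Omega^i_X$.

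Conditions (1) and (2) are formal consequences of the definitions and of the identity $\bl_\pi=\bi_\pi\partial-\partial\bi_\pi$. For the graded antisymmetry (condition (1)), the key observation is that $\bi_\pi$ is an even operator of degree $-2$, so in each of the three terms of $h(\alpha,\beta)$ the swap $\alpha\leftrightarrow\beta$ produces the same factor $(-1)^{ij}$ coming from $\wedge$-commutativity; combined with the change of the outer prefactor from $(-1)^i$ to $(-1)^j$ one gets $h(\beta,\alpha)=(-1)^{i+j+ij}h(\alpha,\beta)$, and a short parity check gives $(-1)^{i+j+ij}=-(-1)^{(i-1)(j-1)}$. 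For the homotopy identity (condition (2)), I would expand $\partial h(\alpha,\beta)$ using the Leibniz rule for $\partial$ on $\wedge$ and move $\partial$ across $\bi_\pi$ via $\partial\bi_\pi=\bi_\pi\partial-\bl_\pi$; the terms containing a single $\partial\alpha$ or $\partial\beta$ reassemble into contributions of the form $h(d\alpha,\beta)$ and $(-1)^{\bar\alpha}h(\alpha,d\beta)$, while the remaining terms (those involving $\bl_\pi$) reassemble, by the very definition~\eqref{koszul}, into the Koszul bracket $[\alpha,\beta]_\pi$.

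The cyclic identity (condition (3))
\[
\oint[h(\alpha,\beta),\gamma]_\pi\;+\;\oint h([\alpha,\beta]_\pi,\gamma)=0
\]
is the main obstacle, since this is the place where the integrability $[\pi,\pi]_{SN}=0$ of the Poisson bivector is used (without it $[\cdot,\cdot]_\pi$ would fail Jacobi, and the cyclic identity would be spoiled). My strategy is to check that both cyclic sums are $\Oh_X$-multilinear and act as graded derivations of $\wedge$ in each of the three entries; this reduction is possible because the multiplicative defects of $\bi_\pi$ and $\bl_\pi$ are themselves expressible in terms of $h$ and $[\cdot,\cdot]_\pi$. By the Leibniz rule the identity then reduces to the generating case $\alpha,\beta,\gamma\in\Omega^{\leq 1}_X$, where it unwinds into a combination of the Jacobi identity for the Poisson bracket $\{\cdot,\cdot\}_\pi$ on $\Oh_X$ and the intertwining $\pi^{\#}\!\circ d=d_\pi\!\circ\pi^{\#}$ recorded in Remark~\ref{rem.ancoramorfismoDGLA}; both of these are equivalent to $[\pi,\pi]_{SN}=0$. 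The bookkeeping is lengthy but mechanical, and follows the pattern of the analogous calculation in \cite{FMpoisson}.

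For the last assertion, since $h$ is defined by a morphism of sheaves on $X$ (built from $\bi_\pi$ and $\wedge$), the induced maps $h_n$ on $\prod_{i_0,\ldots,i_n}\Omega^{\ast}_X[1](U_{i_0\cdots i_n})$ commute with the \v Cech face operators, which are just restrictions to intersections; hence the hypotheses of Corollary~\ref{cor.criterionTOT} are satisfied and the proof is complete.
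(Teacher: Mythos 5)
Your proposal is correct and follows the same route as the paper, whose proof simply declares the first part ``a direct straightforward verification, substantially made in \cite{FMpoisson}'' and the second part clear; your sign check for the graded antisymmetry, the Leibniz/$\bl_\pi=[\bi_\pi,\de]$ computation for condition (2), and the reduction of the cyclic identity to generators in $\Omega^{\le 1}_X$ (where it becomes the Jacobi identity for $\{\cdot,\cdot\}_\pi$) are exactly the verification carried out in \cite{FMpoisson}. One small imprecision: the individual cyclic sums are not $\Oh_X$-multilinear (the Koszul bracket is only a first-order bidifferential operator), but the property you actually use --- that each term obeys a Leibniz-type rule in every slot, since $h$ is the second-order defect of the order-$\le 2$ operator $\bi_\pi$ and $[\cdot,\cdot]_\pi$ satisfies the odd Poisson identity --- is the correct one and suffices for the reduction to generators.
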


\begin{proof} The first part is a direct straightforward 
verification, substantially made in \cite{FMpoisson}; the last part is clear.
\end{proof}

For any integer $k\ge 0$, let $\Omega^{\geq k}_{X}$ denote the sheaf of sub DGeAs of $\Omega^{\ast}_{X}$ concentrated in degrees $\geq k$, in particular $\Omega^{\geq k}_{X}=\Omega^{\ast}_{X}$ for $k=0$. 
Notice that for $k\neq1$ the sheaf of DGLAs $\Omega^{\geq k}_{X}[1]$ is stable with respect to the operator  $h$ defined  in the previous Lemma~\ref{lem.dedfinizionedih}; this and Theorem~\ref{thm.criterion} immediately imply the following proposition.

\begin{proposition} 
Let $(X,\pi)$ be a holomorphic Poisson manifold. Then, for every nonnegative integer $k\neq1$,
$(\Omega^{\geq k}_{X}[1],[\cdot,\cdot]_{\pi},\de)$ is a sheaf of homotopy abelian DGLAs on $X$.
Moreover, for every open covering $\sU$ of $X$ the totalization 
$\Tot(\Omega^{\geq k}_{X}[1](\sU)_{\bullet})$ is homotopy abelian.
\end{proposition}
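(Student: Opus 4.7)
The plan is to apply Theorem~\ref{thm.criterion} and Corollary~\ref{cor.criterionTOT} directly, using the bilinear operator $h$ furnished by Lemma~\ref{lem.dedfinizionedih}. The first step is to record that Lemma~\ref{lem.dedfinizionedih} establishes, on every open $U\subset X$, that
\[ h(\alpha,\beta)=(-1)^i\bigl(\bi_\pi(\alpha\wedge\beta)-\bi_\pi(\alpha)\wedge\beta-\alpha\wedge\bi_\pi(\beta)\bigr), \qquad \alpha\in\Omega^i_X(U),\]
satisfies on $\Omega^*_X[1](U)$ all three hypotheses of Theorem~\ref{thm.criterion}, and moreover is compatible with \v{C}ech restrictions in the sense required by Corollary~\ref{cor.criterionTOT}.

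The central observation is then a degree count: $h$ sends $\Omega^i_X\otimes\Omega^j_X$ into $\Omega^{i+j-2}_X$, so preservation of $\Omega^{\geq k}_X$ under $h$ amounts to the inequality $i+j-2\geq k$ whenever $i,j\geq k$, i.e., $2k-2\geq k$. For $k=0$ nothing has to be checked, since $\Omega^{\geq 0}_X=\Omega^*_X$ is trivially stable; for $k\geq 2$ the inequality holds. The case $k=1$ must be excluded, as two one-forms pair under $h$ into $\Omega^0_X\not\subset\Omega^{\geq 1}_X$; this pinpoints the exceptional value singled out in the statement.

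With stability in hand, Theorem~\ref{thm.criterion} applied sectionwise to $\Omega^{\geq k}_X[1](U)$ immediately gives the first assertion that $\Omega^{\geq k}_X[1]$ is a sheaf of homotopy abelian DGLAs. For the totalization, the semicosimplicial DGLA $\Omega^{\geq k}_X[1](\sU)_\bullet$ inherits the componentwise operator $h$, and its compatibility with the \v{C}ech face operators is evident since these are just restriction maps and $h$ is defined by local formulas involving $\wedge$ and $\bi_\pi$. Corollary~\ref{cor.criterionTOT} then applies and yields the homotopy abelianity of $\Tot(\Omega^{\geq k}_X[1](\sU)_\bullet)$. There is no technical obstacle to overcome: all the substantive work has been packaged in the previous lemma and in the two criteria, and the proposition reduces to the elementary degree check that isolates $k=1$.
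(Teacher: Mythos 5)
Your proposal is correct and follows exactly the paper's own argument: the paper likewise observes that the operator $h$ of Lemma~\ref{lem.dedfinizionedih} preserves $\Omega^{\geq k}_X$ precisely when $k\neq 1$ (the same degree count $i+j-2\geq k$), and then invokes Theorem~\ref{thm.criterion} and Corollary~\ref{cor.criterionTOT}. Nothing further is needed.
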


Next for a closed submanifold $Z\subset X$, let $\sI_{Z}^{\ast}\subset\Omega_{X}^{\ast}$ denote  
the ideal of forms vanishing along $Z$, i.e., the kernel of the natural restriction morphism 
$\Omega_{X}^{\ast}\rightarrow\Omega_{Z}^{\ast}$, also recall the sheaf 
$\sL^{\ast}_{Z}\subset\bigwedge^{\ast}\Theta_{X}$ defined in the previous section.

\begin{proposition}\label{prop.coisotripichstability}
Let $(X,\pi)$ be a holomorphic Poisson manifold and $Z\subset X$ a coisotropic submanifold.
Then $\sI_{Z}^{\ast}\subset\Omega_{X}^{\ast}$ is a sheaf of sub DGeAs; it
is closed with respect to the operator $h$ introduced in Lemma~\ref{lem.dedfinizionedih}
and $\pi^{\#}(\sI_{Z}^{\ast})\subset\sL^{\ast}_{Z}$. Conversely each one of these conditions is equivalent to $Z$ being coisotropic.
\end{proposition}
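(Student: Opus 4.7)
The plan starts with the observation that $\sI_{Z}^{\ast}$ is automatically a $d$-stable multiplicative ideal of $\Omega_{X}^{\ast}$, being the kernel of the morphism of sheaves of DGAs $\Omega_{X}^{\ast}\to\Omega_{Z}^{\ast}$. Consequently, condition (1) reduces to closure under the Koszul bracket $[\cdot,\cdot]_{\pi}$, condition (2) to closure under $h$, and condition (3) is the stated image condition for $\pi^{\#}$. I will then establish the equivalence of each of these three properties with the coisotropy of $Z$.

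Conditions (1) and (3) I would handle together by exploiting the fact that $\sI_{Z}^{\ast}$ is generated as a multiplicative ideal of $\Omega_{X}^{\ast}$ by $\sI_{Z}\cup d\sI_{Z}$: locally with $Z=\{z_{1}=\cdots=z_{p}=0\}$, a $k$-form vanishing along $Z$ splits into a piece divisible by some $dz_{i}=d(z_{i})$ with $i\le p$ and a piece with coefficient in $\sI_{Z}$. For (1), Remark~\ref{remarkideali} reduces closure under $[\cdot,\cdot]_{\pi}$ to checking it on this generating set, and since $\bi_{\pi}$ annihilates forms of degree $\le 1$ a direct computation yields $[f,g]_{\pi}=0$, $[f,dg]_{\pi}=\{f,g\}_{\pi}$ and $[df,dg]_{\pi}=d\{f,g\}_{\pi}$ for $f,g\in\Oh_{X}$; thus (1) is equivalent to $\{\sI_{Z},\sI_{Z}\}_{\pi}\subset \sI_{Z}$. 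For (3), $\pi^{\#}$ is a graded $\Oh_{X}$-algebra morphism fixing $\Oh_{X}$ with $\pi^{\#}(df)=d_{\pi}f$, so $\pi^{\#}(\sI_{Z}^{\ast})$ is the ideal of $\bigwedge^{\ast}\Theta_{X}$ generated by $\sI_{Z}\cup d_{\pi}(\sI_{Z})$; this is contained in $\sL^{\ast}_{Z}$ iff $d_{\pi}(\sI_{Z})\subset \sL^{1}_{Z}$, which by Proposition~\ref{prop.cois} is again coisotropy. The converse of each is immediate: for $f,g\in\sI_{Z}$ the identities $[f,dg]_{\pi}=\{f,g\}_{\pi}$ and $\pi^{\#}(df)(g)=\{f,g\}_{\pi}$ force $\{f,g\}_{\pi}\in \sI_{Z}^{\ast}\cap \Omega_{X}^{0}=\sI_{Z}$.

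The main obstacle will be the forward direction of (2), since $h$ is not a derivation and there is no analogue of Remark~\ref{remarkideali} reducing the check to a generating set. My plan is to resort to a direct local computation: in coordinates as above, writing $\partial_{k}=\frac{\de}{\de z_{k}}$ and $\pi=\sum_{i<j}\pi_{ij}\,\partial_{i}\wedge\partial_{j}$, a repeated application of the graded Leibniz rule for each $\bi_{\partial_{k}}$ inside $\bi_{\pi}(\alpha\wedge\beta)$ will give the identity
\[
h(\alpha,\beta)=\sum_{i<j}\pi_{ij}\Bigl((\bi_{\partial_{i}}\alpha)\wedge(\bi_{\partial_{j}}\beta)-(\bi_{\partial_{j}}\alpha)\wedge(\bi_{\partial_{i}}\beta)\Bigr).
\]
For $\alpha,\beta\in\sI_{Z}^{\ast}$ I would then split the sum by cases on $(i,j)$: if both $i,j>p$ then $\partial_{i},\partial_{j}\in\Theta_{X}(-\log Z)$, and for any $\xi\in\Theta_{X}(-\log Z)$ the contraction $\bi_{\xi}$ preserves $\sI_{Z}^{\ast}$ (since pulling back to $Z$ gives $(\bi_{\xi}\alpha)|_{Z}=\bi_{\xi|_{Z}}(\alpha|_{Z})=0$), so the summand lies in $\sI_{Z}^{\ast}\wedge\sI_{Z}^{\ast}\subset\sI_{Z}^{\ast}$; if exactly one of $i,j$ exceeds $p$ the same observation puts one factor of each wedge in the multiplicative ideal $\sI_{Z}^{\ast}$; and if both $i,j\le p$ then $\pi_{ij}=-\{z_{i},z_{j}\}_{\pi}\in\sI_{Z}$ by coisotropy, so the summand sits in $\sI_{Z}\cdot\Omega_{X}^{\ast}\subset\sI_{Z}^{\ast}$. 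Specialising the same local formula to $\alpha=df,\beta=dg$ yields $h(df,dg)=-\{f,g\}_{\pi}$, which combined with $\sI_{Z}^{\ast}\cap\Omega_{X}^{0}=\sI_{Z}$ delivers the converse of (2).
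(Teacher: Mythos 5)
Your proposal is correct, and for conditions (1) and (3) it follows essentially the paper's own route: local coordinates adapted to $Z$, the observation that $\sI_Z^{\ast}$ is the multiplicative ideal generated by $\sI_Z$ and $d\sI_Z$ (the paper uses the generators $z_j,\de z_i$, $1\le i,j\le p$), and the identities reducing everything to $\{f,g\}_{\pi}\in\sI_Z$. The genuine difference is in the forward direction of $h$-closedness, which is indeed the only step where the absence of a Leibniz rule bites. You attack it head-on with the coordinate identity
\[
h(\alpha,\beta)=\sum_{i<j}\pi_{ij}\bigl((\bi_{\de/\de z_i}\alpha)\wedge(\bi_{\de/\de z_j}\beta)-(\bi_{\de/\de z_j}\alpha)\wedge(\bi_{\de/\de z_i}\beta)\bigr),
\]
which I checked is correct (the cross terms of the Leibniz expansion of $\bi_{\de/\de z_i}\bi_{\de/\de z_j}(\alpha\wedge\beta)$ are exactly what survives after subtracting $\bi_\pi(\alpha)\wedge\beta$ and $\alpha\wedge\bi_\pi(\beta)$), followed by a three-way case analysis on $(i,j)$. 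The paper instead first notes that $h$-closedness amounts to $\bi_\pi(\alpha\wedge\beta)\in\sI_Z^{\ast}$, reduces to $\alpha$ a generator by multiplicativity, and for $\alpha=\de z_j$ uses the operator identity $[\bi_{\pi},\de z_{j}\wedge]=[\bi_{\pi},[\de, z_{j}\wedge]]=[\bl_{\pi},z_{j}\wedge]=[z_{j},\cdot]_{\pi}$ to fall back on the already-established Koszul-bracket closedness. Your computation is more explicit and self-contained (it does not reuse condition (1)); the paper's is shorter and coordinate-light once the Koszul case is in hand. Two cosmetic points: you use $i$ both as the degree of $\alpha$ and as a summation index, and $\pi^{\#}(\sI_Z^{\ast})$ is only \emph{contained in} (not equal to) the ideal generated by $\sI_Z\cup d_{\pi}(\sI_Z)$, though this containment is all your argument actually needs.
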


\begin{proof} Choose  local holomorphic  coordinates $z_{1},\ldots,z_{n}$ such that $Z=\{z_1=\cdots=z_p=0\}$.
If we assume that either $\sI_{Z}^{\ast}$ is $[\cdot,\cdot]_{\pi}$-closed, or that it is $h$-closed, or $\pi^{\#}(\sI_{Z}^{\ast})\subset\sL^{\ast}_{Z}$, for $1\leq i<j\leq p$ then
\[-\pi_{ij}=\bi_{\pi}(\de z_{i}\wedge \de z_{j})=-h(\de z_{i},\de z_{j})=\pi^{\#}(\de z_{i})(z_{j})=[\de z_{i},z_{j}]_{\pi}\in\sI_{Z}\]
and then  $Z$ is coisotropic (cf. the proof of \ref{prop.cois}). 

Since $\sI_{Z}^{\ast}\subset\Omega_{X}^{\ast}$ is the multiplicative ideal generated by $S=\{z_{j}$, $\de z_{i}\}_{1\leq i,j\leq p}$ the above computation also shows  the converse  
except $h$-closeness; in order to prove that 
$\bi_{\pi}(\alpha\wedge\beta)\in\sI_{Z}^{\ast}$ for $\alpha,\beta\in\sI_{Z}^{\ast}$ 
 and it is not restrictive to take as $\alpha$ an element of $S$. 
If $\alpha=z_{i}$ with $1\leq j\leq p$ the claim follows by $\Oh_{X}$-linearity of $\bi_{\pi}$, while for $\alpha=\de z_i$ the statement is equivalent to the fact that $\sI_{Z}^{\ast}$ is $(\bi_{\pi}\circ\de z_{j}\wedge)$-closed for $1\leq j\leq p$: this is true if and only if it is closed with respect to $[\bi_{\pi},\de z_{j}\wedge]=[\bi_{\pi},[\de, z_{j}\wedge]]=[\bl_{\pi},z_{j}\wedge]=[z_{j},\cdot]_{\pi}$, using formula \eqref{koszul}, which we already know.

Notice that for a coisotropic submanifold $Z$ the subspace  
$\sI_{Z}^{\ast}$ is  not $\bi_{\pi}$-closed in general.

\end{proof}

Denote with $\sJ^{\geq k}_{Z}$ the homotopy fiber of the inclusion of sheaves of DGLAs $\sI_{Z}^{\geq k}[1]\hookrightarrow\Omega_{X}^{\geq k}[1]$ (and with $\sJ^{\ast}_{Z}=\sJ^{\geq 0}_{Z}$).

\begin{proposition}\label{prop.abelianityoftots}
Let $(X,\pi)$ be holomorphic Poisson manifold, $Z\subset X$ a coisotropic submanifold and $k$ a nonnegative integer. If  $k\neq1$, then $\sI_{Z}^{\geq k}[1]$, $\sJ^{\geq k}_{Z}$ are sheaves of homotopy abelian DGLAs on $X$. Moreover, for an open covering $\sU$ of $X$ the totalizations $\Tot(\sI_{Z}^{\geq k}[1](\sU)_{\bullet})$, $\Tot(\sJ^{\geq k}_{Z}(\sU)_{\bullet})$ are homotopy abelian.
\end{proposition}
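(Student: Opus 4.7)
The approach is to invoke Theorem~\ref{thm.criterion} sheafwise and Corollary~\ref{cor.criterionTOT} on the Čech semicosimplicial DGLA, using the operator $h$ from Lemma~\ref{lem.dedfinizionedih} restricted to the subsheaf $\sI_{Z}^{\geq k}[1]$. The essential point to verify is that $\sI_{Z}^{\geq k}[1]$ is closed under $h$ when $k\neq 1$; the value $k=1$ is excluded precisely by a degree count, since $h\colon \Omega^{i}_{X}\otimes\Omega^{j}_{X}\to \Omega^{i+j-2}_{X}$ fails to preserve $\Omega^{\geq 1}_{X}$ already (e.g.\ $h(\Omega^{1},\Omega^{1})\subset\Omega^{0}$).

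First I would verify that $\sI_{Z}^{\geq k}[1]$ is a sheaf of sub DGLAs of $\Omega_{X}^{\geq k}[1]$ closed with respect to $h$. For $k=0$ this is exactly Proposition~\ref{prop.coisotripichstability}. For $k\geq 2$, I use again Proposition~\ref{prop.coisotripichstability} to get $[\sI_{Z}^{*},\sI_{Z}^{*}]_{\pi}\subset\sI_{Z}^{*}$, $d(\sI_{Z}^{*})\subset\sI_{Z}^{*}$ and $h(\sI_{Z}^{*},\sI_{Z}^{*})\subset\sI_{Z}^{*}$; combined with the degree shifts of $[\cdot,\cdot]_{\pi}$ and $h$ (of degrees $-1$ and $-2$ on $\Omega^{\ast}_{X}$ respectively) one immediately checks that $[\sI_{Z}^{\geq k},\sI_{Z}^{\geq k}]_{\pi}\subset \sI_{Z}^{\geq 2k-1}\subset\sI_{Z}^{\geq k}$ and $h(\sI_{Z}^{\geq k},\sI_{Z}^{\geq k})\subset \sI_{Z}^{\geq 2k-2}\subset\sI_{Z}^{\geq k}$, while $d$ obviously preserves the filtration.

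Second, I apply the ``subalgebra'' version of Theorem~\ref{thm.criterion} sheafwise, taking $M=\sI_{Z}^{\geq k}[1]$ inside $L=\Omega_{X}^{\geq k}[1]$: this simultaneously gives the homotopy abelianity of $\sI_{Z}^{\geq k}[1]$ and of the homotopy fiber $\sJ_{Z}^{\geq k}$ of the inclusion. For the totalization statement, naturality of $h$ with respect to restriction maps yields a semicosimplicial family $\{h_n\}$ satisfying the hypotheses of Corollary~\ref{cor.criterionTOT} on both $\Omega_{X}^{\geq k}[1](\sU)_{\bullet}$ and its subcosimplicial $\sI_{Z}^{\geq k}[1](\sU)_{\bullet}$, whence $\Tot(\sI_{Z}^{\geq k}[1](\sU)_{\bullet})$ is homotopy abelian. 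For $\Tot(\sJ_{Z}^{\geq k}(\sU)_{\bullet})$ I would combine the Remark that $\Tot$ commutes with taking homotopy fibers with the final observation in the proof of Theorem~\ref{thm.criterion}: the operator $h$ extends by $\K[t,dt]$-linearity to the homotopy fiber construction, still satisfying the three conditions of that theorem.

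The main obstacle, beyond the degree bookkeeping around the shift $[1]$ (which is why the case $k=1$ must be excluded), is ensuring that the two compatibilities---with the Čech semicosimplicial structure and with the homotopy fiber construction---combine coherently. Both are handled essentially by naturality: the operator $h$ is defined via $\bi_{\pi}$ and the wedge product, hence commutes with restriction to open subsets and with scalar extension by the polynomial de Rham algebra $\K[t,dt]$, so it passes to every construction involved without obstruction.
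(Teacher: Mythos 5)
Your proposal is correct and follows exactly the route the paper intends: the paper's own proof is the single line ``Immediate from Proposition~\ref{prop.coisotripichstability} and Theorem~\ref{thm.criterion}'', and your write-up simply fills in the details (the $h$-stability of $\sI_Z^{\geq k}$ for $k\neq 1$ via the degree count, the subalgebra/homotopy-fiber clause of Theorem~\ref{thm.criterion}, and Corollary~\ref{cor.criterionTOT} for the totalizations).
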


\begin{proof} Immediate from
Proposition~\ref{prop.coisotripichstability} and Theorem~\ref{thm.criterion}.
\end{proof}

In the same assumption of  Proposition~\ref{prop.abelianityoftots} we have a commutative diagram of DGLAs:

\begin{equation}
\xymatrix{\Tot(\sJ^{\ast}_{Z}(\sU)_{\bullet})\ar[r] & \Tot(\sI_{Z}^{\ast}[1](\sU)_{\bullet})\ar[r] & \Tot(\Omega^{\ast}_{X}[1](\sU)_{\bullet})\\
             \Tot(\sJ^{\geq 1}_{Z}(\sU)_{\bullet})\ar[r]\ar[u]& \Tot(\sI_{Z}^{\geq1}[1](\sU)_{\bullet})\ar[r]\ar[u]& \Tot(\Omega^{\geq 1}_{X}[1](\sU)_{\bullet})\ar[u]}
\end{equation}
with the DGLAs in the top row homotopy abelian and the inclusions as vertical arrows.

\begin{lemma} Let $\sU$ be an open Stein covering of $X$.
\begin{enumerate}

\item
if the Hodge to de Rham spectral sequence of $X$ degenerate at $E_1$, then the differential graded Lie algebra
$\Tot(\Omega^{\geq 1}_{X}[1](\sU)_{\bullet})$ is homotopy abelian.

\item if the Hodge to de Rham spectral sequence of $Z$ degenerate at $E_1$, then the differential graded Lie algebra
$\Tot(\sJ^{\geq 1}_{Z}(\sU)_{\bullet})$ is homotopy abelian.

\end{enumerate}
\end{lemma}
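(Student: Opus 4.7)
The plan is to apply Proposition~\ref{prop.injective in cohomology}(1) in both cases. By Proposition~\ref{prop.abelianityoftots} applied with $k=0$ (which satisfies the hypothesis $k\ne 1$), both $\Tot(\Omega^{\ast}_{X}[1](\sU)_{\bullet})$ and $\Tot(\sJ^{\ast}_{Z}(\sU)_{\bullet})$ are already homotopy abelian, and there are tautological injective morphisms of differential graded Lie algebras
\[
\Tot(\Omega^{\geq 1}_{X}[1](\sU)_{\bullet})\hookrightarrow \Tot(\Omega^{\ast}_{X}[1](\sU)_{\bullet}),
\qquad
\Tot(\sJ^{\geq 1}_{Z}(\sU)_{\bullet})\hookrightarrow \Tot(\sJ^{\ast}_{Z}(\sU)_{\bullet}).
\]
It will therefore suffice to prove that each of these morphisms is injective on cohomology.

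For item (1), Example~\ref{ex.coomologiadelTotversusHypercohomology} identifies the map on $H^p$ with the natural map $\H^{p+1}(X, \Omega^{\geq 1}_X)\to \H^{p+1}(X, \Omega^{\ast}_X)=H^{p+1}(X,\C)$ coming from the short exact sequence of complexes of sheaves
\[
0\to \Omega^{\geq 1}_X\to \Omega^{\ast}_X\to \Oh_X\to 0,
\]
with $\Oh_X$ concentrated in degree zero. In the associated long exact sequence the subsequent map $\H^n(X, \Omega^{\ast}_X)\to H^n(X, \Oh_X)$ is the projection onto the Hodge graded piece $F^0/F^1$, which is surjective precisely when the Hodge-to-de Rham spectral sequence of $X$ degenerates at $E_1$; hence under our hypothesis the preceding arrow is injective, as required.

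For item (2), the first task is to identify the cohomology of $\Tot(\sJ^{\geq k}_Z(\sU)_\bullet)$ with $\H^{\ast}(Z, \Omega^{\geq k}_Z)$ in a natural way. Since $\sJ^{\geq k}_Z$ is by definition the homotopy fiber of the injective morphism of complexes of sheaves $\sI^{\geq k}_Z[1]\hookrightarrow \Omega^{\geq k}_X[1]$, the lemma on $\int_0^1$ in Section~\ref{sec.Deligne and totalization} provides a natural quasi-isomorphism $\sJ^{\geq k}_Z\to \Omega^{\geq k}_Z[1][-1]=\Omega^{\geq k}_Z$. By functoriality of this construction with respect to the obvious commutative square of inclusions relating the cases $k=1$ and $k=0$, together with Example~\ref{ex.coomologiadelTotversusHypercohomology}, the inclusion on cohomology is identified with the natural map $\H^p(Z, \Omega^{\geq 1}_Z)\to \H^p(Z, \Omega^{\ast}_Z)=H^p(Z,\C)$. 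The same short-exact-sequence argument as in (1), applied now on $Z$ and invoking its Hodge-to-de Rham degeneration, concludes.

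The step I expect to require the most care is the functoriality of $\int_0^1$ with respect to variation of $k$, ensuring that the homotopy-fiber inclusion $\sJ^{\geq 1}_Z\hookrightarrow \sJ^{\ast}_Z$ does indeed induce on cohomology the expected map coming from $\Omega^{\geq 1}_Z\hookrightarrow \Omega^{\ast}_Z$; however, this is immediate from the explicit description of $\int_0^1$ as integration of the polynomial-form component modulo the image of the injective morphism, which is manifestly compatible with morphisms of injective morphisms.
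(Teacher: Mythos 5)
Your proof is correct and follows essentially the same route as the paper: both reduce to Proposition~\ref{prop.injective in cohomology}(1) using the homotopy abelianity of the $k=0$ totalizations from Proposition~\ref{prop.abelianityoftots}, the identification of the relevant cohomologies with $\H^*(X,\Omega^{\ge 1}_X)$ resp.\ $\H^*(Z,\Omega^{\ge 1}_Z)$ (via the Whitney map and, for (2), the quasi-isomorphism $\sJ^{\ge k}_Z(U)\to\Omega^{\ge k}_Z(U)$ on Stein opens), and the injectivity supplied by $E_1$-degeneration. The only cosmetic difference is that you derive that injectivity from the long exact sequence of $0\to\Omega^{\ge 1}\to\Omega^{*}\to\Oh\to 0$, while the paper invokes the filtration definition of the Hodge to de Rham spectral sequence directly; these are the same standard fact.
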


\begin{proof}
Recall that the Hodge to de Rham spectral sequence of a smooth complex manifold $X$ may be defined as the spectral sequence associated to the  filtration of \v{C}ech (double) complexes
$F^k=C(\sU,\Omega^{\ge k}_X)$ (see e.g. \cite{DI}).
Since the Whitney maps
\[ I\colon \Tot(\Omega^{\geq k}_{X}(\sU)_{\bullet})\to
C(\sU,\Omega^{\ge k}_X)\]
are quasi-isomorphisms of complexes, the first item is an immediate consequence of Proposition~\ref{prop.injective in cohomology}.

The second item  proved in the same way,  pointing out that for every Stein open subset $U\subset X$ and every $k\ge 0$ the complexes $\sJ^{\ge k}_Z(U)$ and $\Omega^{\ge k}_Z(U)$ are quasi-isomorphic.
\end{proof}

Next, fix an open Stein covering $\sU$ and consider the following commutative diagram of anchor maps: according to Remark~\ref{rem.ancoramorfismoDGLA} these are morphisms of differential graded Lie algebras.
\begin{equation}
\xymatrix{
\Tot(\sJ^{\geq 1}_{Z}(\sU)_{\bullet})\ar[r]\ar[d]^-{\pi^{\#}} & \Tot(\sI_{Z}^{\geq1}[1](\sU)_{\bullet})\ar[r]\ar[d]^-{\pi^{\#}} & \Tot(\Omega^{\geq 1}_{X}[1](\sU)_{\bullet})\ar[d]^-{\pi^{\#}}\\
             \Tot(\sK^{\geq1}_{Z}(\sU)_{\bullet})\ar[r] & \Tot(\sL_{Z}^{\geq1}[1](\sU)_{\bullet})\ar[r] & \Tot(\bigwedge^{\geq1}\Theta_{X}[1](\sU)_{\bullet})
}
\end{equation}

In particular the anchor map induces morphisms of deformation functors
\[ \pi^{\#}\colon \Def_{\Tot(\sJ^{\geq 1}_{Z}(\sU)_{\bullet})}\to \Hilb^{co}_{Z|X}\,,\qquad
\pi^{\#}\colon \Def_{\Tot(\Omega^{\geq 1}_{X}[1](\sU)_{\bullet})}\to \Def_{(X,\pi)}\,,\]
which at first order reduce to the anchor map in cohomology:
\[  \pi^{\#}\colon H^1(\Tot(\sJ^{\geq 1}_{Z}(\sU)_{\bullet}))=
H^1(Z,\Omega^{\ge 1}_Z)\to T^1\Hilb^{co}_{Z|X},\]
\[ \pi^{\#}\colon H^1(\Tot(\Omega^{\geq 1}_{X}[1](\sU)_{\bullet}))=
H^2(X,\Omega^{\ge 1}_X)
\to T^1\Def_{(X,\pi)}.\]

Whenever the Hodge to de Rham spectral sequence of $Z$ (resp.: $X$) degenerates at $E_1$
we have an isomorphism $H^1(Z,\Omega^{\ge 1}_Z)\simeq H^0(Z,\Omega^{1}_Z)$ (resp.:
$H^2(X,\Omega^{\ge 1}_X)\simeq H^0(X,\Omega^2_X)\oplus H^1(X,\Omega^1_X)$).

\begin{theorem}\label{thm.defcoisotropic}
In the notation above, if  the Hodge to de Rham spectral sequence of $Z$  degenerates at $E_1$,  
then  for every $\omega\in H^0(Z,\Omega^1_Z)$ the first order embedded coisotropic deformation 
$\pi^{\#}(\omega)$ extends to an embedded coisotropic deformation of $Z$
over $\Spec(\mathbb{C}[[t]])$.
\end{theorem}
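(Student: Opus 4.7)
My plan is to apply Theorem~\ref{thm.deformazionicoisotrope} after producing a formal Maurer-Cartan solution on the ``forms'' side and pushing it forward through the anchor map. First I fix an open Stein cover $\sU$ of $X$ and set $L:=\Tot(\sJ^{\geq 1}_{Z}(\sU)_{\bullet})$, $K:=\Tot(\sK^{\geq1}_{Z}(\sU)_{\bullet})$, so that the anchor map induces a morphism of DGLAs $\pi^{\#}\colon L\to K$. Under the $E_{1}$-degeneration assumption the preceding lemma asserts that $L$ is homotopy abelian, while Example~\ref{ex.coomologiadelTotversusHypercohomology} together with the definition of $\sJ^{\geq 1}_{Z}$ identifies $H^{1}(L)\simeq H^{1}(Z,\Omega^{\geq 1}_{Z})\simeq H^{0}(Z,\Omega^{1}_{Z})$ in such a way that the morphism induced by $\pi^{\#}$ on $H^{1}$ coincides with the anchor map on $1$-forms. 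Theorem~\ref{thm.homotopyinvariance} then guarantees that $\Def_{L}$ is smooth.

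Given $\omega\in H^{0}(Z,\Omega^{1}_{Z})$, I will pick a cocycle $\tilde\omega\in L^{1}$ representing the corresponding class in $H^{1}(L)$. Smoothness of $\Def_{L}$ allows me to construct inductively, for every $n\ge 1$, a compatible family of Maurer-Cartan solutions $\xi_{n}\in L^{1}\otimes (t)/(t^{n+1})$ with $\xi_{1}=t\tilde\omega$, and the inverse limit provides a formal Maurer-Cartan element $\xi(t)=t\tilde\omega+t^{2}\xi_{2}+\cdots\in L^{1}[[t]]$. Applying the DGLA morphism $\pi^{\#}$ termwise produces a Maurer-Cartan element $\pi^{\#}(\xi(t))\in K^{1}[[t]]$ whose first-order term is $t\,\pi^{\#}(\tilde\omega)$, and by Theorem~\ref{thm.deformazionicoisotrope} this corresponds to a formal embedded coisotropic deformation of $Z$ in $(X,\pi)$ over $\Spec(\C[[t]])$ with Kodaira-Spencer class $\pi^{\#}(\omega)$.

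The essential homotopy-algebraic content has already been packaged in Proposition~\ref{prop.abelianityoftots} and the $E_{1}$-degeneration lemma, so the remaining difficulties are largely bookkeeping. The step that I expect to require the most care is extending Theorem~\ref{thm.deformazionicoisotrope} from Artin rings to $\C[[t]]$: a formal deformation over $\Spec\C[[t]]$ is a compatible family of deformations over $\Spec(\C[t]/(t^{n+1}))$ for $n\ge 1$, and on both sides one must pass to the inverse limit of the Artinian theories. This is unproblematic once smoothness at each order is secured, which in turn follows from homotopy abelianity of $L$; at stage $n$ the obstruction to extending $\xi_{n}$ lives in $H^{2}(L)$ and its vanishing must be argued cleanly through the inductive step. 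A second, smaller point is verifying that the isomorphism $H^{1}(L)\simeq H^{0}(Z,\Omega^{1}_{Z})$ intertwines the morphism on $H^{1}$ induced by $\pi^{\#}$ with the sheaf-level anchor map on $1$-forms, which ultimately reduces to the compatibilities already recorded in Remark~\ref{rem.ancoramorfismoDGLA} and Proposition~\ref{prop.coisotripichstability}.
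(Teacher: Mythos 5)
Your proposal is correct and follows exactly the paper's route: the key point in both is that the $E_1$-degeneration lemma makes $\Tot(\sJ^{\geq 1}_{Z}(\sU)_{\bullet})$ homotopy abelian, hence $\Def$ of it is unobstructed, and one then pushes a formal Maurer--Cartan lift of $\omega$ forward through the anchor morphism into $\Tot(\sK^{\geq1}_{Z}(\sU)_{\bullet})$ and invokes Theorem~\ref{thm.deformazionicoisotrope}. The paper compresses all of this into one sentence; your version merely spells out the order-by-order lifting and the identification $H^1(\Tot(\sJ^{\geq 1}_{Z}(\sU)_{\bullet}))\simeq H^0(Z,\Omega^1_Z)$, both of which are consistent with what the paper records just before the theorem.
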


\begin{proof} Clear, since for every open Stein covering $\sU$ the DGLA
$\Tot(\sJ^{\geq 1}_{Z}(\sU)_{\bullet})$ is homotopy abelian and
then the functor $\Def_{\Tot(\sJ^{\geq 1}_{Z}(\sU)_{\bullet})}$ is unobstructed.
\end{proof}

\begin{theorem}\label{thm.defpoisson}
In the notation above, if  the Hodge to de Rham spectral sequence of $X$  degenerates at $E_1$  then  for every
$\omega\in H^0(X,\Omega^2_X)\oplus H^1(X,\Omega^1_X)$ the first order deformation $\pi^{\#}(\omega)$ extends to an  deformation of $(X,\pi)$
over $\Spec(\mathbb{C}[[t]])$.
\end{theorem}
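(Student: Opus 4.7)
The plan is to mimic verbatim the argument just given for Theorem~\ref{thm.defcoisotropic}, only replacing the homotopy fiber $\Tot(\sJ^{\ge 1}_Z(\sU)_\bullet)$ and its target $\Tot(\sK^{\ge 1}_Z(\sU)_\bullet)$ by, respectively, $\Tot(\Omega^{\ge 1}_X[1](\sU)_\bullet)$ and $\Tot(\bigwedge^{\ge 1}\Theta_X[1](\sU)_\bullet)$, and using the right-hand vertical anchor map in the commutative diagram of DGLAs preceding the statement. Fix an open Stein covering $\sU=\{U_i\}$ of $X$.

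First I would invoke the second lemma above (the $X$-version of the Hodge-to-de-Rham degeneration input): under the hypothesis that the Hodge to de Rham spectral sequence of $X$ degenerates at $E_1$, the DGLA $\Tot(\Omega^{\ge 1}_X[1](\sU)_\bullet)$ is homotopy abelian. By Theorem~\ref{thm.homotopyinvariance} this implies that the associated deformation functor $\Def_{\Tot(\Omega^{\ge 1}_X[1](\sU)_\bullet)}$ is unobstructed; in particular every first-order element lifts to a formal deformation over $\Spec(\C[[t]])$.

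Next, the anchor map gives a morphism of DGLAs $\pi^{\#}\colon \Tot(\Omega^{\ge 1}_X[1](\sU)_\bullet)\to \Tot(\bigwedge^{\ge 1}\Theta_X[1](\sU)_\bullet)$ (this uses Remark~\ref{rem.ancoramorfismoDGLA} applied componentwise and the naturality of $\Tot$). By Theorem~\ref{thm.pdef} the target governs the deformation functor $\Def_{(X,\pi)}$, hence we obtain an induced morphism of deformation functors $\pi^{\#}\colon \Def_{\Tot(\Omega^{\ge 1}_X[1](\sU)_\bullet)}\to \Def_{(X,\pi)}$ whose differential at the origin is the anchor map in cohomology $H^1(\Tot(\Omega^{\ge 1}_X[1](\sU)_\bullet))\to T^1\Def_{(X,\pi)}$.

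Finally I would identify the source: by Example~\ref{ex.coomologiadelTotversusHypercohomology} and the Whitney quasi-isomorphism $I$, one has $H^1(\Tot(\Omega^{\ge 1}_X[1](\sU)_\bullet))\cong \H^2(X,\Omega^{\ge 1}_X)$, and the $E_1$-degeneration of the Hodge to de Rham spectral sequence of $X$ yields the splitting $\H^2(X,\Omega^{\ge 1}_X)\cong H^0(X,\Omega^2_X)\oplus H^1(X,\Omega^1_X)$. Thus any $\omega$ in this group is the Kodaira--Spencer class of a formal smooth deformation in $\Def_{\Tot(\Omega^{\ge 1}_X[1](\sU)_\bullet)}$ by the unobstructedness above, and its image under $\pi^{\#}$ is the desired extension of $\pi^{\#}(\omega)$ over $\Spec(\C[[t]])$. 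There is no real obstacle here beyond setting up the identifications; the only step where one must be a bit careful is checking that the first-order map coincides, under the isomorphism $\H^2(X,\Omega^{\ge 1}_X)\cong H^0(X,\Omega^2_X)\oplus H^1(X,\Omega^1_X)$, with the anchor acting componentwise in sheaf cohomology, but this is immediate from the compatibility of $\pi^{\#}$ with the $\check{\mathrm{C}}$ech differential and the formality of the Whitney integration map.
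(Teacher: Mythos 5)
Your proposal is correct and follows essentially the same route as the paper: the paper's proof consists precisely of the observation that $\Tot(\Omega^{\geq 1}_{X}[1](\sU)_{\bullet})$ is homotopy abelian (via the degeneration lemma), combined with the anchor-map morphism of DGLAs and the cohomological identifications already set up in the surrounding text. Your write-up just makes explicit the steps the paper compresses into ``As above''.
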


\begin{proof} As above,  for every open Stein covering $\sU$ the DGLA $\Tot(\Omega^{\geq 1}_{X}[1](\sU)_{\bullet})$ is homotopy abelian.
\end{proof}

The above theorem \ref{thm.defpoisson} has been proved is a different way by Hitchin \cite{hitchin} under the additional assumption that $X$ is compact K\"{a}hler. 
As a further application  we can generalize to coisotropic submanifolds part of
classical results by McLean and Voisin about deformation of
Lagrangian submanifolds \cite{mclean,voisin}.

\begin{corollary}\label{cor.unobstructness}
Let $Z$ be a compact coisotropic submanifold of a holomorphic Poisson manifold $(X,\pi)$.
Assume that the Hodge to de Rham spectral sequence of $Z$  degenerates at $E_1$ and the anchor map
\[ \pi^{\#}\colon H^0(Z,\Omega^1_Z)\to H^0(Z,\sN_{Z|X})\]
is surjective.
Then every small embedded deformation of $Z$ is coisotropic and the Hilbert functor
$\Hilb_{Z|X}=\Hilb^{co}_{Z|X}$ is
unobstructed.
\end{corollary}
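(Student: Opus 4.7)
The plan is to promote the first order surjectivity hypothesis to a statement about the full formal deformation functors, by combining the homotopy abelianity of
\(J:=\Tot(\sJ^{\geq 1}_Z(\sU)_\bullet)\) (granted by the preceding lemma under the assumed $E_1$-degeneration of the Hodge to de Rham spectral sequence of $Z$) with a standard smoothness criterion for morphisms of functors of Artin rings.

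First I would consider the composition of formal functors
\[
\eta\colon \Def_J \xrightarrow{\;\pi^{\#}\;} \Hilb^{co}_{Z|X} \hookrightarrow \Hilb_{Z|X},
\]
where the first arrow comes from the DGLA morphism $\pi^{\#}\colon \Tot(\sJ^{\geq 1}_Z(\sU)_\bullet)\to \Tot(\sK^{\geq 1}_Z(\sU)_\bullet)$ of the commutative diagram above together with the identification $\Def_{\Tot(\sK^{\geq 1}_Z(\sU)_\bullet)} \simeq \Hilb^{co}_{Z|X}$ of Theorem~\ref{thm.deformazionicoisotrope}, and the second arrow is the tautological inclusion of subfunctors. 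Using the quasi-isomorphism $\sJ^{\geq 1}_Z\simeq \Omega^{\geq 1}_Z$ (from the cone description of the homotopy fiber) together with Hodge degeneration, one identifies $T^1\Def_J = H^1(J) = \H^1(Z,\Omega^{\ge 1}_Z) \simeq H^0(Z,\Omega^1_Z)$; combined with $T^1\Hilb_{Z|X} = H^0(Z,\sN_{Z|X})$ and the naturality of the anchor map, one then reads off that $T^1\eta$ coincides with $\pi^{\#}\colon H^0(Z,\Omega^1_Z)\to H^0(Z,\sN_{Z|X})$, which is surjective by hypothesis.

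Next I would invoke the following elementary smoothness criterion: if $f\colon F\to G$ is a morphism of functors of Artin rings satisfying Schlessinger's conditions, $F$ is smooth and $T^1 f$ is surjective, then $f$ is surjective on $A$-points for every $A \in \Art_{\C}$ and $G$ is smooth. The verification is an induction on the length of $A$: for a small extension $A'\twoheadrightarrow A$ with square-zero kernel $I$ and $\xi' \in G(A')$, first pick $\eta \in F(A)$ with $f(\eta)=\xi'|_A$ (inductive hypothesis), then lift $\eta$ to some $\eta_0 \in F(A')$ by smoothness of $F$, and finally correct $\eta_0$ by an element of $T^1 F \otimes I$ obtained by lifting, via surjectivity of $T^1 f$, the $T^1 G \otimes I$-discrepancy between $f(\eta_0)$ and $\xi'$ on the fiber of $G(A')\to G(A)$. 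Applied to $\eta$, with $\Def_J$ smooth because $J$ is homotopy abelian, this yields simultaneously the smoothness of $\Hilb_{Z|X}$ and the surjectivity of $\Def_J(A)\to \Hilb_{Z|X}(A)$ for every $A\in\Art_\C$.

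Finally, since $\eta$ factors through $\Hilb^{co}_{Z|X}$, the image of $\Def_J(A)$ in $\Hilb_{Z|X}(A)$ is automatically contained in $\Hilb^{co}_{Z|X}(A)$, and the surjectivity just obtained upgrades the inclusion $\Hilb^{co}_{Z|X}(A)\subseteq \Hilb_{Z|X}(A)$ to an equality for every $A$. This proves at once that every small embedded deformation of $Z$ in $X$ is coisotropic and that $\Hilb_{Z|X}=\Hilb^{co}_{Z|X}$ is unobstructed. The only delicate bookkeeping I foresee is the precise identification of $T^1\eta$ with the anchor map, which requires comparing the hypercohomology spectral sequences for $\sJ^{\geq 1}_Z$ and $\sK^{\geq 1}_Z$ and tracing the Hodge degeneration isomorphism through these identifications.
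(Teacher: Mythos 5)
Your proof is correct and follows essentially the same route as the paper's own (very terse) argument, which likewise rests on the homotopy abelianity of $\Tot(\sJ^{\geq 1}_{Z}(\sU)_{\bullet})$ under Hodge degeneration (Theorem~\ref{thm.defcoisotropic}) combined with the standard smoothness criterion for the composite $\Def_{\Tot(\sJ^{\geq 1}_{Z}(\sU)_{\bullet})}\to\Hilb^{co}_{Z|X}\hookrightarrow\Hilb_{Z|X}$ that you spell out in detail. The only ingredient you omit is the paper's opening reduction, via compactness of $Z$, the relative Douady space and Artin's theorem as in Corollary~\ref{cor.stabilitycoisotropic}, from genuine small analytic deformations to the infinitesimal functors of Artin rings --- this is what justifies the phrase ``every small embedded deformation of $Z$ is coisotropic'' beyond the formal level, while the formal statement you prove is complete.
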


\begin{proof} Since $Z$ is compact, by the argument used in
Corollary~\ref{cor.stabilitycoisotropic} it is sufficient to
consider infinitesimal deformations. It is now sufficient to apply Theorem~\ref{thm.defcoisotropic}.
\end{proof}

Obviously the above corollary fails without the assumption about the anchor
map. For instance, if  $Z=p$ is a point, then $Z$ is coisotropic if and only $\pi$ vanishes
at $p$; this shows that in general $\Hilb^{co}_{Z|X}$ is obstructed and strictly contained
in  $\Hilb_{Z|X}$. Corollary~\ref{cor.unobstructness} holds in particular for Lagrangian submanifolds of a holomorphic symplectic manifold;  a different proof of this case, 
based on Ran-Kawamata's $T^1$-lifting theorem, is given in \cite{KS13}.

\bigskip
\section{Dolbeault resolutions}
\label{sec.dolbeault}

In the previous sections we described the differential graded Lie algebras
controlling Poisson deformations and embedded coisotropic deformations using some purely
algebraic constructions, namely the Thom-Whitney-Sullivan totalization. Therefore all the above results can be easily extended to every algebraic Poisson manifold defined over a field of characteristic 0: 
roughly speaking, it is
sufficient to replace holomorphic with algebraic and Stein with affine and everything still works.

In this section we shall use Dolbeault's resolutions in order to give
another description of the DGLA governing embedded
coisotropic deformations; clearly this new
interpretation only work in the complex analytic setting.

Given a locally free sheaf $\sE$ on a complex manifold $X$ we shall denote by
$\sA_X^{0,j}(\sE)$ the sheaf of differentiable forms of type $(0,j)$ with values in $\sE$; the Dolbeault differential $\debar_{\sE}\colon \sA_X^{0,j}(\sE)\to \sA_X^{0,j+1}(\sE)$ is defined as the linear extension of
\[ \debar_{\sE}(\phi\otimes e)=\debar\phi\otimes e,\qquad \phi\in \sA_X^{0,j},\quad e\in \sE.\]

The Dolbeault resolution of  a bounded below  complex
\[(\sE^*,\delta)\colon\qquad  0\to \sE^i\mapor{\delta}\sE^{i+1}\mapor{\delta}\cdots\]
of locally free sheaves on a complex manifold is the  sheaf  of DG-vector spaces
$\sA_X^{0,*}(\sE^*)$, where
\[ \sA_X^{0,*}(\sE)^i=\bigoplus_{j+h=i}\sA_X^{0,j}(\sE^h),\]
and the differential $\debar_{\sE^*}$ is defined by the formula
\[ \debar_{\sE^*}\colon \sA_X^{0,j}(\sE^h)\to \sA_X^{0,j+1}(\sE^h)\oplus \sA_X^{0,j}(\sE^{h+1}),\qquad
\debar_{\sE^*}(\phi\otimes e)=\debar\phi\otimes e+(-1)^j\phi\otimes \delta e\;.\]
According to Dolbeault's lemma, the natural inclusion
$\sE^*\to \sA_X^{0,*}(\sE^*)$ is a quasi-isomorphism.

Similarly we  denote by $A_X^{0,*}(\sE^*)$
the DG-vector space  of global sections of the Dolbeault resolution; more generally, for every open subset $U\subset X$ we shall denote by $A_U^{0,*}(\sE^*)$ the DG-vector space of sections 
of $\sA_X^{0,*}(\sE^*)$ over $U$. Notice that, by Dolbeault theorem,  
the cohomology 
$A_X^{0,*}(\sE^*)$ is isomorphic to the 
hypercohomology of $\sE^*$.

Let $(\sE^*,\delta)$ be a bounded below complex of locally free sheaves on a complex
manifold $X$ and let $\sU=\{U_i\}$ be an open Stein covering of $X$. Thus we have a natural morphism of
semicosimplicial DG-vector spaces:
\[ \xymatrix{\sE^*(\sU)_{\bullet}:\ar[d]&
\prod_{i}\sE^*(U_i)\ar@<0.65ex>[r]\ar@<-0.65ex>[r]\ar[d]&
\prod_{i,j}\sE^*(U_{ij})\ar[d]\ar@<1.00ex>[r]\ar[r]\ar@<-1.00ex>[r]&\prod_{i,j,k}\sE^*(U_{ijk})\cdots\ar[d]\\
A_{\sU}^{0,*}(\sE^*)_{\bullet}:&
\prod_{i}A_{U_i}^{0,*}(\sE^*)\ar@<0.65ex>[r]\ar@<-0.65ex>[r]&
\prod_{i,j}A_{U_{ij}}^{0,*}(\sE^*)\ar@<1.00ex>[r]\ar[r]\ar@<-1.00ex>[r]&
\prod_{i,j,k}A_{U_{ijk}}^{0,*}(\sE^*)\cdots&}
\]
Since $A_X^{0,*}(\sE^*)$ is the equalizer of $\de_0,\de_1\colon A_{\sU}^{0,*}(\sE^*)_{0}\to
A_{\sU}^{0,*}(\sE^*)_{1}$ and
every map
\[ \sE^*(U_{i_1\cdots i_k})\to A_{U_{i_1\cdots i_k}}^{0,*}(\sE^*)\]
is a quasi-isomorphism, according to Remark~\ref{rem.equalizzatorecosimpliciale} 
there exists a diagram of quasi-isomorphisms
\[\xymatrix{&\Tot(\sE^*(\sU)_{\bullet})\ar[d]\\
A_X^{0,*}(\sE^*)\ar[r]^{e\quad}&\Tot(A_{\sU}^{0,*}(\sE^*)_{\bullet})},\qquad
e(x)=(1\otimes x,1\otimes \de_0 x,1\otimes \de_0^2 x,\ldots).\]

Here we apply the above general construction in two particular cases, both of them  
related with a coisotropic submanifold $Z$ of a  holomorphic Poisson manifold
$(X,\pi)$. 
In the first case we consider the complex of locally free sheaves on $X$
\[ {\bigwedge}^{\ge 1}\Theta_X[1]:\qquad 0\to \Theta_X\xrightarrow{d_{\pi}}{\bigwedge}^2\Theta_X\xrightarrow{d_{\pi}}
{\bigwedge}^3\Theta_X\cdots\;,\]
while in the second we consider the complex of locally free sheaves on $Z$
\[ {\bigwedge}^{\ge 1}\sN_{Z|X}[1]:\qquad 0\to
\sN_{Z|X}\xrightarrow{d_{\pi}}{\bigwedge}^2\sN_{Z|X}\xrightarrow{d_{\pi}}{\bigwedge}^3\sN_{Z|X}\cdots\;.\]
The complex $A_X^{0,*}({\bigwedge}^{\ge 1}\Theta_X[1])$ admits
a natural structure of differential graded Lie algebras, where the bracket is the
antiholomorphic extension of the Schouten-Nijenhuis bracket on
$\sA_X^{0,0}({\bigwedge}^{\ge 1}\Theta_X[1])$.

It is straightforward to check that there exists a short exact
sequence
\begin{eqnarray}\label{sequencealgebroid}
0\to L_{Z|X}\xrightarrow{\;\chi\;} A_X^{0,*}({\bigwedge}^{\ge 1}\Theta_X[1])
\xrightarrow{\;P\;}A_Z^{0,*}({\bigwedge}^{\ge 1}\sN_{Z|X}[1])\to 0,\end{eqnarray}
where $P$ is the natural projection map,  $L_{Z|X}$ is a differential graded subalgebra of
$A_X^{0,*}({\bigwedge}^{\ge 1}\Theta_X[1])$ and $\chi$ is the inclusion: in fact the proof of Proposition~\ref{prop.cois} implies that $L_{Z|X}\subset A_X^{0,*}({\bigwedge}^{\ge 1}\Theta_X[1])$ is a 
differential graded Lie subalgebra.

\begin{theorem}\label{dolbeaulthomfiber} In the notation of Section~\ref{sec.coisotropicdeformations}, for every open Stein covering $\sU$
of $X$,
the homotopy fiber of
the inclusion $L_{Z|X}\mapor{\chi} A_X^{0,*}({\bigwedge}^{\ge 1}\Theta_X[1])$
is quasi-isomorphic to
$\Tot(\sK^{\geq1}_{Z}(\sU)_{\bullet})$
and then governs the functor of infinitesimal embedded coisotropic deformations of $Z$ in $X$.
\end{theorem}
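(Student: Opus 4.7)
The plan is to reduce the comparison to a double application of totalization, bridging the Dolbeault-global-section picture and the \v{C}ech-semicosimplicial picture through a common auxiliary construction. First, since the homotopy fiber construction commutes with $\Tot$ of semicosimplicial DGLAs (see the first remark after Example~\ref{ex.deligneperfibraootopica}), one has
\[
\Tot(\sK^{\geq1}_{Z}(\sU)_{\bullet})\cong K\!\left(\Tot(\sL^{\geq1}_{Z}[1](\sU)_{\bullet})\to\Tot({\bigwedge}^{\geq1}\Theta_{X}[1](\sU)_{\bullet})\right),
\]
so it suffices to produce a zigzag of DGLA quasi-isomorphisms from $K(\chi)$ to the right-hand side.

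To this end, I introduce the Dolbeault-\v{C}ech auxiliary semicosimplicial DGLAs $A^{0,*}_{\sU}({\bigwedge}^{\geq1}\Theta_{X}[1])_{\bullet}$, whose $n$-th level is $\prod_{|I|=n+1}A^{0,*}_{U_{I}}({\bigwedge}^{\geq1}\Theta_{X}[1])$, and $L_{Z|X}(\sU)_{\bullet}$, defined termwise as the kernel of the projection to $A^{0,*}_{U_{I}\cap Z}({\bigwedge}^{\geq1}\sN_{Z|X}[1])$, both equipped with the standard \v{C}ech face operators. The strategy is then to fit everything into the commutative diagram
\[
\xymatrix{
L_{Z|X}\ar[r]^-{\chi}\ar[d]&A_X^{0,*}({\bigwedge}^{\geq1}\Theta_X[1])\ar[d]\\
\Tot(L_{Z|X}(\sU)_{\bullet})\ar[r]&\Tot(A^{0,*}_{\sU}({\bigwedge}^{\geq1}\Theta_{X}[1])_{\bullet})\\
\Tot(\sL^{\geq1}_{Z}[1](\sU)_{\bullet})\ar[r]\ar[u]&\Tot({\bigwedge}^{\geq1}\Theta_{X}[1](\sU)_{\bullet})\ar[u]
}
\]
and verify that all four vertical arrows are quasi-isomorphisms of DGLAs; the conclusion follows from the homotopy invariance of the homotopy fiber together with the first step.

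For the bottom vertical pair, over each Stein open $U_{I}$ the holomorphic-to-Dolbeault inclusion ${\bigwedge}^{\geq1}\Theta_{X}[1](U_{I})\hookrightarrow A^{0,*}_{U_{I}}({\bigwedge}^{\geq1}\Theta_{X}[1])$ is a quasi-isomorphism by Dolbeault's lemma and Cartan's Theorem~B, and similarly on $U_{I}\cap Z$ for ${\bigwedge}^{\geq1}\sN_{Z|X}[1]$. Comparing the holomorphic and Dolbeault short exact sequences associated to the inclusion $\sL^{\geq1}_{Z}[1]\hookrightarrow{\bigwedge}^{\geq1}\Theta_{X}[1]$ and applying the five-lemma, the induced map on kernels $\sL^{\geq1}_{Z}[1](U_{I})\hookrightarrow L_{Z|X}(\sU)_{n}$ is a termwise quasi-isomorphism, and then $\Tot$ promotes this to the desired global quasi-isomorphism. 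For the top pair, the sheaf $\sA^{0,*}_{X}({\bigwedge}^{\geq1}\Theta_{X}[1])$ and the kernel sheaf $\sL_{Z|X}$ on $X$ are both fine, being modules over $\sA^{0,*}_{X}$, so their \v{C}ech cohomology vanishes in positive degree; Remark~\ref{rem.equalizzatorecosimpliciale} combined with the Whitney quasi-isomorphism $I$ then implies that the ``$e$'' maps from global sections into the totalizations are quasi-isomorphisms.

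The hard part will be the five-lemma step, since $\sL^{\geq1}_{Z}$ is not necessarily locally free and its Dolbeault resolution is not \emph{a priori} defined as an $\sA^{0,*}_X$-tensor product; but this is precisely why $L_{Z|X}$ is defined directly as a kernel, and the five-lemma goes through because the outer terms of both short exact sequences are locally free complexes whose Dolbeault sections on each Stein $U_{I}$ are quasi-isomorphic to their holomorphic sections. The fineness verification for $\sL_{Z|X}$ is immediate from its stability under multiplication by $\sA^{0,0}_{X}$, and the remaining step is a straightforward bookkeeping in the functoriality of homotopy fibers under quasi-isomorphisms.
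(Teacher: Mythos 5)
Your proposal is correct and follows essentially the same route as the paper: reduce $\Tot(\sK^{\geq1}_{Z}(\sU)_{\bullet})$ to the homotopy fiber of the map of totalizations, interpolate via the Dolbeault--\v{C}ech semicosimplicial DGLA with the termwise-kernel column, and conclude by homotopy invariance of homotopy fibers once all vertical arrows in the comparison diagram are quasi-isomorphisms. The paper presents this as a single $3\times3$ diagram with exact rows and leaves the five-lemma and fineness verifications implicit (relying on the Section~\ref{sec.dolbeault} discussion of the maps $e$ and $I$), whereas you spell them out; the mathematical content is the same.
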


\begin{proof} Since $\Tot$ is an exact functor we have a short exact sequence
\[ 0\to \Tot(\sL_{Z}^{\geq1}[1](\sU)_{\bullet})\mapor{\alpha}\Tot({\bigwedge}^{\geq1}\Theta_{X}[1](\sU)_{\bullet})
\to \Tot({\bigwedge}^{\geq1}\sN_{Z|X}[1](\sU)_{\bullet})\to 0\]
and  $\Tot(\sK^{\geq1}_{Z}(\sU)_{\bullet})$ is isomorphic to the homotopy fiber of $\alpha$.
The above sequence is part of a $3\times 3$  diagram with exact rows,
with the first two columns  made by morphisms of differential graded Lie algebras and
where every vertical map   is a quasi-isomorphism:
\[ \xymatrix{0\ar[r]&\Tot(\sL_{Z}^{\geq1}[1](\sU)_{\bullet})\ar[r]^{\alpha}\ar[d]&
\Tot({\bigwedge}^{\geq1}\Theta_{X}[1](\sU)_{\bullet})\ar[r]\ar[d]&
\Tot({\bigwedge}^{\geq1}\sN_{Z|X}[1](\sU)_{\bullet})\ar[r]\ar[d]&0\\
0\ar[r]&K\ar[r]&
\Tot(A_{\sU}^{0,*}({\bigwedge}^{\geq1}\Theta_{X}[1])_{\bullet})\ar[r]&
\Tot(A_{\sU}^{0,*}({\bigwedge}^{\geq1}\sN_{Z|X}[1])_{\bullet})\ar[r]&0\\
0\ar[r]&L_{Z|X}\ar[r]^{\chi}\ar[u]& A_X^{0,*}({\bigwedge}^{\ge 1}\Theta_X[1])\ar[r]\ar[u]_e&
A_Z^{0,*}({\bigwedge}^{\ge 1}\sN_{Z|X}[1])\ar[r]\ar[u]_e& 0}\]
This diagram induces a quasi-isomorphism between the homotopy fibers of $\alpha$ and $\chi$.
\end{proof}

\bigskip
\section{Relation with the homotopy Lie algebroid}

Coisotropic deformations have been studied, in the differentiable setting, by
using $L_{\infty}$-algebras together with
Voronov's construction of higher derived brackets
\cite{CF07,CS08,FZ12,OP05,voronov}.

\begin{theorem}[Th. Voronov \cite{voronov2}]\label{thm.Voronov}
Let $(M, [\cdot,\cdot])$ be a graded Lie algebra, splitting, as a graded vector space, in the direct sum  $M=L\oplus A$,
where $L,A$ are graded  Lie subalgebras of $M$ and $A$ is abelian;  denote by $P\colon M\rightarrow A$ the projection with kernel $L$. For every derivation 
$D\in\Der^*(M)$ such that $D(L)\subset L$, its higher derived bracket are defined as
\begin{eqnarray}\label{derivedbrackets}\qquad
\{\cdots\}^n_D\colon A^{\odot n}\rightarrow A,\qquad \{a_{1},\ldots,a_{n}\}^n_D=
P[[\cdots[[Da_1,a_2],a_3],\ldots\;],a_n]\,,\qquad n\geq1,
\end{eqnarray}

Then, for every degree one derivation $D\in\Der^1(M)$ such that $D^2=0$ and $D(L)\subset L$,
the higher derived brackets of $D$
give a structure of $L_{\infty}$-algebra on $A[-1]$.
\end{theorem}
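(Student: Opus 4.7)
The plan is to prove this via the standard coalgebra formalism for $L_\infty$-algebras, already recalled in the proof of Theorem~\ref{thm.criterion}. An $L_\infty[1]$-structure on a graded vector space $V$ is the same as a degree $+1$ coderivation $Q$ on the reduced symmetric coalgebra $\bar{S}^c(V)$ with $Q^2=0$. Setting $V=A[-1]$, the higher derived brackets \eqref{derivedbrackets} correspond (via d\'ecalage and corestriction) to the Taylor coefficients of a candidate coderivation $Q$, and the $L_\infty$-relations reduce to the single identity $[Q,Q]_{NR}=0$ in $\Coder^*(\bar{S}^c(V),\bar{S}^c(V))$.

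First I would record the symmetry input. Because $A$ is abelian, for every $a,b\in A$ and every $m\in M$ the graded Jacobi identity gives
\[
[[m,a],b]=(-1)^{|a||b|}[[m,b],a]+[m,[a,b]]=(-1)^{|a||b|}[[m,b],a],
\]
so the operators $\ad_a$ and $\ad_b$ graded-commute for $a,b\in A$. Iterating, the expression $[\cdots[[Da_1,a_2],a_3],\ldots,a_n]$ is graded symmetric in $a_2,\ldots,a_n$; after the degree shift $V=A[-1]$ one checks that the additional symmetry trading $a_1$ with any $a_j$ also holds, essentially because $D^2=0$ combined with $[a_1,a_j]=0$ forces $[Da_1,a_j]=\pm[Da_j,a_1]$ (after projection by $P$). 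Hence the candidate structure maps $q_n\colon V^{\odot n}\to V$ descending from \eqref{derivedbrackets} are well defined.

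Next I would introduce the auxiliary "universal enveloping" map
\[
\Phi\colon\bar{S}^c(V)\longrightarrow M,\qquad \Phi(v_1\odot\cdots\odot v_n)=[\cdots[[v_1,v_2],v_3],\ldots,v_n]
\]
(again well-defined by the abelianness argument above). The key identity I would then establish is that, as maps from $\bar{S}^c(V)$ to $M$,
\[
\Phi\circ Q=D\circ\Phi\;\bmod L,
\]
i.e., $P\circ D\circ\Phi=P\circ\Phi\circ Q$, which is exactly the statement that the $q_n$ are the corestrictions of a coderivation realized through $\Phi$. The assumption $D(L)\subset L$ enters precisely here: it lets one replace the inner projections $P$ appearing in iterated $q_n$'s by the identity, since $PD(1-P)=0$ on the images of the iterated brackets.

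The main computation is $Q^2=0$. Expanding $Q^2(v_1\odot\cdots\odot v_n)$ produces a sum over partitions indexed by "inner" and "outer" applications of $D$. Using the displayed intertwining, this sum equals $P\circ \Phi$ applied to a sum of iterated brackets of the shape $[\cdots[D^2 v_{i_1},v_{i_2}],\ldots,v_{i_n}]$ plus cross terms of the form $[\cdots[[Dv_{i_1},Dv_{i_j}],\ldots]$; the first family vanishes by $D^2=0$, and the cross terms cancel in pairs after applying graded Jacobi together with the abelianness of $A$. The hard part — and the main obstacle — is the bookkeeping of Koszul signs and the combinatorial verification that these cancellations exhaust all terms. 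The cleanest way to bypass the explicit signs is to phrase everything through the Nijenhuis--Richardson bracket used in Theorem~\ref{thm.criterion}: one lifts $D$ to a coderivation $\tilde D$ of a larger coalgebra built from $M$, observes that $\Phi$ is a morphism of coalgebras intertwining $\tilde D$ with $Q$ modulo the coideal generated by $L$, and concludes $[Q,Q]_{NR}=0$ from $[\tilde D,\tilde D]=2\tilde D^2=0$.
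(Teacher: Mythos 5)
First, a point of comparison: the paper offers no proof of this statement --- it is quoted directly from Voronov \cite{voronov2} --- so your attempt can only be measured against the actual content of the theorem. The parts of your proposal that do work are the reduction to $Q^2=0$ in the coalgebra picture and the graded symmetry of the brackets; note, however, that the symmetry needs only the Leibniz rule and the abelianness of $A$ (exactly as in the paper's displayed computation of $\{a,b\}^2_D$), and your appeal to $D^2=0$ there is spurious.

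The central step has a concrete failure. Your auxiliary map $\Phi(v_1\odot\cdots\odot v_n)=[\cdots[[v_1,v_2],v_3],\ldots,v_n]$ is identically zero for $n\ge 2$: the $v_i$ lie in $A$ and $A$ is abelian, so already $[v_1,v_2]=0$. Hence the ``key identity'' $P\circ D\circ\Phi=P\circ\Phi\circ Q$ cannot encode the brackets (its left-hand side vanishes in arity $\ge 2$, while its right-hand side is essentially $q_n$), and the strategy collapses at the outset. Even after repairing $\Phi$ by inserting a $D$, the justification that $D(L)\subset L$ ``lets one replace the inner projections by the identity since $PD(1-P)=0$'' is insufficient: in the compositions $q_l(q_k(\cdots),\cdots)$ entering the $L_\infty$ relations, the inner $P$ is followed by $D$ \emph{and then by further brackets with elements of $A$} before the outer $P$ is applied, so one must control terms of the form $P[[\cdots[D((1-P)x),a],\cdots],a']$, which the operator identity $PD(1-P)=0$ does not kill (it only handles the arity-one relation $PDPD=PD^2=0$). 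This is precisely where the hypothesis that $L$ is a subalgebra carries the combinatorial weight of Voronov's theorem. Your closing paragraph acknowledges the difficulty but replaces it with an unspecified ``larger coalgebra'' and a map $\Phi$ claimed to be a coalgebra morphism while landing in $M$, which is not an argument. A complete proof must either establish Voronov's identity expressing the $n$-th Jacobiator of the derived brackets of $D$ as the $n$-th derived bracket of $D^2$, or reduce to the inner-derivation case by adjoining a formal odd element $\xi$ with $[\xi,\cdot]=D$ and $[\xi,\xi]=0$ and enlarging $L$ to $L\oplus\K\xi$ --- which is how \cite{voronov2} deduces the derivation case from \cite{voronov}.
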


Notice that $\{a\}^1_D=PDa$ and the 
graded symmetry follows easily from Leibniz rule and the abelianity of $A$: for instance
\[ \{a,b\}^2_D=P[Da,b]=P(D[a,b]-(-1)^{\bar{a}\bar{D}}[a,Db])=
(-1)^{\bar{a}(\bar{D}+\bar{Db})}P[Db,a]=(-1)^{\bar{a}\;\bar{b}}\{b,a\}^2_D\,.\]
If $D(A)\subset A$, then $\{a\}^1_D=a$ and $\{\cdots\}^n_D=0$ for every $n>1$.

The link between higher derived brackets and homotopy fibers is given by the 
following result: the reader can find a proof in the paper \cite{derived}.

\begin{theorem}\label{thm.derivedvshomfiber}
In the same setup of  the above Theorem~\ref{thm.Voronov}, if $D$ has degree 1 and $D^2=0$, then
the $L_{\infty}$-algebra $(A[-1],\{\cdot\}^1_D,\{\cdot,\cdot\}^2_D,\ldots)$ is weak equivalent to the homotopy fiber of the inclusion of DGLAs
\[(L,D,[\cdot,\cdot])\hookrightarrow(M,D,[\cdot,\cdot])\,.\] 
\end{theorem}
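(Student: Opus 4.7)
The plan is to exhibit an explicit $L_\infty$-quasi-isomorphism between $(A[-1],\{\cdots\}^\bullet_D)$ and $K(\chi)$. I begin by identifying the underlying complexes. Voronov's unary bracket $\{\cdot\}^1_D = PD$ squares to zero since, from $D^2=0$, one obtains $DPDa = -D(1-P)Da \in L$, so $PDPDa = 0$. The decomposition $M = L\oplus A$ combined with $D(L)\subset L$ then produces a short exact sequence of complexes
\[
0 \longrightarrow (L,D) \xrightarrow{\;\iota\;} (M,D) \xrightarrow{\;P\;} (A, PD) \longrightarrow 0,
\]
whose associated long exact sequence matches the one coming from the homotopy fibration $K(\chi) \to L \xrightarrow{\chi} M$, identifying $H^n(K(\chi)) \cong H^{n-1}(A, PD) = H^n(A[-1])$. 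This isomorphism is realized concretely by the Whitney integration map $\int_0^1 \colon K(\chi) \to \coker(\chi)[-1]\cong A[-1]$ of the lemma in Section~\ref{sec.Deligne and totalization}, so at the level of underlying chain complexes the two objects are already quasi-isomorphic.

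To promote this to an $L_\infty$-quasi-isomorphism, I would apply the homotopy transfer theorem. The standard contraction of $\K[t,dt]$ onto $\K$, with homotopy $h(p(t) + q(t)\,dt) = \int_0^t q(s)\,ds - t\int_0^1 q(s)\,ds$, tensors with $M$ and restricts to a contraction of $K(\chi)$ onto a subcomplex identified with $A[-1]$. A natural section is
\[
\sigma(a) = \bigl((1-P)Da,\; t(1-P)Da + a\cdot dt\bigr) \in K(\chi),
\]
well-defined since $(1-P)Da\in L$, and satisfying $\pi\sigma = \operatorname{id}$ for the retraction $\pi(l, p(t)+q(t)dt) = P\int_0^1 q(t)\,dt$. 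Homotopy transfer along this contraction endows $A[-1]$ with an $L_\infty$-structure together with an explicit $L_\infty$-quasi-isomorphism to $K(\chi)$.

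The core of the proof is then to identify this transferred structure with Voronov's higher derived brackets $\{\cdots\}^\bullet_D$. The transferred $n$-ary operation is a signed sum over rooted binary trees: leaves are decorated by $\sigma$, internal edges by $h$, internal vertices by the DGLA bracket of $K(\chi)$, and the root by $\pi$. Because $A$ is an abelian subalgebra of $M$, brackets $[a_i, a_j]$ between two leaves contribute only through their $L$-part (which is zero), so only trees of caterpillar shape
\[
\bigl(\cdots\bigl((Da_1, a_2), a_3\bigr), \ldots, a_n\bigr)
\]
survive; a direct sign-and-coefficient check identifies their contribution with the projection $P[[\cdots[Da_1, a_2],\ldots],a_n] = \{a_1,\ldots,a_n\}^n_D$.

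The main obstacle will be this last step, namely the combinatorial bookkeeping of signs and coefficients in the homotopy transfer sum and the verification that non-caterpillar trees actually contribute zero. It is most cleanly addressed in the $L_\infty[1]$-formalism, where the d\'ecalage signs are absorbed into the coalgebra formalism already used in the proof of Theorem~\ref{thm.criterion}, and this is precisely the calculation carried out in the cited reference~\cite{derived}.
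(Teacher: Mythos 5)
The first thing to note is that the paper does not prove Theorem~\ref{thm.derivedvshomfiber} at all: it defers entirely to the external reference \cite{derived}. Your proposal is therefore being compared against a citation rather than an argument, and its chain-level groundwork is correct: the short exact sequence $0\to (L,D)\to (M,D)\to (A,PD)\to 0$, the resulting identification of $H^{n}(K(\chi))$ with $H^{n-1}(A,PD)$ via the integration lemma of Section~\ref{sec.Deligne and totalization}, and the maps $\sigma$ and $\pi$ with $\pi\sigma=\id$ all check out (including the fact that $\sigma$ is a chain map, which uses $DPDa=-D(1-P)Da\in L$). The homotopy transfer strategy is a legitimate route to the statement.

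However, the two steps carrying the actual content are both gapped. (a) The homotopy you write down is Dupont's contraction of $\K[t,dt]$ onto $\langle 1,t,dt\rangle$: computing $\id-dh-hd$ on $K(\chi)$ gives the projection $(l,\,p(t)+q(t)dt)\mapsto \bigl(l,\ tl+dt\textstyle\int_0^1 q\bigr)$, i.e.\ a contraction onto the mapping cocone $L\oplus M[-1]$, \emph{not} onto $\operatorname{im}\sigma\cong A[-1]$; so $dh+hd\neq \id-\sigma\pi$. One must compose with a further contraction killing the acyclic summand $L\oplus L[-1]$, and the transferred brackets depend on that additional choice. (b) The claim that only caterpillar trees survive ``because $A$ is abelian'' is false as stated: the leaves are decorated by $\sigma(a_i)$, whose second component is $t(1-P)Da_i+a_i\,dt$, so $[\sigma a_i,\sigma a_j]$ contains the generally nonzero terms $t^2[(1-P)Da_i,(1-P)Da_j]$ and $t\,dt\,[(1-P)Da_i,a_j]$; abelianity of $A$ only kills the $dt\wedge dt$ term, which vanishes for degree reasons anyway. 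Since identifying the transferred structure with Voronov's brackets \emph{is} the content of the theorem, and your closing sentence delegates exactly this computation to \cite{derived} --- the same reference the paper cites in lieu of a proof --- the proposal is circular as a self-contained argument. (Note also that only a weak equivalence is claimed, so it would suffice to show the transferred structure is $L_\infty$-isomorphic to Voronov's rather than equal to it; but even that weaker identification is not carried out.)
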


In order to apply the higher derived bracket construction to the study of holomorphic coisotropic deformations,
we look for a splitting $A_Z^{0,\ast}(\bigwedge^{\geq1} \sN_{Z|X}[1])\rightarrow A_X^{0,\ast}(\bigwedge^{\geq1} \Theta_{X}[1])$ of the exact sequence~\eqref{sequencealgebroid}, such that the image is an 
abelian graded Lie subalgebra of
$A_X^{0,\ast}(\bigwedge^{\geq1} \Theta_{X}[1])$. In the differentiable setting something similar is accomplished after restricting to a tubular neighborhood of $Z$ in $X$; in the complex analytic setting,
however,  one has to work from the outset in the rather restrictive hypothesis that $X=E$ is the total space of a holomorphic vector bundle
$p\colon E\rightarrow Z$ over $Z$, which is embedded in $E$ as the zero section.

Denoting by $N_{Z|E}$ the normal bundle, there is a canonical identification
$N_{Z|E}\cong E$, in this way the pull-back bundle $p^{\ast}N_{Z|E}\rightarrow E$ is canonically identified with the subbundle $p^{\ast}E\subset TE$ of vertical tangent vectors.
The above induces a morphism $\sN_{Z|E}\rightarrow p_{\ast}\Theta_{E}$ of sheaves on $Z$,
sending a section $\xi$ of $N_{Z|E}$ to the vector field constantly $\xi_{p}$ along the fiber $E_p$; by multiplicative extension we also get $\bigwedge^{\geq1}\sN_{Z|E}\rightarrow p_{\ast}\bigwedge^{\geq1}\Theta_{E}$.
Thus, for every open subset $U\subset Z$ we have a morphism
\[ {\bigwedge}^{\geq1}\sN_{Z|E}[1](U)\rightarrow {\bigwedge}^{\geq1}\Theta_{E}[1](p^{-1}(U))\]
whose image is an abelian graded Lie subalgebra.
Acting via  pull-back on differential forms, we obtain a splitting:
\begin{equation}\label{splitting}
\sigma\colon A_Z^{0,\ast}({\bigwedge}^{\geq1} \sN_{Z|E}[1])\xrightarrow{\qquad}
 A_E^{0,\ast}({\bigwedge}^{\geq1} \Theta_{E}[1])
\end{equation}
of the exact sequence \eqref{sequencealgebroid} with the required properties.

Suppose given a Poisson bivector $\pi$ on $E$ such that $Z\subset E$ is a coisotropic submanifold:
we look at $\pi$ as a section of
$A^{0,0}_E(\bigwedge^2\Theta_E)\subset \left(A^{0,\ast}_E(\bigwedge^{\geq1}\Theta_E[1])\right)^1$,
then it follows from Proposition \ref{prop.cois} that $\pi\in L^1_{Z|E}$.
Since $\pi$ is holomorphic Poisson, putting $D = \debar + d_{\pi}$ we are in the algebraic setup of
Theorem~\ref{thm.Voronov}.

Denote with $P\colon A^{0,\ast}_E(\bigwedge^{\geq1}\Theta_E[1])\rightarrow A^{0,\ast}_Z(\bigwedge^{\geq1}\sN_{Z|E}[1])$
the projection as in the exact sequence~\eqref{sequencealgebroid}, and take
$\sigma:A^{0,\ast}_Z(\bigwedge^{\geq1}\sN_{Z|E}[1])\rightarrow A^{0,\ast}_E(\bigwedge^{\geq1}\Theta_E[1])$
as in \eqref{splitting}. Since the image of $\sigma$ is $\debar$-closed,  the higher derived brackets
$\{\cdots\}^n_D\colon A_Z^{0,\ast}(\bigwedge^{\geq1} \sN_{Z|E}[1])^{\odot n}\rightarrow A_Z^{0,\ast}(\bigwedge^{\geq1} \sN_{Z|E}[1])$  are equal to 
\[ \{\xi\}^1_D=P(\debar\sigma(\xi)+d_{\pi}(\sigma(\xi)))=(\debar+d_{\pi})\xi,\]
\[\{\xi_1,\ldots,\xi_n\}^n_D=P([[\cdots[d_{\pi}(\sigma(\xi_1)),\sigma(\xi_2)]_{SN},\cdots],\sigma(\xi_n)]_{SN}),\qquad
n\ge 2,\]
and define an $L_{\infty}$ structure on  $A_Z^{0,\ast}(\bigwedge^{\geq1} \sN_{Z|E})$.

Thus, according to Theorem~\ref{thm.derivedvshomfiber},
the corresponding $L_{\infty}$ structure on $A_Z^{0,\ast}(\bigwedge^{\geq1} \sN_{Z|E})$
is weak equivalent to the homotopy fiber of
$\chi\colon L_{Z|X}\mapor{} A_X^{0,*}({\bigwedge}^{\ge 1}\Theta_X[1])$; therefore
Theorems~\ref{dolbeaulthomfiber} and \ref{thm.deformazionicoisotrope} immediately imply the following:

\begin{corollary} In the above hypotheses there is an isomorphism of functors of Artin rings
\[\Def_{A_Z^{0,\ast}(\bigwedge^{\geq1} \sN_{Z|E})}\cong\Hilb^{co}_{Z|E}\;.\]
\end{corollary}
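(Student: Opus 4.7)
The plan is to chain together the equivalences already established in the paper, each of which reduces the problem by one step. The key observation is that a weak equivalence of $L_\infty$-algebras induces an isomorphism of the associated deformation functors (the $L_\infty$-version of Theorem~\ref{thm.homotopyinvariance}(1), proved via minimal models, see \cite{K}); therefore it suffices to exhibit a zig-zag of weak equivalences between $A_Z^{0,\ast}(\bigwedge^{\geq1} \sN_{Z|E})$, equipped with the derived bracket $L_\infty$-structure, and a DGLA whose deformation functor is already known to be $\Hilb^{co}_{Z|E}$.

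First I would apply Theorem~\ref{thm.derivedvshomfiber} to the splitting
\[
A_E^{0,\ast}({\bigwedge}^{\geq1}\Theta_{E}[1])=L_{Z|E}\oplus \sigma(A_Z^{0,\ast}({\bigwedge}^{\geq1}\sN_{Z|E}[1]))
\]
together with the square-zero derivation $D=\debar+d_\pi$; the hypotheses of Voronov's theorem are met since $\sigma$ lands in an abelian graded Lie subalgebra (by construction of $\sigma$ via vertically-constant vector fields), since $\pi\in L^1_{Z|E}$ by Proposition~\ref{prop.cois}, and since $d_\pi=[\pi,\cdot]_{SN}$ and $\debar$ both preserve $L_{Z|E}$. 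This identifies, up to weak equivalence of $L_\infty$-algebras, the derived-bracket structure on $A_Z^{0,\ast}(\bigwedge^{\geq1}\sN_{Z|E})$ with the homotopy fiber of the inclusion $\chi\colon L_{Z|E}\hookrightarrow A_E^{0,\ast}(\bigwedge^{\geq1}\Theta_E[1])$.

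Next, fixing any open Stein covering $\sU$ of $E$, Theorem~\ref{dolbeaulthomfiber} provides a quasi-isomorphism between this homotopy fiber and $\Tot(\sK^{\geq1}_{Z}(\sU)_{\bullet})$, and Theorem~\ref{thm.deformazionicoisotrope} then gives an equivalence of formal pointed groupoids $\Del_{\Tot(\sK^{\geq1}_{Z}(\sU)_{\bullet})}\simeq\Hilb^{co}_{Z|E}$. Passing to $\pi_0$ and using homotopy invariance of $\Def$ under quasi-isomorphism/weak equivalence of $L_\infty$-algebras, we obtain the chain of isomorphisms
\[
\Def_{A_Z^{0,\ast}(\bigwedge^{\geq1}\sN_{Z|E})}\;\cong\;\Def_{K(\chi)}\;\cong\;
\Def_{\Tot(\sK^{\geq1}_{Z}(\sU)_{\bullet})}\;\cong\;\Hilb^{co}_{Z|E},
\]
which is the desired statement.

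The only genuinely non-routine point is the first step, the application of Voronov's higher-derived-bracket theorem: one must verify that $\sigma(A_Z^{0,\ast}(\bigwedge^{\geq1}\sN_{Z|E}[1]))$ is indeed an abelian graded Lie subalgebra of $A_E^{0,\ast}(\bigwedge^{\geq1}\Theta_E[1])$ under the Schouten--Nijenhuis bracket. This is where the rigidity of the complex-analytic setting forces the assumption $X=E$: the splitting $\sigma$ relies on the global product structure along the fibers of $p\colon E\to Z$, which makes the pulled-back sections vertically constant and hence pairwise commuting under $[\cdot,\cdot]_{SN}$. Once this is checked, all remaining steps are formal consequences of the previously established theorems and of the homotopy invariance of the Deligne/deformation functor.
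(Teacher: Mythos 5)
Your proposal is correct and follows essentially the same route as the paper: the authors likewise combine Theorem~\ref{thm.derivedvshomfiber} (identifying the derived-bracket $L_{\infty}$-structure on $A_Z^{0,\ast}(\bigwedge^{\geq1}\sN_{Z|E})$ with the homotopy fiber of $\chi$), Theorem~\ref{dolbeaulthomfiber}, Theorem~\ref{thm.deformazionicoisotrope}, and homotopy invariance of $\Def$ under weak equivalence. Your explicit verification of the hypotheses of Voronov's theorem matches the discussion the paper carries out just before stating the corollary.
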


\begin{remark} The $L_{\infty}$-algebra $A_Z^{0,\ast}(\bigwedge^{\geq1} \sN_{Z|E})$
is concentrated in degrees $\geq1$; in particular the elements of
$\Def_{A_Z^{0,\ast}(\bigwedge^{\geq1} \sN_{Z|E})}(A)$ correspond in a bijective way to solutions
$\xi\in A^{0,0}_Z(\sN_{Z|E})\otimes\mathfrak{m}_A$ of the Maurer-Cartan equation:
\[\sum_{n\geq1}\frac{\{\xi,\ldots,\xi\}^n_D}{n!}=0\,.\]
\end{remark}

\end{document}